\documentclass[submission]{dmtcs-episciences}

\usepackage[T1]{fontenc}
\usepackage{amsfonts,amsmath,amssymb,amscd}
\usepackage{floatflt}
\usepackage{pgf}
\usepackage{tikz}
\usetikzlibrary{calc,shapes,positioning,backgrounds,petri,trees,arrows,automata,decorations.pathreplacing}
\tikzstyle{every node}=[ellipse, minimum width=1em, minimum height=1em,inner sep=0pt]
\tikzstyle{accepting}=[tokens=3,accepting by double]
\usepackage{graphicx, subfigure}
\DeclareGraphicsExtensions{.pdf}
\usepackage[round]{natbib}
\usepackage{placeins}

\newcommand{\A}{\mathcal{A}}
\newcommand{\M}{\mathcal{M}}

\DeclareMathOperator{\Imm}{Im}

\newtheorem{theorem}{Theorem}
\newtheorem{lemma}[theorem]{Lemma}
\newtheorem{proposition}[theorem]{Proposition}
\newtheorem{corollary}[theorem]{Corollary}
\newtheorem{definition}[theorem]{Definition}

\begin{document}

\author{Alessandra Cherubini\affiliationmark{1}\begin{NoHyper}\thanks{Email: \texttt{alessandra.cherubini@polimi.it}}\end{NoHyper}
  \and Achille Frigeri\affiliationmark{1}\begin{NoHyper}\thanks{Email: \texttt{achille.frigeri@polimi.it}}\end{NoHyper}
  \and Zuhua Liu\affiliationmark{2}\begin{NoHyper}\thanks{Email: \texttt{liuzuhua@hotmail.com}}\end{NoHyper}}
\title[Composing short 3-compressing words]{Composing short 3-compressing words on a 2 letter alphabet\begin{NoHyper}\thanks{The first two authors are partially supported by PRIN: \lq\lq Automi e linguaggi formali: aspetti matematici e applicativi\rq\rq. The work was completed during the third-named author's visit to Politecnico di Milano which was supported by the CSC (China Scholarship Council), and acknowledges support from National Natural Science Foundation of China, grant No. 11261066.}\end{NoHyper}}
\affiliation{
Dipartimento di Matematica ``F. Brioschi'', Politecnico di Milano\\
Department of Mathematics, Kunming University}
\keywords{deterministic finite automaton, collapsing word, synchronizing word}
\received{1998-10-14}
\revised{\today}
\accepted{tomorrow}

\maketitle

\begin{abstract}
A finite deterministic (semi)automaton $\A =(Q,\Sigma,\delta)$ is $k$-compressible if there is some word $w\in \Sigma^+$ such that the image of its state set $Q$ under the natural action of $w$ is reduced by at least $k$ states.
Such word, if it exists, is called a $k$-compressing word for $\A$.
A word is $k$-collapsing if it is $k$-compressing for each $k$-compressible automaton.
We compute a set $W$ of short words such that each $3$-compressible automata on a two letter alphabet is $3$-compressed at least by a word in $W$.
Then we construct a shortest common superstring of the words in $W$ and, with a further refinement, we obtain a $3$-collapsing word of length $53$.
Moreover, as previously announced, we show that the shortest $3$-synchronizing word is not $3$-collapsing, illustrating the new bounds $34\leq c(2,3)\leq 53$ for the length $c(2,3)$ of the shortest $3$-collapsing word on a two letter alphabet.
\end{abstract}

\section{Introduction}

Let $\A =(Q,\Sigma,\delta)$ be a finite deterministic complete (semi)automaton with state set $Q$, input alphabet $\Sigma$, and transition function $\delta: \; Q \times \Sigma \to Q$.
For any word $w\in \Sigma^+$, the deficiency of $w$ is the difference between the cardinality of $Q$ and the cardinality of the image of $Q$ under the natural action of $w$.
For a fixed $k\geq 1$, the word $w$ is called $k$-\emph{compressing} for $\A$ if its deficiency is greater or equal to $k$.
An automaton $\A$ is $k$-\emph{compressible}, if there exists a $k$-compressing word for $\A$.
A word $w\in \Sigma^+$ is $k$-\emph{collapsing}, if it is $k$-compressing for every $k$-compressible automaton with input alphabet $\Sigma$.
A word $w\in \Sigma^+$ is called \emph{$k$-synchronizing} if it is $k$-compressing for all $k$-compressible automata with $k+1$ states and input alphabet $\Sigma$.
Obviously each $k$-collapsing word is also $k$-synchronizing.

The concept of $k$-collapsing words arose (under a different name) in the beginning of the 1990s with original motivations coming from combinatorics (\cite{SS}) and from abstract algebra (\cite{PSSSV}).
In \cite{SS} it has been proved that $k$-collapsing words always exist, for any $\Sigma$ and any $k \geq 1$, by means of a recursive construction which gives a $k$-collapsing word whose length is $O(2^{2^k})$.
Better bounds for $c(k,t)$ and $s(k,t)$, the length  of the shortest $k$-collapsing and $k$-synchronizing words respectively, on an alphabet of cardinality $t$ were given in \cite{MVP}.
The bounds for $c(2,t)$ were slightly improved in \cite{Pri} and \cite{CKP}, but the gaps from lower and upper bounds are quite large even for small values of $k$ and $t$.
Exact values of $s(k,t)$ and $c(k,t)$ are known for $k=2$ and $t=2,3$ and are quite far from the theoretical upper bounds (\cite{APV}).
Moreover it is known that $s(3,2)=33$ and that the words $s_{3,2}=ab^2aba^3b^2a^2babab^2a^2b^3aba^2ba^2b^2a$ and its dual $\bar{s}_{3,2}$ are the unique shortest synchronizing words on $\{a,b\}$ (\cite{AP}).
Observing that $s(k,t)\leq c(k,t)$, and applying the construction in \cite{MVP}, one gets $33\leq c(3,2)\leq 154$.

The reader is referred to \cite{APV, C, CKP, MVP} for  references and connections to Theoretical Computer Science and Language Theory.
The paper is organized as follows: in Section \ref{background} we introduce some general concepts about $3$-compressible automata and the main tool we use to study them, \emph{i.e.}, 3-Missing State Automata.
In Section \ref{permutation} we give a complete characterization of proper $3$-compressible automata on a two letter alphabet with a letter acting as a permutation, while in Section \ref{nopermutation} we characterize all proper $3$-compressible automata without permutations.
In Section \ref{zot} we describe how to use the previous characterization to obtain a short $3$-collapsing word, improving the known upper bound for $c(3,2)$, as already announced in \cite{dlt2011}.
Section \ref{conclusion} ends the paper with some considerations about the quest for short $3$-collapsing words in general and the relationship between $3$-synchronizing and $3$-collapsing words, and how our analysis can be exploited to obtain more general results, as already done in \cite{kisi2,kisi}.

\section{Background}\label{background}

Let $\A =(Q,\Sigma,\delta)$ be a finite deterministic complete (semi)automaton with state set $Q$, input alphabet $\Sigma=\{a,b\}$, and transition function $\delta: \; Q \times \Sigma \to Q$.
The action of $\Sigma$ on $Q$ given by $\delta$ extends naturally, by composition, to the action of any word $w\in \Sigma^+$ on $q\in Q$; we denote it by $qw=\delta(q,w)$, while the action of $w$ on the entire state set $Q$ is denoted by $Qw=\{qw|q\in Q\}$.

\begin{definition}
The difference $|Q|-|Qw|$ is called the \emph{deficiency} of the word $w$ with respect to $\A$ and denoted by df$_\A(w)$.
For a fixed $k\geq 1$, a word $w \in \Sigma^+$ is called $k$-\emph{compressing} for $\A$, if df$_\A(w) \geq k$.
An automaton $\A$ is $k$-\emph{compressible}, if there exists a $k$-compressing word for $\A$.
A word $w\in \Sigma^+$ is $k$-\emph{collapsing}, if it is $k$-compressing for every $k$-compressible automaton with input alphabet $\Sigma$.
A word $w$ is called \emph{$k$-synchronizing} if it is $k$-compressing for all $k$-compressible automata with $k+1$ states and input alphabet $\Sigma$.
Obviously each $k$-collapsing word is also $k$-synchronizing.
\end{definition}

Actually, we view the automaton $\A$ as a set of \emph{transformations} on $Q$ induced via $\delta$ and labeled by letters of $\Sigma$, rather than as a standard triple.
Indeed, in order to define an automaton, it is enough just to assign to every letter $a\in\Sigma$ the corresponding transformation $\tau_a:q\rightarrow \delta(q,a)$ on $Q$.
Now, for $a\in\Sigma$, we get df$_\A(a)=|Q|-|\Imm(\tau_a)|$, hence df$_\A(a)= 0$ if and only if $\tau_a$ is a permutation on $Q$.
If df$_\A(a)=m \geq 1$, then there are exactly $m$ different states $y_1,y_2,\ldots,y_m \notin \Imm(\alpha)$, and there are some elements of $Q$ whose images under $\tau_a$ are equal.

\begin{definition}
Let $\mathcal{P}=\{\{x_{1_1},\ldots,x_{j_1}\},\ldots,\{x_{1_r},\ldots,x_{j_r}\}\}$ be a partition of $Q$ (where singleton sets are omitted), and $y_1,\ldots,y_m\in Q$, we say that $\tau_a$ is a \emph{transformation of type} \[
[x_{1_1},\ldots,x_{j_1}]\ldots[x_{1_r},\ldots,x_{j_r}]\backslash y_1,\ldots,y_m
\]
if $\mathcal{P}$ is induced by the kernel of $\tau_a$ and the states $y_1,y_2,\ldots,y_m$ do not belong to $\Imm(\tau_a)$.
\end{definition}

For instance, if $\A$ has at least three states denoted by $1$, $2$ and $3$, the transformation $\tau$ is of type $[1,2]\backslash 3$, if and only if $\tau(1)=\tau(2)$, the preimage of $3$ is empty, and for any $q,q'\notin \{1,2\}$, $\tau(q)=\tau(q')$ if and only if $q=q'$.
So, with an abuse of language, we will write $\tau=[1,2]\backslash 3$ (actually $[1,2]\backslash 3$ is a family of transformations).
Then, in the sequel \emph{we will identify each letter of the input alphabet with its corresponding transformation.}

\begin{definition}
Let $a\in\Sigma$, we say that $a$ is a \emph{permutation letter} if it induces a permutation on the set of states, \emph{i.e.}, it has deficiency $0$.
We assume that permutations on $Q$, viewed as elements of the symmetric group $S_n$ with $|Q|=n$, are written in the factorization in disjoint cycles where sometimes also cycles of length 1 are explicitly written.
So we will write $a=(1)(23)\pi$ to denote that (the permutation induces by) $a$ fixes the state 1, swaps states 2 and 3, and $\pi$ is a permutation that does not act over $\{1,2,3\}$ ($\pi$ is not necessarily a cycle).
\end{definition}

The notion of transformation induced by a letter naturally extends to words, and then the semigroup generated by the transformations of $\A$ consists precisely of the transformations corresponding to words in $\Sigma^+$.
If $\mathcal{A}$ is $k$-compressible, at least one letter of its input alphabet has deficiency greater than $0$.
It is well known that each $k$-collapsing word over a fixed alphabet $\Sigma$ is $k$-\emph{full} (\cite{SS}), \emph{i.e.}, contains each word of length $k$ on the alphabet $\Sigma$ among its factors.
Hence, to characterize $k$-collapsing words it is enough
to consider $k$-full words compressing all \emph{proper} $k$-\emph{compressible automata}, \emph{i.e.}, $k$-compressible automata which are not compressed by any word of length $k$.

\begin{proposition}
Let $\mathcal{A}$ be a finite complete automaton on the alphabet $\{a,b\}$: it is $3$-compressible and not proper if at least one letter, say $a$, fulfills one of the following conditions:
\begin{enumerate}
  \item it has deficiency greater than 2;
  \item it has deficiency 2 and is of type $[x,y,z]\backslash u,v$, with $\{u,v\}\nsubseteq\{x,y,z\}$;
  \item it has deficiency 2 and is of type $[x,y][z,v]\backslash u,w$, with either $\{u,w\}=\{x,y\}$, or $\{u,w\}=\{z,v\}$, or $\{u,w\}\nsubseteq\{x,y,z,v\}$;
  \item it has deficiency 1 and is of type $[x,y]\backslash z$, with $z \notin \{x,y\}$ and $za \neq x$.
\end{enumerate}
\end{proposition}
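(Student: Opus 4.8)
The plan is to establish both conclusions uniformly, by proving that in each of the four cases the single word $aaa$ satisfies $\mathrm{df}_\A(aaa)\ge 3$. This suffices: such a word is itself a $3$-compressing word (so $\A$ is $3$-compressible), and it has length $3$ (so $\A$ is compressed by a word of length $3$, i.e.\ is not proper). I would build the witness out of the letter $a$ alone, which is forced since the hypotheses constrain only $a$ yet the statement must hold for every choice of the second letter $b$. Throughout I would use the elementary monotonicity $\mathrm{df}_\A(wv)\ge\mathrm{df}_\A(w)$ --- applying a transformation to a subset cannot increase its cardinality --- so that $\mathrm{df}_\A(aaa)\ge\mathrm{df}_\A(aa)\ge\mathrm{df}_\A(a)$, and it is enough to reach deficiency $3$ at the first power of $a$ that achieves it.

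The computation governing every case is the same: $\Imm(a^2)=\Imm(a)\,a$, and a nontrivial kernel class $\{p,q\}$ of $a$ produces a fresh collapse (dropping the image size by one) under the next application of $a$ precisely when both $p$ and $q$ still belong to the set being mapped. So the proof reduces to tracking which nontrivial kernel classes of $a$ survive into $\Imm(a)$ and then into $\Imm(a^2)$. Cases (1)--(3) are then immediate. In (1), $\mathrm{df}_\A(a)\ge 3$ already. In (2), $\Imm(a)=Q\setminus\{u,v\}$ and the hypothesis $\{u,v\}\nsubseteq\{x,y,z\}$ forces at least two of $x,y,z$ to survive in $\Imm(a)$, so a second application of $a$ collapses them again and $\mathrm{df}_\A(aa)\ge 3$. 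In (3), $a$ has the two size-$2$ classes $\{x,y\}$ and $\{z,v\}$, and each of the three listed possibilities for $\{u,w\}$ guarantees that at least one class is disjoint from $\{u,w\}$, hence survives in $\Imm(a)=Q\setminus\{u,w\}$ and again yields $\mathrm{df}_\A(aa)\ge 3$; I would remark that the single excluded configuration, with one of $u,w$ in each class, is exactly the one keeping the deficiency at $2$, which explains the shape of the hypothesis.

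Case (4), where $\mathrm{df}_\A(a)=1$ and two further collapses are needed, is the crux and the main obstacle. For $a=[x,y]\backslash z$ with $z\notin\{x,y\}$ the pair $\{x,y\}$ lies in $\Imm(a)=Q\setminus\{z\}$, so the second application of $a$ gives $\mathrm{df}_\A(aa)=2$; the key step is to identify $\Imm(a^2)=Q\setminus\{z,za\}$, the point being that within $\Imm(a)$ the only state whose sole $a$-preimage has been discarded is $za$ (using $za\neq z$ and $za\neq xa$, the latter since otherwise $z$ would join the class $\{x,y\}$). A third application of $a$ then collapses $\{x,y\}$ once more iff $\{x,y\}\subseteq\Imm(a^2)$, i.e.\ iff $za\notin\{x,y\}$, which is what the hypotheses $z\notin\{x,y\}$ and $za\neq x$ (read for the unordered pair $\{x,y\}$) secure, giving $\mathrm{df}_\A(aaa)=3$. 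The delicate point I would stress is the converse behaviour: if $za\in\{x,y\}$ the pair does not survive into $\Imm(a^2)$, every higher power of $a$ stagnates at deficiency $2$, and the automaton is genuinely proper (and, when $b$ is a permutation, not even $3$-compressible). Thus the whole weight of case (4) rests on the image computation $\Imm(a^2)=Q\setminus\{z,za\}$ and on the position of $za$ relative to $\{x,y\}$.
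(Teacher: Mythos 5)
Your proof is correct and follows the same route as the paper, which disposes of this proposition in one line by observing that under each condition one of $a$, $a^2$, $a^3$ already has deficiency $3$; your case analysis --- in particular the computation $\Imm(a^2)=Q\setminus\{z,za\}$ in case (4) and the reading of $za\neq x$ as $za\notin\{x,y\}$ for the unordered kernel class, which is the convention the paper uses when listing the proper types --- supplies exactly the details the paper leaves implicit. The only blemish is the closing aside on the case $za\in\{x,y\}$: powers of $a$ do stagnate at deficiency $2$ there, but whether the automaton is then proper, or even $3$-compressible, depends on $b$ (cf.\ Proposition \ref{proposition:4-p}), so ``genuinely proper'' and ``not even $3$-compressible when $b$ is a permutation'' are overstatements --- though this lies outside what the proposition asserts and does not affect your proof.
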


The proof of the previous proposition is trivial, indeed if the letter $a$ fulfills one of the above conditions, then either $a$ or $a^2$ or $a^3$ has deficiency 3.
Since we are looking for a $3$-compressible proper automaton we may assume that each letter of the alphabet either is a permutation or is of one of the following types (we assume different letters represent different states):
\begin{description}
  \item[\bf{1.}] $[x,y,z]\backslash x,y$;
  \item[\bf{2.}] $[x,y][z,v]\backslash x,z$;
  \item[\bf{3.}] $[x,y]\backslash x$;
  \item[\bf{4.}] $[x,y]\backslash z$ with $za=x$.
\end{description}

In the sequel we view the set $Q$ of the states of $\A$ as a set of natural numbers: $Q=\{1,2,\ldots,n\}$, so that, when no confusion arises, a letter $a$ of types $\bf{1},\bf{2},\bf{3},\bf{4}$ is denoted respectively by $[1,2,3]\backslash 1,2$, $[1,2][3,4]\backslash 1,3$, $[1,2]\backslash 1$,
$[1,2]\backslash 3$ with $3a=1$.

\begin{definition}
Let $w\in\Sigma^+$, we call $\M(w)=Q\setminus Qw$ the \emph{missing set} of $w$.
Let $Q_1\subseteq Q$, we denote by $\M(Q_1,w)$ the set $\M(w)\cup \{qw\mid \ q\in Q_1 \text{ and } \forall q'\in Q\setminus Q_1,\ qw\neq q'w\}$, i.e., the \emph{missing set of $w$ having already missed $Q_1$}.
\end{definition}

Observe that $\M(\varnothing,w)=\M(w)$, and if $a\in\Sigma$ is a permutation, $\M(Q_1,a)=Q_1a$.
Moreover, $|\M(w)|\geq |\M(w_1)|$, whenever $w_1$ is a factor of $w$.

\begin{definition}
With abuse of language, for a letter $a$ and $Q_1 \subseteq Q$, we call the \emph{orbit of $a$ over $Q_1$} the set $Orb_a(Q_1)=\bigcup^{+\infty}_{n=0}Q_1a^n=\bigcup_{q\in Q_1}Orb_a(q)$.
\end{definition}

We say that $\A$ is a $(\bf{i},\bf{j})$-\emph{automaton}, $1\leq i,j\leq 4$, if it is an automaton on a two letter alphabet $\{a,b\}$ and the letter $a$ is of type $\bf{i}$ and $b$ is of type $\bf{j}$.
We say that $\A$ is a $(\bf{i},\bf{p})$-\emph{automaton}, with $1\leq i\leq 4$, to denote that the letter $a$ is of type $\bf{i}$ and $b$ is a permutation.
In the sequel, without loss of generality, we will always suppose that in a  $(\bf{i},\bf{j})$-automaton (resp. $(\bf{i},\bf{p})$-automaton) the letter $a$ is of type $\bf{i}$ and $b$ is of type $\bf{j}$ (resp. a permutation).

Although the notion of missing set is sufficient to describe the compressibility of an automaton, it is in general quite intricate to use, especially when long words are involved.
So, to easily calculate the set $\M(Q_1,w)$, we introduce a graphical device, the \emph{missing state automaton} of $\A$.

\begin{definition}
Let $\A=(Q,\Sigma,\delta)$ be a deterministic (semi)automaton, with $|Q|=n$ and $m<n$.
The (complete) $m$-Missing State Automaton ($m$MSA for short) of $\A$ is the automaton $\M=(\wp^{m-1}(Q)\cup\{\mathbf{m}\},\Sigma,\emptyset,\tau,\mathbf{m})$, where $\wp^{m-1}(Q)$ is the set of subsets of $Q$ of cardinality less or equal to $m-1$, $\mathbf{m}$ is a special state not belonging to $\wp^{m-1}(Q)$ graphically denoted by a circle with $m$ token inside, and $\tau: \wp^{m-1}(Q)\times \Sigma\rightarrow \wp^{m-1}(Q)\cup\{\mathbf{m}\}$ is the transition relation defined by
\[
\tau(Q_1,a)=\left\{
              \begin{array}{ll}
                \M(Q_1,a), & \hbox{if $|\M(Q_1,a)|<m$;} \\
                \mathbf{m}, & \hbox{else.}
              \end{array}
            \right.
\]
notice that $\tau$ is not defined over $\mathbf{m}$.
\end{definition}

For example, in Fig. \ref{es} we draw the 2MSA of a simple semiautomaton, proving that it is synchronizable.

\begin{figure}[ht]%
\centering
\subfigure[][The Cern\'{y} automaton $\A$.]{%
{\footnotesize
\begin{tikzpicture}[shorten >=1pt,node distance=5em,auto]

    \node[state] (q_0) [draw=none] {};
    \node[state] (q_1) [right of= q_0] {$1$};
    \node[state] (q_2) [below of= q_0] {$0$};
    \node[state] (q_3) [right of= q_2,draw=none] {};
    \node[state] (q_4) [right of= q_3] {$2$};

    \path[->] (q_1) edge [loop above] node {$a$} (q_1)
              (q_1) edge node {$b$} (q_4)
              (q_4) edge [loop right] node {$a$} (q_4)
              (q_4) edge node {$b$} (q_2)
              (q_2) edge node {$a,b$} (q_1);

\end{tikzpicture}}%
}
\hspace{20pt}%
\subfigure[][The 2MSA of $\A$.]{%
{\footnotesize
\tikzstyle{accepting}=[tokens=2,accepting by double]
\begin{tikzpicture}[shorten >=1pt,node distance=5em,auto]

    \node[state] (q_0) {};
    \node[state] (q_1) [right of= q_0] {$0$};
    \node[state] (q_2) [right of= q_1] {$1$};
    \node[state] (q_3) [right of= q_2,draw=none] {};
    \node[state] (q_4) [right of= q_3] {$2$};
    \node[state,accepting] (q_5) [below of= q_3] {};

    \path[->] (q_0) edge [loop above] node {$b$} (q_0)
              (q_0) edge node {$a$} (q_1)
              (q_1) edge [loop above] node {$a$} (q_1)
              (q_1) edge node {$b$} (q_2)
              (q_2) edge node {$b$} (q_4)
              (q_4) edge [bend right=25] node [swap] {$b$} (q_1)
              (q_2) edge node {$a$} (q_5)
              (q_4) edge node {$a$} (q_5);

    \draw [->] (-.5,.5) -- (-.27,.27);

\end{tikzpicture}}}%
\caption[]{The Cern\'{y} semiautomaton with 3 states and its 2MSA: the set of synchronizing words for $\A$ is the regular language $b^*a^+(b^3)^*(b+b^2)a(a+b)^*$.}%
\label{es}%
\end{figure}

The notion of missing state automaton is similar to that of power state automaton, which is a standard tool in computing synchronizing words, see \cite{Trahtman2006,Kudlacik,Volkov2008}, the difference is that the names of states are replaced by their complements and all superstates made by more than $m$ states are identified.
Although, power set automata are only used to design algorithm to find possibly shortest synchronizing words of a fixed automaton, while we need to consider a whole class of automata.
Moreover, as we are only interested in knowing if an automaton is 3-compressible, often we will draw only a \emph{Partial $3$-Missing State Automaton} (P3MSA), \emph{i.e.}, a path (possibly the shortest) from the initial to a final state of the whole 3MSA.

Lastly, we observe that when considering a family of automata, dozens of subcases arise when we try to find some (short) 3-collapsing word for such family.
So, to capture a greater number of cases and improve the readability, we gather several subcases using a \emph{conditional} 3MSA.
In such case, a label can be of the form $a|qw\in Q'$. So, $\tau(q_1,a|qw\in Q')=q_2$ means that $\M(q_1,a)=q_2$ under the hypothesis (condition) that $qw\in Q'$.
Observe that the condition $qw\in Q'$ spreads over all the states reached by $q_2$, so two different states can share the same name, when belonging to different branches.
For example, in the conditional 3MSA in Fig. \ref{fig:caso:3-p:d:2:1}, the two states named by $\{1,3\}$ have different behavior as the one in the first row inherits the condition $3a=3$ and then $\M({1,3},a)=\{1,3\}$, while the one in the second row inherits the condition $3a=2$ and then $\M({1,3},a)=\{1,2\}$. 
\section{\texorpdfstring{3-compressible $(\bf{i},\bf{p})$-automata}{3-compressible i,p-automata}}\label{permutation}

In this section we characterize all proper $3$-compressible automata over the alphabet $\{a,b\}$ in which the letter $b$ acts as a permutation on the set $Q$ of states.
In particular in the following propositions we give a small set of short $3$-collapsing words when letter $a$ is of type $\bf{i}$, $1\leq i\leq 4$.


\begin{proposition}\label{proposition:1-p}
Let $\mathcal{A}$ be a $(\bf{1},\bf{p})$-automaton with $a=[1,2,3]\backslash 1,2$. Then $\A$ is $3$-compressible and proper if, and only if, the following conditions hold:
\begin{enumerate}
  \item $Orb_{b}(1,2)\nsubseteq\{1,2,3\}$, and
  \item $\{1,2\}b \subset\{1,2,3\}$.
\end{enumerate}
Moreover, if $\A$ is proper and $3$-compressible, then the word $ab^2a$ 3-compresses $\A$.
\end{proposition}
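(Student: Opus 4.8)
The plan is to decouple the two conditions: I will show that condition~(2) is equivalent to \emph{properness} (no word of length three compresses $\A$), and that, \emph{assuming}~(2), condition~(1) is equivalent to $3$-compressibility, with $ab^2a$ as an explicit witness. The computational backbone is a single formula for the action of $a$ on small missing sets. Since $a=[1,2,3]\backslash 1,2$ collapses $\{1,2,3\}$ to a common value $t=3a\notin\{1,2\}$ and is injective elsewhere, for every $Q_1\subseteq Q$ with $|Q_1|\le 2$ one gets
\[
\M(Q_1,a)=\{1,2\}\cup (Q_1\setminus\{1,2,3\})a,\qquad |\M(Q_1,a)|=2+|Q_1\setminus\{1,2,3\}|,
\]
because the common value $t$ has the three preimages $\{1,2,3\}$, none of which can be contained in a set of size at most two, while any $r\in Q_1\setminus\{1,2,3\}$ is the unique preimage of $ra\notin\{1,2\}$. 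I would establish this elementary identity first, as every later step reduces to it.

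For properness, I would apply the formula to the words of length three. Because $b$ is a permutation, the missing set can only grow under $a$, and the first $a$ always yields $\{1,2\}\subseteq\{1,2,3\}$; a direct check of the eight words then shows that the only one that can reach deficiency~$3$ is $aba$, for which $\M(ab)=\{1,2\}b$ and hence $\mathrm{df}_{\A}(aba)=2+|\{1,2\}b\setminus\{1,2,3\}|$. Thus $aba$ fails to compress precisely when $\{1,2\}b\subseteq\{1,2,3\}$, giving properness $\iff$~(2). (No word of length less than three can reach deficiency three, so nothing shorter interferes.)

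Assuming~(2), the formula gives $\M(ab^2)=\{1,2\}b^2$ and $\mathrm{df}_{\A}(ab^2a)=2+|\{1,2\}b^2\setminus\{1,2,3\}|$, so $ab^2a$ compresses iff $\{1,2\}b^2\nsubseteq\{1,2,3\}$. The crux is the purely combinatorial claim that, \emph{under}~(2),
\[
Orb_{b}(1,2)\nsubseteq\{1,2,3\}\quad\Longleftrightarrow\quad \{1,2\}b^2\nsubseteq\{1,2,3\}.
\]
The nontrivial implication is the \lq\lq escape within two steps\rq\rq\ one: if $\{1,2\}b\subseteq\{1,2,3\}$ and also $\{1,2\}b^2\subseteq\{1,2,3\}$, a short case analysis on whether $3\in\{1b,2b\}$ shows that either $\{1,2\}$ or $\{1,2,3\}$ is $b$-invariant, so the whole orbit stays inside $\{1,2,3\}$. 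This equivalence simultaneously yields the \lq\lq moreover\rq\rq\ (when (1) and (2) hold we get $\{1,2\}b^2\nsubseteq\{1,2,3\}$, so $ab^2a$ compresses) and reduces the forward direction of the characterization to its converse.

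The remaining, and hardest, step is to show that if~(2) holds but~(1) fails then $\A$ is \emph{not} $3$-compressible. Here I would argue by an invariant on the missing-state automaton: every reachable missing set has size at most two and is contained in $\{1,2,3\}$. Indeed $\varnothing$ and $\{1,2\}$ are of this form; the formula shows that $a$ sends any $Q_1\subseteq\{1,2,3\}$ back to $\{1,2\}$, while $b$ sends $\{1,2\}b^{k}$ to $\{1,2\}b^{k+1}$, which stays in $\{1,2,3\}$ precisely because $Orb_{b}(1,2)\subseteq\{1,2,3\}$ when~(1) fails. Since no reachable set ever leaves $\{1,2,3\}$, the formula forbids the deficiency from exceeding~$2$. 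The main obstacle is organizing this invariance cleanly — confirming that interleaving $a$'s never produces a set outside $\{1,2,3\}$ — which is exactly where the identity $\M(Q_1,a)=\{1,2\}$ for $Q_1\subseteq\{1,2,3\}$ does the decisive work.
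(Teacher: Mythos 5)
Your proposal is correct and follows essentially the same route as the paper: the key facts in both are that $\M(Q_1,a)=\{1,2\}$ whenever $Q_1\subseteq\{1,2,3\}$ (so the orbit condition blocks $3$-compression), that $aba$ is the only length-$3$ word that can reach deficiency $3$ (so condition 2 governs properness), and that $\{1,2\}b\subseteq\{1,2,3\}$ together with $\{1,2\}b^2\subseteq\{1,2,3\}$ forces $Orb_b(1,2)\subseteq\{1,2,3\}$, which yields the $ab^2a$ witness. Your version is merely more explicit (the general formula for $\M(Q_1,a)$ and the case analysis behind the orbit claim), which fills in details the paper leaves terse.
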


\begin{proof}
Let $\A$ be a $(\bf{1},\bf{p})$-automaton that does not satisfy one of the conditions $1.$ and $2.$ If $Orb_{b}(1,2)\subseteq\{1,2,3\}$ for all word $w\in\{a,b\}^*$ it is $\M(wa)=\{1,2\}$, then $\A$ is not $3$-compressible. Else, if $\{1,2\}b\nsubseteq\{1,2,3\}$, then $\M(a)=\{1,2\}$, $\M(ab)=\{1b,2b\}\nsubseteq\{1,2,3\}$, and $|\M(aba)|=3$, so $\A$ is not proper.

Conversely, let $\A$ be an automaton satisfying conditions $1.$ and $2.$ A $3$-compressing word for $\A$ must have at least two non-consecutive occurrences of letter $a$, and the word $aba$ is not $3$-compressing.
Moreover, $\{1,2\}b^2 \nsubseteq\{1,2,3\}$, else $Orb_{b}(1,2)\subseteq\{1,2,3\}$, against the hypothesis, and then the word $ab^2a$ $3$-compresses $\A$.
\end{proof}


\begin{proposition}\label{proposition:2-p}
Let $\mathcal{A}$ be a $(\bf{2},\bf{p})$-automaton with $a=[1,2][3,4]\backslash 1,3$.
Then $\mathcal{A}$ is 3-compressible and proper if, and only if,
\begin{enumerate}
  \item $Orb_{b}(1,3)\nsubseteq\{1,2,3,4\}$ and $\{1,3\}b\subseteq\{1,2,3,4\}$, or
  \item $\{1,3\}b\in\{\{1,4\},\{2,3\}\}$ and either $|Orb_b(1)|= 3$ or $|Orb_b(3)|= 3$.
\end{enumerate}
Moreover, if $\mathcal{A}$ is proper and $3$-compressible, then one of the words $ab^2a$ or $ab^3a$ $3$-compresses $\A$.
\end{proposition}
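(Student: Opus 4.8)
The plan is to argue entirely inside the $3$-Missing State Automaton of $\A$, tracking how the missing set evolves, exactly as in Proposition~\ref{proposition:1-p}. First I would record the one computation that drives everything: since $a=[1,2][3,4]\backslash 1,3$ merges the pairs $\{1,2\}$ and $\{3,4\}$ and moves no state of $\{5,\dots,n\}$ out of that block (it may only permute those states among themselves, which is immaterial), for any $Q_1\subseteq Q$ one has $\M(Q_1,a)=\{1,3\}\cup(Q_1\cap\{5,\dots,n\})a\cup E$, where $2\in E$ iff $\{1,2\}\subseteq Q_1$ and $4\in E$ iff $\{3,4\}\subseteq Q_1$. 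Reading this off for a two--element set $Q_1$ gives the key dichotomy: applying $a$ to $Q_1$ produces a missing set of cardinality $3$ precisely when $Q_1\nsubseteq\{1,2,3,4\}$ or $Q_1\in\{\{1,2\},\{3,4\}\}$, and otherwise it \emph{resets} the missing set to $\{1,3\}$. Call a two--element set \emph{good} in the former case.

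Next I would use that $b$ is a permutation, so $\M(Q_1,b)=Q_1b$ preserves cardinality, while $a$ is the only letter that can enlarge the missing set and, by the dichotomy, an unsuccessful application of $a$ always returns to $\{1,3\}$. Hence the only reachable missing sets of cardinality $\leq 2$ are $\varnothing$ and the members of the $b$--orbit $\{\,\{1,3\}b^k : k\geq 0\,\}$, and the shortest route to cardinality $3$ is a word $ab^ka$ with $\{1,3\}b^k$ good. This yields two clean reformulations: $\A$ is $3$--compressible iff some $\{1,3\}b^k$ is good, and, since among the length--$3$ words only $aba$ can reach deficiency $3$, $\A$ is proper iff $\{1,3\}b$ is \emph{not} good, i.e. $\{1,3\}b\in\{\{1,3\},\{1,4\},\{2,3\},\{2,4\}\}$. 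Combining, $\A$ is proper and $3$--compressible iff $\{1,3\}b$ is one of those four sets and some later $\{1,3\}b^k$ with $k\geq 2$ is good.

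I would then split according to whether the orbit escapes the block $\{1,2,3,4\}$. If $Orb_b(1,3)\nsubseteq\{1,2,3,4\}$ (this is condition~$1$, whose clause $\{1,3\}b\subseteq\{1,2,3,4\}$ re--encodes properness, the pairs $\{1,2\},\{3,4\}$ being exactly the non--proper ones excluded), then some $\{1,3\}b^k$ leaves the block and is good, and the first step at which this happens can be bounded by $k\leq 3$, so $ab^2a$ or $ab^3a$ compresses. If instead $Orb_b(1,3)\subseteq\{1,2,3,4\}$, goodness can only come from hitting a merge pair $\{1,2\}$ or $\{3,4\}$; a finite check of the orbit of the two--element set $\{1,3\}$ under a permutation of $\{1,2,3,4\}$ shows this is possible with $\{1,3\}b$ non--good exactly when $\{1,3\}b\in\{\{1,4\},\{2,3\}\}$ and $|Orb_b(1)|=3$ or $|Orb_b(3)|=3$ (the cases $\{1,3\}$ and $\{2,4\}$ give a $b$--invariant, respectively period--$2$, set--orbit that never reaches a merge pair); in these cases $\{1,3\}b^2$ is already a merge pair, so $ab^2a$ compresses. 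This is condition~$2$, and the two branches together give the ``moreover'' claim that $ab^2a$ or $ab^3a$ always works.

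The two delicate points, where I expect the real work, are: (i) the bound $k\leq 3$ in the escape branch --- the clean argument is that if $1,1b,1b^2,1b^3$ all remained in $\{1,2,3,4\}$ and were distinct they would exhaust the block, whence (using properness, $3b\in\{1,2,3,4\}$) $3=1b^{\,j}$ with $j\in\{1,2\}$, so $3$ escapes after $4-j\leq 3$ steps, while any repetition among the iterates instead traps the point forever, contradicting escape; and (ii) the exhaustive but short analysis of two--subset orbits inside $\{1,2,3,4\}$ underpinning condition~$2$, which is most transparently organised on a conditional P3MSA listing the admissible values of $1b$ and $3b$. Everything else is the routine bookkeeping of missing--set transitions already illustrated in Proposition~\ref{proposition:1-p}.
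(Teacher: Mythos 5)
Your proposal is correct and follows essentially the same route as the paper: both arguments track the $b$-orbit of the missing set $\{1,3\}$ and ask when an application of $a$ raises the deficiency to $3$, splitting on whether that orbit escapes $\{1,2,3,4\}$ (with a pigeonhole bound giving $k\leq 3$, where the paper instead cases on $|\{1,3\}\cup\{1,3\}b|\in\{3,4\}$) or stays inside (a finite check of the four possible values of $\{1,3\}b$, which the paper carries out via the 3MSA diagrams); your \emph{good pair / reset to $\{1,3\}$} dichotomy is just a cleaner packaging of the same computation. The only caveat is that, exactly like the paper's own converse (``Then $\A$ is proper''), you silently read condition 1 as excluding $\{1,3\}b\in\{\{1,2\},\{3,4\}\}$, which its literal wording does not do --- a shared imprecision in the statement rather than a gap specific to your argument.
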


\begin{proof}
Let $\A$ a $(\bf{2},\bf{p})$-automaton that does not satisfy both conditions $1.$ and $2.$ If $\{1,3\}b\nsubseteq\{1,2,3,4\}$, then the word $aba$ $3$-compresses $\A$, which is not proper.
So, let $Orb_{b}(1,3)\subseteq\{1,2,3,4\}$, then we have to consider only the following cases:
\begin{enumerate}
    \item $\{1,3\}b\in\{\{1,2\},\{3,4\}\}$, then again the word $aba$ 3-compresses $\A$;
    \item $\{1,3\}b=\{1,3\}$, then for all $w\in b^*$, $\M(w)=\emptyset$, while for all $w\in \{a,b\}^*\setminus b^*$, $\M(w)=\{1,3\}$, then $\A$ is not $3$-compressible;
    \item $\{1,3\}b\in\{\{1,4\},\{2,3\}\}$ with $|Orb_b(1)|\neq 3$ or $|Orb_b(3)|\neq 3$, then $b=(1 4 2 3)\pi$ or $b=(1 3 2 4)\pi$ or $b=(1)(2)(3 4)\pi$ or $b=(1 2)(3)(4)\pi$. The 3MSA in figures \ref{fig:caso:2-p:2} and \ref{fig:caso:2-p:3} prove that in any case $\A$ is not $3$-compressible;
    \item $\{1,3\}b=\{2,4\}$, in this case we have four subcases: $b=(1 2)(3 4)\pi$ or $b=(1 4)(3 2)\pi$ or $b=(1 2 3 4)\pi$ or $b=(1 4 3 2)\pi$, and the 3MSA in Fig. \ref{fig:caso:2-p:3} proves that $\A$ is not $3$-compressible.
\end{enumerate}

\begin{figure}[ht]%
\centering
\subfigure[][3MSA for the case $b=(1 4 2 3)\pi$ or $b=(1 3 2 4)\pi$.]{%
\label{fig:caso:2-p:2}%
{\footnotesize
\begin{tikzpicture}[shorten >=1pt,node distance=4.6em,auto]
    \node[state] (q_0) {};
    \node[state] (q_1) [right of= q_0] {$1,3$};
    \node[state] (q_2) [right of= q_1] {$1b,3b$};
    \node[state] (q_3) [right of= q_2] {$2,4$};
    \node[state] (q_4) [right of= q_3] {$2b,4b$};

    \path[->] (q_0) edge node {$a$} (q_1) edge [loop above] node {$b$} ()
              (q_1) edge [bend left=10] node {$b$} (q_2) edge [loop above] node {$a$} ()
              (q_2) edge [bend left=10] node {$a$} (q_1) edge node {$b$} (q_3)
              (q_3) edge [bend left=30] node {$a$} (q_1) edge node {$b$} (q_4)
              (q_4) edge [bend right=30] node [swap] {$a,b$} (q_1);

    \draw [->] (-.5,.5) -- (-.27,.27);

\end{tikzpicture}}}%
\hspace{30pt}%
\subfigure[][3MSA for the case $b=(1)(2)(3 4)\pi$, $b=(1 2)(3)(4)\pi$, or $\{1,3\}b=\{2,4\}$.]{%
\label{fig:caso:2-p:3}%
{\footnotesize
\begin{tikzpicture}[shorten >=1pt,node distance=4.6em,auto]

    \node[state] (q_0) {};
    \node[state] (q_1) [right of= q_0] {$1,3$};
    \node[state] (q_2) [right of= q_1] {$1b,3b$};
    \node[state] (q_3) [draw=none,right of= q_2] {};

    \path[->] (q_0) edge node {$a$} (q_1) edge [loop above] node {$b$} ()
              (q_1) edge [bend left=10] node {$b$} (q_2) edge [loop above] node {$a$} ()
              (q_2) edge [bend left=10] node {$a,b$} (q_1);

    \draw [->] (-.5,.5) -- (-.27,.27);

    \phantom{\path[->] (q_2) edge [bend left=30] node {$a$} (q_0);}
    \phantom{\path[->] (q_2) edge [loop right] node {} (q_2);}

\end{tikzpicture}}}%
\caption[]{3MSA for automata that do not satisfy conditions $1.$ and $2.$ of Proposition \ref{proposition:2-p}.}%
\label{fig:caso:2-p}%
\end{figure}

Conversely,
  \begin{enumerate}
    \item if $Orb_{b}(1,3)\nsubseteq\{1,2,3,4\}$ and $\{1,3\}b\subseteq\{1,2,3,4\}$.
        Then $\A$ is proper and either $1b\neq 1$ or $3b\neq 3$, and so $|\{1,3,1b,3b\}|\geq 3$.
        If $|\{1,3,1b,3b\}|=4$, then $|\{1,3,1b,3b,1b^2,3b^2\}|>4$ and the word $ab^2a$ 3-compresses $\A$.
        Otherwise, let $|\{1,3,1b,3b\}|=3$.
        Without loss of generality, we can assume $1b=1$ and $3b\neq 3$.
        As $3b\in\{2,4\}$, $\A$ is proper.
        If $3b^2\not\in\{1,2,3,4\}$, then the word $ab^2a$ 3-compresses $\A$.
        If $3b^2\in\{1,2,3,4\}$, actually $3b^2\in\{2,4\}$.
        As $3b\neq 3b^2$, then $|\{1,3,3b,3b^2\}|=4$, so $3b^3\not\in\{1,2,3,4\}$ and the word $ab^3a$ 3-compresses $\A$.
    \item Suppose $\{1,3\}b=\{1,4\}$.
        If $|Orb_b(1)|= 3$, then $1b=4$ and $3b=1$, i.e., $b=(1 4 3)\pi$,
        if $|Orb_b(3)|= 3$, then either $b=(1 4 3)\pi$, or $b$ fixes 1 and is of the form $(3 4 x)\pi$ for some $x\not\in\{1,3,4\}$.
        Then $\{1,3\}b^2\in\{\{1,x\},\{3,4\}\}$, so the word $ab^2a$ $3$-compresses $\A$.
        The case $\{1,3\}b=\{2,3\}$ is identical.
  \end{enumerate}
\end{proof}

Observe that each $3$-compressible $(\bf{3},\bf{p})$- or $(\bf{4},\bf{p})$-automaton $\A$ is proper.
Indeed each $3$-compressing word for $\A$ contains at least three occurrences of the letter $a$ which are not all consecutive.
So in the next two propositions, we only look for $3$-compressible automata.


\begin{proposition}\label{proposition:3-p}
Let $\A$ be a $(\bf{3},\bf{p})$-automaton with $a=[1,2]\backslash 1$.
Then $\A$ is $3$-compressible (and proper) if, and only if, the following conditions hold:
 \begin{enumerate}
   \item $|Orb_b(1)|\geq 2$ and $\{1,2\}b\neq\{1,2\}$, and
   \item if $b=(13)\pi$, then
   \begin{enumerate}
     \item $3a\neq3$, and
     \item if $3a=2$, then $2b\neq 2$ or $2a\neq 3$, and
     \item if $3a=2b$ and $2b\not\in\{2,3\}$, then $2b^2\neq 2$ and $2ba\neq 3$,
   \end{enumerate}
   \item if $b=(123)\pi$, or $b=(132)\pi$, then $\{2,3\}a\neq \{2,3\}$, and
   \item if $b=(1324)\pi$, then $\{3,4\}a\neq \{3,4\}$.
 \end{enumerate}
Moreover, if $\mathcal{A}$ is $3$-compressible (and proper), then one of the words $ababa$ or $aba^2ba$ or or $ab^2ab^2a$ or $ab^2a^2b^2a$ or $ab^2abab^2a$ or $abab^2aba$ or $ab^3aba$ or $abab^3a$ or $ab^3ab^3a$ $3$-compresses $\A$.
\end{proposition}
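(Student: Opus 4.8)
The plan is to translate $3$-compressibility into the behaviour of the missing set $\M(w)$ under the two letters and to reduce everything to a finite case analysis on the cycle structure of the permutation $b$, carried out through (conditional) 3MSA diagrams. The starting point is an elementary computation of how $a=[1,2]\backslash 1$ acts on missing sets: writing $c=1a=2a$, for any $M\subseteq Q$ one has
\[
\M(M,a)=\{1\}\cup\{sa : s\in M\setminus\{1,2\}\}\cup\bigl(\{c\}\text{ if }\{1,2\}\subseteq M\bigr),
\]
so that $|\M(M,a)|=1+|M\setminus\{1,2\}|+[\{1,2\}\subseteq M]$, while a permutation merely carries $M$ to $Mb$. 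Two consequences drive the whole argument: to raise the deficiency from $1$ to $2$ one must first move state $1$ out of $\{1,2\}$ by a power of $b$ and then apply $a$, and to raise it from $2$ to $3$ one must arrange a size-$2$ missing set entirely disjoint from $\{1,2\}$ and then apply $a$. Thus every compressing word factors as $a\,b^{i_1}a\,b^{i_2}a\cdots$, with the $a$'s doing the compressing and the $b$-blocks positioning the missing states.

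For necessity I would show that failure of any condition traps the missing set at size at most $2$. If condition $1$ fails then either $1b=1$ or $\{1,2\}b=\{1,2\}$, so $Orb_b(1)\subseteq\{1,2\}$ and state $1$ can never leave $\{1,2\}$; by the growth formula $\M$ is forced to stay of size $1$, and $\A$ is not even $2$-compressible. The remaining conditions $2$–$4$ only bite for the short cycles $b=(13)\pi$, $b=(123)\pi$, $b=(132)\pi$, $b=(1324)\pi$ through state $1$: these are exactly the permutations for which $Orb_b(1)$ is small and stays so entangled with $\{1,2\}$ that producing a size-$2$ missing set disjoint from $\{1,2\}$ forces constraints on the images $3a, 2b,\dots$. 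For each such $b$ with the extra hypothesis violated I would exhibit a small 3MSA (as in Fig.~\ref{fig:caso:2-p}) every path of which avoids the accepting state, certifying non-$3$-compressibility; the three nested clauses of condition $2$ correspond to the successive ways the pair $\{1,3a\}$ can be dragged back into $\{1,2\}$ by $b$ and by a second application of $a$.

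For sufficiency, assuming conditions $1$–$4$, I would split on the cycle of $b$ carrying state $1$ and, within each branch, on the relevant images of $a$ (on $2$, $3$, $4$ and on the early iterates $1b, 1b^2,\dots$), and in every branch read off one of the nine words as a labelled path reaching the accepting state of a partial 3MSA. The short word $ababa$ handles the generic situation where a single $b$ already places the accumulated missing state and $1b$ outside $\{1,2\}$; the words with $b^2$ or $b^3$ cover the cases where state $1$ escapes $\{1,2\}$ only after two or three steps (transpositions needing odd powers, $3$- and $4$-cycles needing $b^2$ or $b^3$); and the words containing $a^2$, such as $aba^2ba$ and $ab^2a^2b^2a$, cover the cases where an intermediate application of $a$ is needed to reposition a missing state through the map $s\mapsto sa$ before the final compression. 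To keep the number of diagrams manageable I would batch neighbouring subcases into conditional 3MSAs, letting a single edge such as $a\,|\,3a\in Q'$ branch on the still undetermined images.

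The hard part is this sufficiency direction: one must verify that the nine words genuinely exhaust all admissible configurations of $b$ and $a$, with no cycle type or image pattern slipping through uncovered, and that in each configuration the chosen word really drives the 3MSA to the accepting state rather than stalling at deficiency $2$. Arranging the permutation types so that conditions $2$–$4$ — which were tight for necessity — now guarantee that at least one of the nine paths succeeds, and in particular ruling out a spurious configuration satisfying conditions $1$–$4$ that none of the nine words compresses, is the delicate bookkeeping at the centre of the proof.
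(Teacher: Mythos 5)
Your plan follows essentially the same route as the paper: reduce $3$-compressibility to the dynamics of the missing set, observe that deficiency can only grow when $a$ is applied to a missing set sufficiently disjoint from $\{1,2\}$, and then run a case analysis on the cycle of $b$ through state $1$, certifying non-compressibility by closed 3MSAs and compressibility by accepting paths in conditional P3MSAs. Your explicit formula for $\M(M,a)$ and the resulting cardinality count are correct and make precise something the paper leaves implicit, and your treatment of condition $1$ (the missing set stays a singleton $\{1b^i\}$, so $\A$ is not even $2$-compressible) matches the paper's argument.

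The gap is that, beyond condition $1$, the proposal never leaves the planning stage: for the necessity of conditions $2$--$4$ you say you \emph{would} exhibit closed 3MSAs, and for sufficiency you say you \emph{would} split on the cycle type and read off one of the nine words, but none of these diagrams or verifications is actually produced. This is not a side issue --- it is the entire content of the paper's proof, which occupies roughly a dozen figures: the three nested clauses of condition $2$ each require a separate closed 3MSA, and the sufficiency direction requires an exhaustive enumeration of cycle types ($|Orb_b(1)|=2,3,4,\geq 5$, with sub-splits such as $(1234)\pi$ versus $(1324)\pi$ versus $(134)\pi$, and further conditional branches on $3a$, $3ab$, $3ab^2$, $4a$, \dots) together with nontrivial arithmetic arguments ruling out that intermediate missing states fall back into $\{1,2\}$ (e.g.\ the paper's verification that $3ab^2\in\{1,2\}$ forces contradictions in the $(1234)\pi$ case). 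You correctly identify this as ``the delicate bookkeeping at the centre of the proof,'' but identifying where the difficulty lies is not the same as resolving it; as written, the proposal establishes the framework and condition $1$ only, and the claim that the nine listed words exhaust all admissible configurations remains unproved.
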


\begin{proof}
Let $\A$ be a $(\bf{3},\bf{p})$-automaton that does not satisfy one of the conditions 1.-4., we prove that it is not $3$-compressible.

\begin{enumerate}
  \item Let $|Orb_b(1)|=1$ or $\{1,2\}b=\{1,2\}$, then $1b=1$ or $b=(12)\pi$.
      In the former case for all $w\in (a+b)^*$, $\M(w)\in\{\emptyset,\{1\}\}$, in the latter $\M(w)\in\{\emptyset,\{1\},\{2\}\}$, so $\A$ is not $3$-compressible.
  \item Let $b=(13)\pi$ and either $3a=3$, or $3a=2$, $2b= 2$ and $2a= 3$, or $3a=2b=4$, $2b^2= 2$ and $2ba= 3$ (in the last two cases $3a^2=3$ and $3ab=2$). Then the 3MSA in Fig. \ref{fig:caso:3-p:d:1} proves that in any case $\A$ is not 3-compressible.
      \begin{figure}[ht]%
      \centering
      {\footnotesize
      \begin{tikzpicture}[shorten >=1pt,node distance=5em,auto]

        \node[state] (q_0) {};
        \node[state] (q_1) [right of= q_0] {$1$};
        \node[state] (q_2) [right of= q_1] {$3$};
        \node[state] (q_7) [right of= q_2] {$2,3$};
        \node[state] (q_5) [right of= q_7] {$1,3a$};
        \node[state] (q_3) [right of= q_5] {$1,3$};
        \node[state] (q_8) [right of= q_3] {$1,3$};

        \path[->] (q_0) edge node {$a$} (q_1)
                  (q_0) edge [loop above] node {$b$} (q_0)
                  (q_1) edge [loop above] node {$a$} (q_1)
                  (q_1) edge [bend left=10] node {$b$} (q_2)
                  (q_2) edge [bend left=40] node {$a|3a\in\{2,4\}$} (q_5)
                  (q_2) edge [bend right=20] node [swap] {$a|3a=3$} (q_8)
                  (q_2) edge [bend left=10] node {$b$} (q_1)
                  (q_3) edge [bend left=10] node {$a$} (q_5)
                  (q_3) edge [loop above] node {$b$} (q_3)
                  (q_5) edge [bend left=10] node {$a$} (q_3)
                  (q_5) edge [bend right=10] node [swap] {$b$} (q_7)
                  (q_7) edge [bend right=10] node [swap] {$a,b$} (q_5)
                  (q_8) edge [loop above] node {$a,b$} (q_8);

        \draw [->] (-.5,.5) -- (-.27,.27);

      \end{tikzpicture}}\\
      \caption[]{3MSA for automata that do not satisfy condition $2.$ of Proposition \ref{proposition:3-p}.}%
      \label{fig:caso:3-p:d:1}%
      \end{figure}
  \item Let $b=(123)\pi$, or $b=(132)\pi$, and $\{2,3\}a=\{2,3\}$.
      The 3MSA in figures \ref{fig:caso:3-p:d:2:1} and \ref{fig:caso:3-p:d:2:2} prove that $\A$ is not 3-compressible.
      \begin{figure}[ht]%
        \centering
        \subfigure[][$b=(123)\pi$.]{%
        \label{fig:caso:3-p:d:2:1}%
        {\footnotesize
        \begin{tikzpicture}[shorten >=1pt,node distance=5em,auto]

            \node[state] (q_0) {};
            \node[state] (q_1) [right of= q_0] {$3$};
            \node[state] (q_2) [right of= q_1] {$2,3$};
            \node[state] (q_3) [right of= q_2] {$1,3$};
            \node[state] (q_4) [right of= q_3] {$1,2$};
            \node[state] (q_5) [below of= q_0] {$1$};
            \node[state] (q_6) [right of= q_5] {$2$};
            \node[state] (q_7) [right of= q_6] {$1,3$};
            \node[state] (q_8) [right of= q_7] {$1,2$};
            \node[state] (q_9) [right of= q_8] {$2,3$};

            \path[->] (q_0) edge node {$a$} (q_5)
                      (q_0) edge [loop above] node {$b$} (q_0)
                      (q_1) edge [bend left=40] node {$a|3a=3$} (q_3)
                      (q_1) edge [bend left=12] node [swap] {$a|3a=2$} (q_8)
                      (q_1) edge node {$b$} (q_5)
                      (q_2) edge [bend left=10] node {$a,b$} (q_3)
                      (q_3) edge [loop above] node {$a$} (q_3)
                      (q_3) edge [bend left=10] node {$b$} (q_4)
                      (q_4) edge [loop above] node {$a$} (q_4)
                      (q_4) edge [bend left=30] node {$b$} (q_2)
                      (q_5) edge [loop below] node {$a$} (q_5)
                      (q_5) edge [bend left=10] node {$b$} (q_6)
                      (q_6) edge [bend left=10] node {$a$} (q_5)
                      (q_6) edge node {$b$} (q_1)
                      (q_7) edge [bend left=10] node {$a,b$} (q_8)
                      (q_8) edge [bend left=10] node {$a$} (q_7)
                      (q_8) edge [bend left=10] node {$b$} (q_9)
                      (q_9) edge [bend left=10] node {$a$} (q_8)
                      (q_9) edge [bend left=30] node {$b$} (q_7);

            \draw[->] (-.5,.5) -- (-.27,.27);

        \end{tikzpicture}}}%
        \hspace{20pt}%
        \subfigure[][$b=(132)\pi$.]{%
        \label{fig:caso:3-p:d:2:2}%
        {\footnotesize
        \begin{tikzpicture}[shorten >=1pt,node distance=5em,auto]

            \node[state] (q_0) {};
            \node[state] (q_1) [right of= q_0] {$3$};
            \node[state] (q_2) [right of= q_1] {$1,2$};
            \node[state] (q_3) [right of= q_2] {$1,3$};
            \node[state] (q_4) [right of= q_3] {$2,3$};
            \node[state] (q_5) [below of= q_0] {$1$};
            \node[state] (q_6) [right of= q_5] {$2$};
            \node[state] (q_7) [right of= q_6] {$2,3$};
            \node[state] (q_8) [right of= q_7] {$1,2$};
            \node[state] (q_9) [right of= q_8] {$1,3$};

            \path[->] (q_0) edge node {$a$} (q_5)
                      (q_0) edge [loop above] node {$b$} (q_0)
                      (q_1) edge [bend left=30] node {$a|3a=3$} (q_3)
                      (q_1) edge [bend left=10] node [swap] {$a|3a=2$} (q_8)
                      (q_1) edge node {$b$} (q_6)
                      (q_2) edge [loop left] node {$a$} (q_2)
                      (q_2) edge [bend left=10] node {$b$} (q_3)
                      (q_3) edge [loop above] node {$a$} (q_3)
                      (q_3) edge [bend left=10] node {$b$} (q_4)
                      (q_4) edge [bend left=10] node {$a$} (q_3)
                      (q_4) edge [bend left=30] node {$b$} (q_2)
                      (q_5) edge [loop below] node {$a$} (q_5)
                      (q_5) edge node {$b$} (q_1)
                      (q_6) edge node {$a,b$} (q_5)
                      (q_7) edge node {$a,b$} (q_8)
                      (q_8) edge [bend left=10] node {$a,b$} (q_9)
                      (q_9) edge [bend left=10] node {$a$} (q_8)
                      (q_9) edge [bend left=30] node {$b$} (q_7);

            \draw[->] (-.5,.5) -- (-.27,.27);

        \end{tikzpicture}}}%
        \caption[]{3MSA for automata that do not satisfy condition $3.$ of proposition \ref{proposition:3-p}.}%
        \label{fig:caso:3-p:d:2}%
      \end{figure}
  \item Let $b=(1324)\pi$, and $\{3,4\}a=\{3,4\}$.
      The 3MSA in Fig. \ref{fig:caso:3-p:d:4} proves that $\A$ is not 3-compressible.
      \begin{figure}[ht]%
        \centering
        {\footnotesize
        \begin{tikzpicture}[shorten >=1pt,node distance=5em,auto]

            \node[state] (q_00) {};
            \node[state] (q_0) [right of= q_00] {$1$};
            \node[state] (q_7) [right of= q_0] {$2$};
            \node[state] (q_1) [above of= q_7] {$3$};
            \node[state] (q_2) [right of= q_1,draw=none] {};
            \node[state] (q_3) [right of= q_2,draw=none] {};
            \node[state] (q_4) [right of= q_3,draw=none] {};
            \node[state] (q_5) [right of= q_4,draw=none] {};
            \node[state] (q_6) [right of= q_5] {$1,3$};
            \node[state] (q_8) [right of= q_7] {$1,4$};
            \node[state] (q_9) [right of= q_8] {$2,4$};
            \node[state] (q_10) [right of= q_9] {$2,3$};
            \node[state] (q_11) [right of= q_10] {$1,3$};
            \node[state] (q_12) [right of= q_6] {$2,3$};
            \node[state] (q_14) [below of= q_7] {$4$};
            \node[state] (q_15) [right of= q_14,draw=none] {};
            \node[state] (q_16) [right of= q_15,draw=none] {};
            \node[state] (q_17) [right of= q_16,draw=none] {};
            \node[state] (q_18) [right of= q_17,draw=none] {};
            \node[state] (q_19) [right of= q_18] {$1,4$};
            \node[state] (q_13) [right of= q_19] {$2,4$};

            \path[->] (q_00) edge node {$a$} (q_0)
                      (q_00) edge [loop above] node {$b$} (q_00)
                      (q_0) edge [loop above] node {$a$} (q_0)
                      (q_0) edge node {$b$} (q_1)
                      (q_1) edge node {$b$} (q_7)
                      (q_1) edge node {$a|3a=3$} (q_6)
                      (q_1) edge node {$a|3a=4$} (q_8)
                      (q_7) edge node {$b$} (q_14)
                      (q_7) edge node {$a$} (q_0)
                      (q_14) edge node {$b$} (q_0)
                      (q_14) edge node [swap] {$a|3a=3$} (q_19)
                      (q_14) edge [bend right=10,in=230,out=15] node [swap] {$a|3a=4$} (q_11)
                      (q_8) edge [bend right=35] node [swap] {$a,b$} (q_11)
                      (q_9) edge node [swap] {$b$} (q_8)
                      (q_9) edge [bend right=25] node [swap] {$a$} (q_11)
                      (q_10) edge node [swap] {$b$} (q_9)
                      (q_10) edge [bend right=25] node [swap] {$a$} (q_8)
                      (q_11) edge node [swap] {$b$} (q_10)
                      (q_11) edge [bend right=35] node [swap] {$a$} (q_8)
                      (q_12) edge node {$b$} (q_13)
                      (q_6) edge [loop above] node {$a$} (q_13)
                      (q_6) edge [bend left=10] node {$b$} (q_12)
                      (q_12) edge [bend left=10] node {$a$} (q_6)
                      (q_12) edge node {$b$} (q_13)
                      (q_13) edge node {$a,b$} (q_19)
                      (q_19) edge [loop below] node {$a$} (q_19)
                      (q_19) edge node [swap] {$b$} (q_6);

            \draw [->] (-.5,.5) -- (-.27,.27);

        \end{tikzpicture}}\\
        \caption[]{3MSA for automata that do not satisfy condition $4.$ of proposition \ref{proposition:3-p}.}%
        \label{fig:caso:3-p:d:4}%
      \end{figure}
      \FloatBarrier
\end{enumerate}

\FloatBarrier

\noindent Conversely, let $\A$ be an automaton satisfying conditions $1.-4.$
Since $1b\neq 1$ and $\{1,2\}b\neq\{1,2\}$, then $|Orb_b(1)|\geq 2$ and $b$ is not of the form $(12)\pi$.
\begin{enumerate}
  \item Let $|Orb_b(1)|=2$, $b=(13)\pi$, and $3a\neq 3$, then there are two further subcases:
    \begin{enumerate}
      \item if $|Orb_b(2)|\leq 2$, \textit{i.e.}, $b=(13)(2)\pi$ or $b=(13)(24)\pi$, then $\{2b,3\}a\neq\{2b,3\}$ and in Fig. \ref{fig:caso:3-p:r:1:1} we draw a P3MSA for those cases, proving that either the word $ababa$ or $aba^2ba$ $3$-compresses $\A$.
      \item if $|Orb_b(2)|\geq 3$, \textit{i.e.}, $b=(13)(245\ldots)\pi$, then in Fig. \ref{fig:caso:3-p:r:1:3} we draw a P3MSA for this case, proving that either the word $ababa$ or $abab^3a$ $3$-compresses $\A$.
          \begin{figure}[ht]%
            \centering
            \subfigure[][P3MSA for the case $b=(13)(2)\pi$ or $b=(13)(24)\pi$, and $\{2b,3\}a\neq\{2,3\}$.]{%
            \label{fig:caso:3-p:r:1:1}%
            {\footnotesize
            \begin{tikzpicture}[shorten >=1pt,node distance=4.6em,auto]

                \node[state] (q_0) {};
                \node[state] (q_1) [right of= q_0] {$1$};
                \node[state] (q_2) [right of= q_1] {$3$};
                \node[state] (q_3) [right of= q_2] {$1,3a$};
                \node[state] (q_4) [below of= q_0] {$1,3a^2$};
                \node[state,inner sep=-4pt] (q_5) [right of= q_4] {{\scriptsize $\begin{array}{c}3, \\3a^2b\end{array}$}};
                \node[state,accepting] (q_6) [right of= q_5] {};
                \node[state] (q_7) [right of= q_6] {$3,3ab$};

                \path[->] (q_0) edge node {$a$} (q_1)
                          (q_1) edge node {$b$} (q_2)
                          (q_2) edge node {$a$} (q_3)
                          (q_3) edge node [swap] {$a|3a=2b$} (q_4)
                          (q_3) edge node [inner sep=-4pt] {$b|3a\neq 2b$} (q_7)
                          (q_4) edge node [swap] {$b$} (q_5)
                          (q_5) edge node [swap] {$a$} (q_6)
                          (q_7) edge node {$a$} (q_6);

                \draw [->] (-.5,.5) -- (-.27,.27);

            \end{tikzpicture}}%
            }
            \hspace{20pt}%
            \subfigure[][P3MSA for the case $b=(13)(245\ldots)\pi$.]{%
            \label{fig:caso:3-p:r:1:3}%
            {\footnotesize
            \begin{tikzpicture}[shorten >=1pt,node distance=4.6em,auto]

                \node[state] (q_0) {};
                \node[state] (q_1) [right of= q_0] {$1$};
                \node[state] (q_2) [right of= q_1] {$3$};
                \node[state] (q_3) [right of= q_2] {$1,3a$};
                \node[state] (q_4) [below of= q_0] {$2,3$};
                \node[state] (q_5) [right of= q_4] {$3,5$};
                \node[state,accepting] (q_6) [right of= q_5] {};
                \node[state] (q_7) [right of= q_6] {$3,3ab$};

                \path[->] (q_0) edge node {$a$} (q_1)
                          (q_1) edge node {$b$} (q_2)
                          (q_2) edge node {$a$} (q_3)
                          (q_3) edge node [swap] {$b|3ab=2$} (q_4)
                          (q_3) edge node [inner sep=-4pt] {$b|3ab\neq 2$} (q_7)
                          (q_4) edge node [swap] {$b^2$} (q_5)
                          (q_5) edge node [swap] {$a$} (q_6)
                          (q_7) edge node {$a$} (q_6);

                \draw [->] (-.5,.5) -- (-.27,.27);

            \end{tikzpicture}}}%
            \caption[]{P3MSA for $3$-compressible automata with $b=(13)\pi$ and $3a\neq 3$.}%
            \label{fig:caso:3-p:r:1}%
          \end{figure}
    \end{enumerate}
  \item Let $|Orb_b(1)|=3$, then there are three further subcases:
    \begin{enumerate}
      \item if $b=(123)\pi$, then $\{2,3\}a\neq\{2,3\}$ and in Fig. \ref{fig:caso:3-p:r:2:1} we draw a P3MSA for this case, proving that either the word $ab^2ab^2a$ or $ab^2a^2b^2a$ or $ab^2abab^2a$ $3$-compresses $\A$.
      \item if $b=(132)\pi$, then $\{2,3\}a\neq\{2,3\}$ and in Fig. \ref{fig:caso:3-p:r:2:2} we draw a P3MSA for this case, proving that either the word $aba^2ba$ or $ababa$ or $abab^2aba$ $3$-compresses $\A$.
      \item if $b=(134)\pi$, then in Fig. \ref{fig:caso:3-p:r:2:3} we draw a P3MSA for this case, proving that either the word $abab^2a$ or $ababa$ or $ab^2ab^2a$ or $ab^2aba$ $3$-compresses $\A$.
          \begin{figure}[ht]%
            \centering
            {\footnotesize
            \begin{tikzpicture}[shorten >=1pt,node distance=5em,auto]

                \node[state] (q_0) {};
                \node[state] (q_1) [right of= q_0] {$1$};
                \node[state] (q_2) [right of= q_1] {$2$};
                \node[state] (q_3) [right of= q_2] {$3$};
                \node[state] (q_4) [right of= q_3] {$1,3a$};
                \node[state,draw=none] (q_5) [right of= q_4] {};
                \node[state] (q_6) [right of= q_5] {$1,2a$};
                \node[state] (q_7) [right of= q_6] {$2,2ab$};
                \node[state] (q_8) [right of= q_7] {$3,2ab^2$};
                \node[state] (q_9) [below of= q_2] {$2,3ab$};
                \node[state] (q_10) [right of= q_9] {$3,3ab^2$};
                \node[state] (q_11) [right of= q_10] {$1,2$};
                \node[state] (q_12) [right of= q_11] {$1,2a$};
                \node[state] (q_13) [right of= q_12] {$2,2ab$};
                \node[state] (q_14) [right of= q_13] {$3,2ab^2$};
                \node[state,accepting] (q_15) [right of= q_14] {};

                \path[->] (q_0) edge node {$a$} (q_1)
                          (q_1) edge node {$b$} (q_2)
                          (q_2) edge node {$b$} (q_3)
                          (q_3) edge node {$a$} (q_4)
                          (q_4) edge node {$a|3a=2$} (q_6)
                          (q_4) edge node {$b|3a=3$} (q_11)
                          (q_4) edge node [swap] {$b|3a\not\in\{2,3\}$} (q_9)
                          (q_6) edge node {$b$} (q_7)
                          (q_7) edge node {$b$} (q_8)
                          (q_8) edge node {$a$} (q_15)
                          (q_9) edge node {$b$} (q_10)
                          (q_10) edge [bend right=22] node {$a$} (q_15)
                          (q_11) edge node {$a$} (q_12)
                          (q_12) edge node {$b$} (q_13)
                          (q_13) edge node {$b$} (q_14)
                          (q_14) edge node {$a$} (q_15);

                \draw [->] (-.5,.5) -- (-.27,.27);

            \end{tikzpicture}}\\
            \caption[]{P3MSA for 3-compressible automata with $b=(123)\pi$ and $\{2,3\}a\neq\{2,3\}$.}%
            \label{fig:caso:3-p:r:2:1}%
          \end{figure}

          \begin{figure}[ht]%
            \centering
            {\footnotesize
            \begin{tikzpicture}[shorten >=1pt,node distance=5em,auto]

                \node[state] (q_0) {};
                \node[state] (q_1) [right of= q_0] {$1$};
                \node[state] (q_2) [right of= q_1] {$3$};
                \node[state] (q_3) [right of= q_2] {$1,3a$};
                \node[state,draw=none] (q_4) [right of= q_3] {};
                \node[state] (q_5) [right of= q_4] {$1,2a$};
                \node[state] (q_6) [right of= q_5] {$3,2ab$};
                \node[state] (q_9) [below of= q_2] {$3,3ab$};
                \node[state] (q_10) [right of= q_9] {$2,3$};
                \node[state] (q_11) [right of= q_10] {$1,2$};
                \node[state] (q_12) [right of= q_11] {$1,2a$};
                \node[state] (q_13) [right of= q_12] {$3,2ab$};
                \node[state,accepting] (q_14) [right of= q_13] {};

                \path[->] (q_0) edge node {$a$} (q_1)
                          (q_1) edge node {$b$} (q_2)
                          (q_2) edge node {$a$} (q_3)
                          (q_3) edge node {$a|3a=2$} (q_5)
                          (q_3) edge node {$b|3a=3$} (q_10)
                          (q_3) edge node [swap] {$b|3a\not\in\{2,3\}$} (q_9)
                          (q_5) edge node {$b$} (q_6)
                          (q_6) edge node {$a$} (q_14)
                          (q_9) edge [bend right=22] node {$a$} (q_14)
                          (q_10) edge node {$b$} (q_11)
                          (q_11) edge node {$a$} (q_12)
                          (q_12) edge node {$b$} (q_13)
                          (q_13) edge node {$a$} (q_14);

                \draw [->] (-.5,.5) -- (-.27,.27);

            \end{tikzpicture}}\\
            \caption[]{P3MSA for 3-compressible automata with $b=(132)\pi$ and $\{2,3\}a\neq\{2,3\}$.}%
            \label{fig:caso:3-p:r:2:2}%
          \end{figure}

          \begin{figure}[ht]%
            \centering
            {\footnotesize
            \begin{tikzpicture}[shorten >=1pt,node distance=5em,auto]

                \node[state] (q_0) {};
                \node[state] (q_1) [right of= q_0] {$1$};
                \node[state] (q_2) [right of= q_1] {$3$};
                \node[state,draw=none] (q_3) [right of= q_2] {};
                \node[state] (q_4) [right of= q_3] {$1,3a$};
                \node[state] (q_5) [right of= q_4] {$3,3ab$};
                \node[state,draw=none] (q_55) [right of= q_5] {};
                \node[state] (q_6) [right of= q_55] {$3,4$};
                \node[state] (q_7) [below of= q_2] {$4$};
                \node[state] (q_8) [right of= q_7] {$1,4a$};
                \node[state] (q_9) [right of= q_8] {$3,4ab$};
                \node[state,draw=none] (q_10) [right of= q_9] {};
                \node[state] (q_11) [right of= q_10] {$3,4$};
                \node[state,accepting] (q_12) [right of= q_11] {};

                \path[->] (q_0) edge node {$a$} (q_1)
                          (q_1) edge node {$b$} (q_2)
                          (q_2) edge node {$a|3ab\neq 2$} (q_4)
                          (q_2) edge node [swap,inner sep=-4pt] {$b|3ab=2$} (q_7)
                          (q_4) edge node {$b$} (q_5)
                          (q_5) edge node {$b|3a=4$} (q_6)
                          (q_5) edge node {$a|3a\neq 4$} (q_12)
                          (q_6) edge node {$a$} (q_12)
                          (q_7) edge node {$a$} (q_8)
                          (q_8) edge node {$b$} (q_9)
                          (q_9) edge node {$b|4ab=1$} (q_11)
                          (q_9) edge [bend right=22] node [swap] {$a|4ab\neq 1$} (q_12)
                          (q_11) edge node {$a$} (q_12);

                \draw [->] (-.5,.5) -- (-.27,.27);

            \end{tikzpicture}}\\
            \caption[]{P3MSA for 3-compressible automata with $b=(134)\pi$.}%
            \label{fig:caso:3-p:r:2:3}%
          \end{figure}
    \end{enumerate}
  \item Let $|Orb_b(1)|=4$, then there are three further subcases:
    \begin{enumerate}
      \item if $b=(1234)\pi$, then in Fig. \ref{fig:caso:3-p:r:3:1} we draw a P3MSA for this case, proving that either the word $ab^2ab^2a$ or $ab^3ab^3a$ or $ab^2abab^2a$ or $ab^2a^2b^2a$ $3$-compresses $\A$.
          Indeed, observe that:
          \begin{enumerate}
            \item if $3a\not\in\{3,4\}$, then $3ab^2=1$ implies $3a=3$, and $3ab^2=2$ implies $3a=4$, and both of them are contradictions;
            \item if $3a=4$ and $4a=3$, then $2ab^2=1$ implies $2a=3$, and $2ab^2=2$ implies $2a=4$, both contradictions;
            \item if $3a=3$ and $4a\neq 4$ or if $3a=4$ and $4a\neq 3$, then $4ab^2=1$ implies $4a=3$, and $4ab^2=2$ implies $4a=4$, both contradictions.
          \end{enumerate}
          \begin{figure}[ht]%
            \centering
            {\footnotesize
            \begin{tikzpicture}[shorten >=1pt,node distance=5em,auto]

                \node[state] (q_0) {};
                \node[state] (q_1) [right of= q_0] {$1$};
                \node[state] (q_2) [right of= q_1] {$2$};
                \node[state] (q_3) [right of= q_2] {$3$};
                \node[state,draw=none] (q_4) [right of= q_3] {};
                \node[state] (q_5) [right of= q_4] {$1,3a$};
                \node[state] (q_6) [right of= q_5] {$2,3ab$};
                \node[state] (q_7) [right of= q_6] {$3,3ab^2$};
                \node[state] (q_8) [below of= q_4] {$4$};
                \node[state] (q_9) [right of= q_8] {$1,4$};
                \node[state] (q_10) [right of= q_9] {$1,2$};
                \node[state] (q_11) [right of= q_10] {$2,3$};
                \node[state] (q_12) [right of= q_11] {$3,4$};
                \node[state] (q_13) [below of= q_8] {$1,3$};
                \node[state] (q_14) [right of= q_13] {$2,4$};
                \node[state] (q_15) [right of= q_14] {$1,4a$};
                \node[state] (q_16) [right of= q_15] {$2,4ab$};
                \node[state] (q_17) [right of= q_16] {$3,4ab^2$};
                \node[state] (q_23) [above of= q_4] {$1,4$};
                \node[state] (q_24) [right of= q_23] {$1,4a$};
                \node[state] (q_25) [right of= q_24] {$2,4ab$};
                \node[state] (q_27) [right of= q_25] {$3,4ab^2$};
                \node[state] (q_18) [above of= q_23] {$1,4$};
                \node[state] (q_19) [right of= q_18] {$1,2$};
                \node[state] (q_20) [right of= q_19] {$1,2a$};
                \node[state] (q_21) [right of= q_20] {$2,2ab$};
                \node[state] (q_22) [right of= q_21] {$3,2ab^2$};
                \node[state,draw=none] (q_77) [right of= q_7] {};
                \node[state,accepting] (q_26) [right of= q_77] {};

                \path[->] (q_0) edge node {$a$} (q_1)
                          (q_1) edge node {$b$} (q_2)
                          (q_2) edge node {$b$} (q_3)
                          (q_3) edge node {$a|3a\not\in\{3,4\}$} (q_5)
                          (q_3) edge node {$b|3a=3,4a=4$} (q_8)
                          (q_3) edge node {$a|3a=4,4a=3$} (q_18)
                          (q_3) edge node [swap] {$a|3a=3,4a\neq 4$} (q_13)
                          (q_3) edge node [swap,inner sep=-2pt] {$a|3a=4,4a\neq 3$} (q_23)
                          (q_5) edge node {$b$} (q_6)
                          (q_6) edge node {$b$} (q_7)
                          (q_7) edge node {$a$} (q_26)
                          (q_8) edge node {$a$} (q_9)
                          (q_9) edge node {$b$} (q_10)
                          (q_10) edge node {$b$} (q_11)
                          (q_11) edge node {$b$} (q_12)
                          (q_12) edge node {$a$} (q_26)
                          (q_13) edge node {$b$} (q_14)
                          (q_14) edge node {$a$} (q_15)
                          (q_15) edge node {$b$} (q_16)
                          (q_16) edge node {$b$} (q_17)
                          (q_17) edge node {$a$} (q_26)
                          (q_18) edge node {$b$} (q_19)
                          (q_19) edge node {$a$} (q_20)
                          (q_20) edge node {$b$} (q_21)
                          (q_21) edge node {$b$} (q_22)
                          (q_22) edge node {$a$} (q_26)
                          (q_23) edge node {$a$} (q_24)
                          (q_24) edge node {$b$} (q_25)
                          (q_25) edge node {$b$} (q_27)
                          (q_27) edge node {$a$} (q_26);

                \draw [->] (-.5,.5) -- (-.27,.27);

            \end{tikzpicture}}\\
            \caption[]{P3MSA for the case $b=(1234)\pi$.}%
            \label{fig:caso:3-p:r:3:1}%
          \end{figure}
      \item if $b=(1324)\pi$, then $\{3,4\}a\neq\{3,4\}$ and in Fig. \ref{fig:caso:3-p:r:3:2} we draw a P3MSA for this case, proving that either the word $ababa$ or $ab^3aba$ $3$-compresses $\A$.
          Observe that if $3ab=1$, then $3a=4$, $4a\neq 3$, and so $4ab\not\in\{1,2\}$, else if $3ab=2$, then $3a=3$, $4a\not\in\{3,4\}$, and again $4ab\not\in\{1,2\}$.
          \begin{figure}[ht]%
            \centering
            {\footnotesize
            \begin{tikzpicture}[shorten >=1pt,node distance=5em,auto]

                \node[state] (q_0) {};
                \node[state] (q_1) [right=2em of q_0] {$1$};
                \node[state] (q_2) [right=2em of q_1] {$3$};
                \node[state,draw=none] (q_3) [right=2.2em of q_2] {};
                \node[state] (q_4) [right=2em of q_3] {$1,3a$};
                \node[state] (q_5) [right=2em of q_4] {$3,3ab$};
                \node[state,accepting] (q_6) [right=2em of q_5] {};
                \node[state] (q_10) [right=2em of q_6] {$3,4ab$};
                \node[state] (q_9) [right=2em of q_10] {$1,4a$};
                \node[state] (q_8) [right=2em of q_9] {$4$};
                \node[state] (q_7) [right=2em of q_8] {$2$};

                \path[->] (q_0) edge node {$a$} (q_1)
                          (q_1) edge node {$b$} (q_2)
                          (q_2) edge node {$a|3ab\not\in\{1,2\}$} (q_4)
                          (q_2) edge [bend right=16] node [swap] {$b|3ab\in\{1,2\}$} (q_7)
                          (q_4) edge node {$b$} (q_5)
                          (q_5) edge node {$a$} (q_6)
                          (q_7) edge node [swap] {$b$} (q_8)
                          (q_8) edge node [swap] {$a$} (q_9)
                          (q_9) edge node [swap] {$b$} (q_10)
                          (q_10) edge node [swap] {$a$} (q_6);

                \draw [->] (-.5,.5) -- (-.27,.27);

            \end{tikzpicture}}\\
            \caption[]{P3MSA for the case $b=(1324)\pi$ and $\{3,4\}a\neq\{3,4\}$.}%
            \label{fig:caso:3-p:r:3:2}%
          \end{figure}
    \end{enumerate}
  \item Let $|Orb_b(1)|\geq 5$, then there are three further subcases:
    \begin{enumerate}
      \item if $b=(12345\ldots)\pi$, then in Fig. \ref{fig:caso:3-p:r:4:1} we draw a P3MSA for this case, proving that either the word $ab^2ab^2a$ or $ab^2abab^2a$ or $ab^2a^2b^2a$ $3$-compresses $\A$.
          Note that if $3ab^2=1$ and $3a^2b^2=2$, then $3abab^2\not\in\{1,2\}$.
          In fact if $3abab^2=1$, then $3a=3aba$, hence $3a=2$ and $3ab^2=4$, a contradiction.
          If $3abab^2=2$, then $3aba=3a^2$, hence $3ab=3a$ and $3ab^2=3ab=1$, again a contradiction.
          Similarly, if $3ab^2=2$ and $4ab^3=2$, then $3abab^2\not\in\{1,2\}$; in fact if $3abab^2=1$ then $3ab^2ab^2=4ab^2$ and $3ab^2=4$, else if $3abab^2=2$ then $3ab^2=3abab^2$ hence $3ab=3$ and $3ab^2=4$, and in both cases this is a contradiction.
          \begin{figure}[ht]%
            \centering
            {\footnotesize
            \begin{tikzpicture}[shorten >=1pt,node distance=5em,auto]

                \node[state] (q_00) {};
                \node[state] (q_0) [right of= q_00] {$1$};
                \node[state] (q_1) [right of= q_0] {$2$};
                \node[state] (q_2) [right of= q_1] {$3$};
                \node[state] (q_3) [right of= q_2] {$1,3a$};
                \node[state,draw=none] (q_4) [right of= q_3] {};
                \node[state] (q_5) [right of= q_4] {$2,3ab$};
                \node[state] (q_6) [right of= q_5] {$3,3ab^2$};
                \node[state] (q_8) [below of= q_3] {$1,3a^2$};
                \node[state] (q_9) [right of= q_8] {$2,3a^2b$};
                \node[state,inner sep=-4pt] (q_10) [right of= q_9] {{\scriptsize $\begin{array}{c}3, \\3a^2b^2\end{array}$}};
                \node[state] (q_11) [below of= q_00] {$2,3ab$};
                \node[state] (q_12) [right of= q_11] {$1,3aba$};
                \node[state,inner sep=-4pt] (q_13) [right of= q_12] {{\scriptsize $\begin{array}{c}2, \\3abab\end{array}$}};
                \node[state,inner sep=-4pt] (q_14) [right of= q_13] {{\scriptsize $\begin{array}{c}3, \\3abab^2\end{array}$}};
                \node[state,accepting] (q_7) [right of= q_10] {};

                \path[->] (q_00) edge node {$a$} (q_0)
                          (q_0) edge node {$b$} (q_1)
                          (q_1) edge node {$b$} (q_2)
                          (q_2) edge node {$a$} (q_3)
                          (q_3) edge node {$b|3ab^2\not\in\{1,2\}$} (q_5)
                          (q_5) edge node {$b$} (q_6)
                          (q_6) edge node {$a$} (q_7)
                          (q_3) edge node [inner sep=-15pt] {$a|3ab^2\in\{1,2\},3a^2b^2\not\in\{1,2\}$}  (q_8)
                          (q_8) edge node {$b$} (q_9)
                          (q_9) edge node {$b$} (q_10)
                          (q_10) edge node {$a$} (q_7)
                          (q_3) edge [in=30,out=220,inner sep=5pt] node [swap] {$b|\{3ab^2,3a^2b^2\}=\{1,2\}$} (q_11)
                          (q_11) edge  node {$a$} (q_12)
                          (q_12) edge node {$b$} (q_13)
                          (q_13) edge node {$b$} (q_14)
                          (q_14) edge [bend right=20] node [swap] {$a$} (q_7);

                \draw [->] (-.5,.5) -- (-.27,.27);

            \end{tikzpicture}}\\
            \caption[]{P3MSA for the case $b=(12345\ldots)\pi$.}%
            \label{fig:caso:3-p:r:4:1}%
          \end{figure}
          \FloatBarrier
      \item if $b=(13245\ldots)\pi$, then in Fig. \ref{fig:caso:3-p:r:4:2} we draw a P3MSA for this case, proving that either the word $ab^3ab^3a$ or $ab^3aba$ or $abab^3a$ or $ababa$ $3$-compresses $\A$.
          \begin{figure}[ht]%
            \centering
            {\footnotesize
            \begin{tikzpicture}[shorten >=1pt,node distance=5em,auto]

                \node[state] (q_0) {};
                \node[state] (q_1) [right of= q_0] {$1$};
                \node[state] (q_2) [right of= q_1] {$3$};
                \node[state,draw=none] (q_3) [right of= q_2] {};
                \node[state] (q_4) [right of= q_3] {$2$};
                \node[state] (q_5) [right of= q_4] {$4$};
                \node[state] (q_6) [right of= q_5] {$1,4a$};
                \node[state] (q_7) [right of= q_6] {$3,4ab$};
                \node[state,draw=none] (q_8) [right of= q_7] {};
                \node[state] (q_9) [right of= q_8] {$2,4$};
                \node[state] (q_10) [below of= q_2] {$1,3a$};
                \node[state] (q_11) [right of= q_10] {$3,3ab$};
                \node[state] (q_12) [below of= q_5] {$2,4$};
                \node[state] (q_15) [right of= q_12] {$4,5$};
                \node[state,accepting] (q_13) [below of= q_7] {};
                \node[state] (q_14) [right of= q_13] {$4,5$};

                \path[->] (q_0) edge node {$a$} (q_1)
                          (q_1) edge node {$b$} (q_2)
                          (q_2) edge node {$b|3ab=1$} (q_4)
                          (q_2) edge node {$a|3ab\neq 1$} (q_10)
                          (q_4) edge node {$b$} (q_5)
                          (q_5) edge node {$a$} (q_6)
                          (q_6) edge node {$b$} (q_7)
                          (q_7) edge node {$b|4ab=2$} (q_9)
                          (q_7) edge node {$a|4ab\neq 2$} (q_13)
                          (q_9) edge node {$b$} (q_14)
                          (q_10) edge node {$b$} (q_11)
                          (q_11) edge node {$b|3ab=2$} (q_12)
                          (q_11) edge [bend right=30] node {$a|3ab\neq 2$} (q_13)
                          (q_12) edge node {$b$} (q_15)
                          (q_15) edge node {$a$} (q_13)
                          (q_14) edge node [swap] {$a$} (q_13);

                \draw [->] (-.5,.5) -- (-.27,.27);

            \end{tikzpicture}}\\
            \caption[]{3MSA for the case $b=(13245\ldots)\pi$.}%
            \label{fig:caso:3-p:r:4:2}%
          \end{figure}
          \FloatBarrier
      \item if $b=(13425\ldots)\pi$, then $\M(abab)=\{3,3ab\}$. If $3ab\not\in\{1,2\}$, then the word $ababa$ $3$-compresses $\A$, else if $3ab=1$, then $\M(abab^2)=\{3,4\}$, else if $3ab=2$, then $\M(abab^2)=\{4,5\}$ and in both cases the word $abab^2a$ $3$-compresses $\A$.
    \end{enumerate}
\end{enumerate}
\FloatBarrier
\end{proof}


\begin{proposition}\label{proposition:4-p}
Let $\A$ be a $(\bf{4},\bf{p})$-automaton with $a=[1,2]\backslash 3, 3a=1$. Then $\A$ is $3$-compressible (and proper) if, and only if, the following conditions hold:
 \begin{enumerate}
   \item $\{1,3\}b\neq\{1,3\}$ and
   \item $b\neq(12)(3)\pi$ and
   \item if $b=(1)(23)\pi$, or $b=(123)\pi$, or $b=(132)\pi$, then  $2a\neq 2$ and
   \item if $b=(1)(2)(34)\pi$, or $b=(12)(34)\pi$, or $b=(14)(23)\pi$, or $b=(1423)\pi$, or $b=(1324)\pi$, then $4a\neq 2$.
 \end{enumerate}
Moreover, if $\mathcal{A}$ is $3$-compressible (and proper), then one of the words $a^2ba^2$ or $a^2b^2a^2$ or $a^2b^3a$ or $a^2baba^2$ or $ab^3ab^3a$ $3$-compresses $\A$.
\end{proposition}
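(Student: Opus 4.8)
The plan is to follow the template of Proposition~\ref{proposition:3-p}: prove necessity of conditions 1--4 by exhibiting, for each way in which a condition can fail, a \emph{finite} 3MSA all of whose reachable states are missing sets of size at most $2$, and prove sufficiency by a case analysis on the cycle structure of $b$ around $\{1,2,3\}$, producing in each case a P3MSA that spells one of the five listed words. Since every $3$-compressing word for a $(\mathbf{4},\mathbf{p})$-automaton must contain at least three non-consecutive occurrences of $a$ (the remark preceding the statement), $3$-compressibility already forces properness here, so only compressibility has to be discussed.

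The first step is a structural observation about $a$ that drives everything. As $a=[1,2]\backslash 3$ has deficiency $1$, exactly one value is covered twice, namely $v:=1a=2a$; if $v=1$ then $\{1,2\}$ would be the entire preimage of $1$, contradicting $3a=1$, and $v=3$ is impossible since $3\notin\Imm(a)$. Hence $3$ is the unique $a$-preimage of $1$, so $\M(a^2)=\{1,3\}$, and a direct computation gives $\M(\{1,3\},a)=\{1,3\}$ (the image $1=3a$ is re-exposed, while $v=1a=2a$ stays covered by $2$). Thus iterating $a$ alone saturates the deficiency at $2$ on the set $\{1,3\}$, which is exactly why all but one of the target words begin with the block $a^2$: it is the cheapest way to reach the missing set $\{1,3\}$, after which $b$ is indispensable to expose a third state. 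This is the principal difference from the type-$\mathbf{3}$ analysis, where $a^2$ gains nothing.

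For necessity I would argue condition by condition. If $\{1,3\}b=\{1,3\}$ (condition 1 fails), then $b$ fixes or swaps $1$ and $3$, and together with $\M(\{1,3\},a)=\{1,3\}$ and $\M(\{1\},a)=\M(\{2\},a)=\{3\}$ the reachable missing sets stay among the size-$\le 2$ sets $\varnothing,\{1\},\{3\},\{1,3\}$, never reaching size $3$. The case $b=(12)(3)\pi$ (condition 2) is analogous: the three pairs $\{1,3\},\{2,3\},\{1,2\}$ form an $a,b$-closed family of size-$2$ sets, because $\M(\{2,3\},a)=\{1,3\}$ and $v=2a$ is still covered from outside. For conditions 3 and 4 the extra hypotheses $2a\neq2$ and $4a\neq2$ are precisely what is needed: when, say, $b=(1)(23)\pi$ and $2a=2$ one checks that $\{1,2\},\{1,3\},\{2,3\}$ again close up under $a$ and $b$, so $\A$ is not $3$-compressible, and the analogous degenerate closures occur for the $3$-cycles and $4$-cycles listed in 3 and 4. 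Each such non-compressibility claim is certified by one finite 3MSA, drawn exactly as in Figures~\ref{fig:caso:3-p:d:1}--\ref{fig:caso:3-p:d:4}.

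For sufficiency, assuming 1--4, I would split on $|Orb_b(1)|$ and, when the cycle of $b$ through $1$ is short, on how $b$ acts on $3$ and $4$, mirroring the organisation of conditions 3 and 4. In every case the strategy is the same: run $a^2$ to deposit the hole set $\{1,3\}$, transport it with a suitable power $b^{j}$ so that the next $a$ collapses a genuinely new state, and read off the resulting word; conditions 1--4 guarantee that the transported pair is never again an $a,b$-invariant size-$2$ set, so the missing set does reach size $3$. Short cycles give $a^2ba^2$, $a^2b^2a^2$ and $a^2baba^2$, while longer cycles, where two or three repositioning letters are required to separate the images, give $a^2b^3a$; in the borderline cycle types where the block $a^2$ would immediately re-enter the saturated set $\{1,3\}$, a single $a$ is used instead, producing $ab^3ab^3a$. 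The main obstacle is exactly the bookkeeping behind conditions 3 and 4: these are the borderline cycle types in which a careless choice of word re-enters one of the degenerate closed families of size-$2$ sets, and one must verify that $2a\neq2$ and $4a\neq2$ indeed break every such cycle, which is where most of the casework, and the conditional branches in the P3MSA, concentrate.
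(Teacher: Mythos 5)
Your structural observations are correct and match the engine of the paper's argument: $v=1a=2a\notin\{1,3\}$, so $\M(a^2)=\{1,3\}$ and $\M(\{1,3\},a)=\{1,3\}$, which is exactly why the target words are built around the block $a^2$ and why an intervening $b$ is indispensable. The necessity direction is also organized the same way as in the paper (one finite 3MSA per failure mode), and your closure arguments for conditions 1 and 3 are essentially the ones the paper draws. One small slip there: for $b=(12)(3)\pi$ the reachable closed family is $\{\varnothing,\{3\},\{1,3\},\{2,3\}\}$, not $\{\{1,3\},\{2,3\},\{1,2\}\}$; the set $\{1,2\}$ is never reached, and it is not even stable under $a$ in general, since $\M(\{1,2\},a)=\{3,v\}$ (the pair $\{1,2\}$ is the full preimage of $v$, so $v$ \emph{is} exposed). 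The conclusion survives because $\{1,2\}$ is unreachable, but the family you name is not the certificate.

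The genuine gap is in the sufficiency direction, which is where nearly all of the content of this proposition lives. ``Transport the hole set with a suitable power $b^{j}$ so that the next $a$ collapses a genuinely new state'' is a description of a proof, not a proof: one has to verify, for each cycle type of $b$ on $\{1,2,3,4\}$, that the transported pair actually escapes every $a,b$-invariant family of $2$-sets, and this is where the delicate auxiliary facts are needed (e.g.\ for $b=(34)\pi$ that $1b=1$ forces $2b\neq 2$; for $b=(314\ldots)\pi$ that $4a=2$ and $4b=2$ force $b=(31425\ldots)\pi$ on pain of violating condition 4; for $b=(324\ldots)\pi$ that $1b\neq 3$ implies $1ba\notin\{1,2\}$ when $4a=2$). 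The paper's case split is also organized differently from yours — first on whether $Orb_b(1,3)\subseteq\{1,2,3\}$, then on $Orb_b(3)$ and the value of $3b\in\{1,2,4\}$ — and it is not evident that a split on $|Orb_b(1)|$ alone reaches all cases or yields the same five words; in particular the word $ab^3ab^3a$ arises from the specific configuration $b=(3214\ldots)\pi$ with $4a=2$, not from a generic ``borderline cycle type,'' and your heuristic for when to drop to a single leading $a$ would need to be checked against exactly that configuration. Until that casework is carried out, the ``only if'' half of the moreover-clause (that one of the five listed words always works) is unsupported.
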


\begin{proof}
Let $\A$ be a $(\bf{4},\bf{p})$-automaton that does not satisfy one of the conditions $1.-4.$, we prove that it is not $3$-compressible.

\begin{enumerate}
  \item Let condition $1.$ be false, i.e., $\{1,3\}b=\{1,3\}$, then the 3MSA in Fig. \ref{fig:prop:4-p:d:1:1} proves that $\A$ is not 3-compressible.
  \item Let condition $2.$ be false, i.e., $b=(12)(3)\pi$, then the 3MSA in Fig. \ref{fig:prop:4-p:d:1:2} proves that $\A$ is not 3-compressible.
      \begin{figure}[ht]%
        \centering
        \subfigure[][3MSA for the case $\{1,3\}b=\{1,3\}$.]{%
        \label{fig:prop:4-p:d:1:1}%
        {\footnotesize
        \begin{tikzpicture}[shorten >=1pt,node distance=5em,auto]

            \node[state] (q_0) {};
            \node[state] (q_1) [right of= q_0] {$3$};
            \node[state] (q_2) [right of= q_1] {$1,3$};
            \node[state] (q_3) [right of= q_2,draw=none] {};
            \node[state] (q_4) [right of= q_3] {$1,3$};
            \node[state] (q_5) [right of= q_4] {$1$};
            \node[state] (q_6) [below of= q_1] {$3$};
            \node[state] (q_7) [below of= q_2] {$1,3$};
            \node[state] (q_8) [below of= q_5] {$3$};

            \path[->] (q_0) edge node {$a$} (q_1)
                      (q_0) edge [loop above] node {$b$} (q_0)
                      (q_1) edge node {$a$} (q_2)
                      (q_1) edge [bend left=20] node {$b|3b=1$} (q_5)
                      (q_1) edge node [swap] {$b|3b=3$} (q_6)
                      (q_2) edge [max distance=5mm,in=215,out=245,loop] node {$a$} (q_2)
                      (q_2) edge node {$3b=3$} (q_7)
                      (q_2) edge node {$3b=1$} (q_4)
                      (q_4) edge [max distance=5mm,in=215,out=245,loop] node {$a,b$} (q_4)
                      (q_5) edge [bend left=10] node {$b$} (q_8)
                      (q_5) edge [loop above] node {$a$} (q_5)
                      (q_6) edge [loop left] node {$b$} (q_6)
                      (q_6) edge node {$a$} (q_7)
                      (q_7) edge [loop right] node {$a,b$} (q_7)
                      (q_8) edge node {$a$} (q_4)
                      (q_8) edge [bend left=10] node {$b$} (q_5);

            \draw [->] (-.5,.5) -- (-.27,.27);

        \end{tikzpicture}}%
        }
        \hspace{20pt}%
        \subfigure[][3MSA for the case $b=(12)(3)\pi$.]{%
        \label{fig:prop:4-p:d:1:2}%
        {\footnotesize
        \begin{tikzpicture}[shorten >=1pt,node distance=5em,auto]

            \node[state] (q_0) {};
            \node[state] (q_1) [right of= q_0] {$3$};
            \node[state] (q_2) [right of= q_1] {$1,3$};
            \node[state] (q_3) [right of= q_2] {$2,3$};
            \node[state,draw=none] (q_4) [below of= q_0] {};

            \path[->] (q_0) edge node {$a$} (q_1)
                      (q_0) edge [loop above] node {$b$} (q_0)
                      (q_1) edge [loop above] node {$b$} (q_1)
                      (q_1) edge node {$a$} (q_2)
                      (q_2) edge [bend left=10] node {$b$} (q_3)
                      (q_2) edge [loop above] node {$a$} (q_2)
                      (q_3) edge [bend left=10] node {$a,b$} (q_2);

            \draw [->] (-.5,.5) -- (-.27,.27);

        \end{tikzpicture}}}%
        \caption[]{3MSA for automata that do not satisfy conditions 1. or 2. of Proposition \ref{proposition:4-p}.}%
        \label{fig:prop:4-p:d:1}%
      \end{figure}
  \item Let condition $3.$ be false, i.e., $b=(1)(23)\pi$ or $b=(123)\pi$ or $b=(132)\pi$, and  $2a= 2$.
      Then the 3MSA in figures \ref{fig:prop:4-p:d:3:1}, \ref{fig:prop:4-p:d:3:2} and \ref{fig:prop:4-p:d:3:3} prove that $\A$ is not $3$-compressible.
      \begin{figure}[ht]%
        \centering
        \subfigure[][3MSA for the case $b=(1)(23)\pi$ and $2a=2$.]{%
        \label{fig:prop:4-p:d:3:1}%
        {\footnotesize
        \begin{tikzpicture}[shorten >=1pt,node distance=4.6em,auto]

            \node[state] (q_0) {};
            \node[state] (q_1) [right of= q_0] {$3$};
            \node[state] (q_2) [right of= q_1] {$1,3$};
            \node[state] (q_3) [right of= q_2] {$2,3$};
            \node[state] (q_4) [below of= q_1] {$2$};
            \node[state] (q_5) [right of= q_4] {$1,2$};
            \node[state,draw=none] (q_6) [right of= q_5] {};

            \path[->] (q_0) edge node {$a$} (q_1)
                      (q_0) edge [loop above] node {$b$} (q_0)
                      (q_1) edge node {$a$} (q_2)
                      (q_1) edge [bend left=10] node {$b$} (q_4)
                      (q_2) edge [loop above] node {$a$} (q_2)
                      (q_2) edge [bend left=10] node {$b$} (q_5)
                      (q_3) edge node [swap] {$a,b$} (q_2)
                      (q_4) edge [bend left=10] node {$a,b$} (q_1)
                      (q_5) edge [bend left=10] node {$b$} (q_2)
                      (q_5) edge node {$a$} (q_3)
                      (q_6) edge [bend left=30,draw=none] node {\phantom{$a,b$}} (q_4);

            \draw [->] (-.5,.5) -- (-.27,.27);

        \end{tikzpicture}}%
        }
        \hspace{5pt}%
        \subfigure[][3MSA for the case $b=(123)\pi$ and $2a=2$.]{%
        \label{fig:prop:4-p:d:3:2}%
        {\footnotesize
        \begin{tikzpicture}[shorten >=1pt,node distance=4.6em,auto]

            \node[state] (q_0) {};
            \node[state] (q_1) [right of= q_0] {$3$};
            \node[state] (q_2) [right of= q_1] {$1$};
            \node[state] (q_3) [right of= q_2] {$2$};
            \node[state] (q_4) [below of= q_1] {$1,3$};
            \node[state] (q_5) [right of= q_4] {$1,2$};
            \node[state] (q_6) [right of= q_5] {$2,3$};

            \path[->] (q_0) edge node {$a$} (q_1)
                      (q_0) edge [loop above] node {$b$} (q_0)
                      (q_1) edge node {$a$} (q_4)
                      (q_1) edge [bend left=10] node {$b$} (q_2)
                      (q_2) edge node {$b$} (q_3)
                      (q_2) edge [bend left=10] node {$a$} (q_1)
                      (q_3) edge [bend left=30] node {$a,b$} (q_1)
                      (q_4) edge [loop left] node {$a$} (q_4)
                      (q_4) edge node {$b$} (q_5)
                      (q_5) edge node {$a,b$} (q_6)
                      (q_6) edge [bend left=30] node {$a,b$} (q_4);

            \draw [->] (-.5,.5) -- (-.27,.27);

        \end{tikzpicture}}}%
        \hspace{5pt}%
        \subfigure[][3MSA for the case $b=(132)\pi$ and $2a=2$.]{%
        \label{fig:prop:4-p:d:3:3}%
        {\footnotesize
        \begin{tikzpicture}[shorten >=1pt,node distance=4.6em,auto]

            \node[state] (q_0) {};
            \node[state] (q_1) [right of= q_0] {$3$};
            \node[state] (q_2) [right of= q_1] {$2$};
            \node[state] (q_3) [right of= q_2] {$1$};
            \node[state] (q_4) [below of= q_1] {$1,3$};
            \node[state] (q_5) [right of= q_4] {$2,3$};
            \node[state] (q_6) [right of= q_5] {$1,2$};

            \path[->] (q_0) edge node {$a$} (q_1)
                      (q_0) edge [loop above] node {$b$} (q_0)
                      (q_1) edge node {$a$} (q_4)
                      (q_1) edge [bend left=10] node {$b$} (q_2)
                      (q_2) edge node {$b$} (q_3)
                      (q_2) edge [bend left=10] node {$a$} (q_1)
                      (q_3) edge [bend left=30] node {$a,b$} (q_1)
                      (q_4) edge [loop left] node {$a$} (q_4)
                      (q_4) edge [bend left=10] node {$b$} (q_5)
                      (q_5) edge [bend left=10] node {$b$} (q_6)
                      (q_5) edge [bend left=10] node {$a$} (q_4)
                      (q_6) edge [bend left=10] node {$a$} (q_5)
                      (q_6) edge [bend left=30] node {$b$} (q_4)
                      (q_6) edge [bend left=30,draw=none] node {\phantom{$a,b$}} (q_4);

            \draw [->] (-.5,.5) -- (-.27,.27);

        \end{tikzpicture}}}%
        \caption[]{3MSA for automata that do not satisfy condition 3. of Proposition \ref{proposition:4-p}.}%
        \label{fig:prop:4-p:d:3}%
        \end{figure}
  \item Let condition $4.$ be false, i.e., $b=(1)(2)(34)\pi$ or $b=(12)(34)\pi$ or $b=(14)(23)\pi$ or $b=(1423)\pi$ or $b=(1324)\pi$, and $4a= 2$.
      Then the 3MSA in figures \ref{fig:prop:4-p:d:4:1}, \ref{fig:prop:4-p:d:4:3}, \ref{fig:prop:4-p:d:4:4} and \ref{fig:prop:4-p:d:4:5} prove that $\A$ is not $3$-compressible.
      \begin{figure}[ht]%
        \centering
        \subfigure[][3MSA for the case $b=(1)(2)(34)\pi$ or $b=(12)(34)\pi$, and  $4a=2$.]{%
        \label{fig:prop:4-p:d:4:1}%
        {\footnotesize
        \begin{tikzpicture}[shorten >=1pt,node distance=5em,auto]

            \node[state] (q_0) {};
            \node[state] (q_1) [right of= q_0] {$3$};
            \node[state] (q_2) [right of= q_1] {$4$};
            \node[state] (q_3) [right of= q_2] {$2,3$};
            \node[state] (q_4) [below of= q_1] {$1,3$};
            \node[state] (q_5) [right of= q_4] {$1b,4$};
            \node[state] (q_6) [right of= q_5] {$2b,4$};

            \path[->] (q_0) edge node {$a$} (q_1)
                      (q_0) edge [loop above] node {$b$} (q_0)
                      (q_1) edge node {$a$} (q_4)
                      (q_1) edge [bend left=10] node {$b$} (q_2)
                      (q_2) edge node {$a$} (q_3)
                      (q_2) edge [bend left=10] node {$b$} (q_1)
                      (q_3) edge [bend left=10] node {$b$} (q_6)
                      (q_3) edge [bend right=5] node [swap] {$a$} (q_4)
                      (q_4) edge [loop left] node {$a$} (q_4)
                      (q_4) edge [bend left=10] node {$b$} (q_5)
                      (q_5) edge [bend left=10] node {$b$} (q_4)
                      (q_5) edge node {$a$} (q_3)
                      (q_6) edge [bend left=10] node {$a,b$} (q_3);

            \draw [->] (-.5,.5) -- (-.27,.27);

        \end{tikzpicture}}%
        }
        \hspace{20pt}%
        \subfigure[][3MSA for the case $b=(14)(23)\pi$ and $4a=2$.]{%
        \label{fig:prop:4-p:d:4:3}%
        {\footnotesize
        \begin{tikzpicture}[shorten >=1pt,node distance=5em,auto]

            \node[state] (q_0) {};
            \node[state] (q_1) [right of= q_0] {$3$};
            \node[state] (q_2) [right of= q_1] {$1,3$};
            \node[state] (q_3) [right of= q_2] {$2,3$};
            \node[state] (q_4) [below of= q_1] {$2$};
            \node[state] (q_5) [right of= q_4] {$2,4$};

            \path[->] (q_0) edge node {$a$} (q_1)
                      (q_0) edge [loop above] node {$b$} (q_0)
                      (q_1) edge node {$a$} (q_2)
                      (q_1) edge [bend left=10] node {$b$} (q_4)
                      (q_2) edge [loop above] node {$a$} (q_2)
                      (q_2) edge [bend left=10] node {$b$} (q_5)
                      (q_3) edge node [swap] {$a$} (q_2)
                      (q_3) edge [loop above] node {$b$} (q_3)
                      (q_4) edge [bend left=10] node {$a,b$} (q_1)
                      (q_5) edge [bend left=10] node {$b$} (q_2)
                      (q_5) edge node {$a$} (q_3);

            \draw [->] (-.5,.5) -- (-.27,.27);

        \end{tikzpicture}}}%
        \\
        \subfigure[][3MSA for the case $b=(1423)\pi$ and $4a=2$.]{%
        \label{fig:prop:4-p:d:4:4}%
        {\footnotesize
        \begin{tikzpicture}[shorten >=1pt,node distance=5em,auto]

            \node[state] (q_0) {};
            \node[state] (q_1) [right of= q_0] {$3$};
            \node[state] (q_2) [right of= q_1] {$1$};
            \node[state] (q_3) [right of= q_2] {$4$};
            \node[state] (q_4) [right of= q_3] {$2$};
            \node[state] (q_5) [below of= q_1] {$1,3$};
            \node[state] (q_6) [right of= q_5] {$1,4$};
            \node[state] (q_7) [right of= q_6] {$2,4$};
            \node[state] (q_8) [right of= q_7] {$2,3$};

            \path[->] (q_0) edge node {$a$} (q_1)
                      (q_0) edge [loop above] node {$b$} (q_0)
                      (q_1) edge node {$a$} (q_5)
                      (q_1) edge [bend left=10] node {$b$} (q_2)
                      (q_2) edge node {$b$} (q_3)
                      (q_2) edge [bend left=10] node {$a$} (q_1)
                      (q_3) edge node {$b$} (q_4)
                      (q_3) edge node {$a$} (q_8)
                      (q_4) edge [bend right=30] node [swap] {$a,b$} (q_1)
                      (q_5) edge [loop left] node {$a$} (q_5)
                      (q_5) edge node {$b$} (q_6)
                      (q_6) edge node {$b$} (q_7)
                      (q_6) edge [bend right=30] node [swap] {$a$} (q_8)
                      (q_7) edge node {$a,b$} (q_8)
                      (q_8) edge [bend right=30] node [swap] {$a,b$} (q_5)
                      (q_8) edge [bend left=30,draw=none] node {\phantom{$b$}} (q_5);

            \draw [->] (-.5,.5) -- (-.27,.27);

        \end{tikzpicture}}%
        }
        \hspace{20pt}%
        \subfigure[][3MSA for the case $b=(1324)\pi$ and $4a=2$.]{%
        \label{fig:prop:4-p:d:4:5}%
        {\footnotesize
        \begin{tikzpicture}[shorten >=1pt,node distance=5em,auto]

            \node[state] (q_0) {};
            \node[state] (q_1) [right of= q_0] {$3$};
            \node[state] (q_2) [right of= q_1] {$2$};
            \node[state] (q_3) [right of= q_2] {$4$};
            \node[state] (q_4) [right of= q_3] {$1$};
            \node[state] (q_5) [below of= q_1] {$1,3$};
            \node[state] (q_6) [right of= q_5] {$2,3$};
            \node[state] (q_7) [right of= q_6] {$2,4$};
            \node[state] (q_8) [right of= q_7] {$1,4$};

            \path[->] (q_0) edge node {$a$} (q_1)
                      (q_0) edge [loop above] node {$b$} (q_0)
                      (q_1) edge node {$a$} (q_5)
                      (q_1) edge [bend left=10] node {$b$} (q_2)
                      (q_2) edge node {$b$} (q_3)
                      (q_2) edge [bend left=10] node {$a$} (q_1)
                      (q_3) edge node {$b$} (q_4)
                      (q_3) edge node [swap] {$a$} (q_6)
                      (q_4) edge [bend right=30] node [swap] {$a,b$} (q_1)
                      (q_5) edge [loop left] node {$a$} (q_5)
                      (q_5) edge [bend left=10] node {$b$} (q_6)
                      (q_6) edge [bend left=10] node {$a$} (q_5)
                      (q_6) edge [bend left=10] node {$b$} (q_7)
                      (q_7) edge node {$b$} (q_8)
                      (q_7) edge [bend left=10] node {$a$} (q_6)
                      (q_8) edge [bend right=30] node [swap] {$a$} (q_6)
                      (q_8) edge [bend left=30] node {$b$} (q_5);

            \draw [->] (-.5,.5) -- (-.27,.27);

        \end{tikzpicture}}}%
        \caption[]{3MSA for automata that do not satisfy condition $4.$ of Proposition \ref{proposition:4-p}.}%
        \label{fig:prop:4-p:d:4-1-2-3}%
      \end{figure}
\end{enumerate}

\noindent Conversely, let $\A$ be an automaton satisfying conditions $1.-4.$.

If $Orb_b(1,3)\subseteq\{1,2,3\}$, then $Orb_b(1,3)=\{1,2,3\}$, as $\{1,3\}b\neq\{1,3\}$.
There are two subcases:
    \begin{enumerate}
        \item if $b=(1)(23)\pi$ or $b=(123)\pi$, and $2a\neq 2$, then $\M(a^2b)=\{1,2\}$, $\M(a^2ba)=\{2a,3\}$, and $\M(a^2ba^2)=\{1,3,2a^2\}$, and the word $a^2ba^2$ $3$-compresses $\A$;
        \item if $b=(132)\pi$ and $2a\neq 2$, then $\M(a^2b)=\{2,3\}$, $\M(a^2b^2)=\{1,2\}$, $\M(a^2b^2a)=\{2a,3\}$, and $\M(a^2b^2a^2)=\{1,3,2a^2\}$, and the word $a^2b^2a^2$ $3$-compresses $\A$.
    \end{enumerate}

Let now $Orb_b(1,3)\nsubseteq\{1,2,3\}$, we distinguish four subcases by considering the cardinality of $Orb_b(3)$.
    \begin{enumerate}
        \item Let $Orb_b(3)=\{3\}$, then there are two further subcases:
            \begin{enumerate}
                \item if $b=(14\ldots)(3)\pi$, then $\M(a^2b)=\{3,4\}$, $\M(a^2ba)=\{1,3,4a\}$, and the word $a^2ba$ $3$-compresses $\A$;
                \item if $b=(124\ldots)(3)\pi$, then $\M(a^2b)=\{2,3\}$, $\M(a^2b^2)=\{3,4\}$, $\M(a^2b^2a)=\{1,3,4a\}$, and the word $a^2b^2a$ $3$-compresses $\A$.
            \end{enumerate}
        \item Let $Orb_b(3)=\{2,3\}$, then there are two further subcases:
            \begin{enumerate}
                \item if $b=(14)(23)\pi$, and $4a\neq 2$, then $\M(a^2b)=\{2,4\}$, $\M(a^2ba)=\{3,4a\}$, $\M(a^2ba^2)=\{1,3,4a^2\}$, and the word $a^2ba^2$ $3$-compresses $\A$;
                \item if $b=(145\ldots)(23)\pi$, then $\M(a^2b)=\{2,4\}$, $\M(a^2b^2)=\{3,5\}$, $\M(a^2b^2a)=\{1,3,5a\}$, and the word $a^2b^2a$ $3$-compresses $\A$.
            \end{enumerate}
        \item Let $Orb_b(3)=\{3,4\}$, i.e., $b=(34)\pi$, then in Fig. \ref{fig:caso:4-p:r:1} we draw a P3MSA for this case, proving that either the word $a^2b^2a$ or $a^2ba$ or $aba^2$ or $ababa$ $3$-compresses $\A$. Observe that if $1b=1$, then $2b\neq 2$.
            \begin{figure}[ht]%
                \centering
                {\footnotesize
                \begin{tikzpicture}[shorten >=1pt,node distance=5em,auto]

                    \node[state] (q_0) {};
                    \node[state] (q_1) [right of= q_0] {$3$};
                    \node[state,draw=none] (q_2) [right of= q_1] {};
                    \node[state] (q_3) [right of= q_2] {$1,3$};
                    \node[state] (q_4) [right of= q_3] {$1b,4$};
                    \node[state,draw=none] (q_5) [right of= q_4] {};
                    \node[state] (q_55) [right of= q_5] {$2b,3$};
                    \node[state] (q_6) [below of= q_0] {$4$};
                    \node[state] (q_7) [right of= q_6] {$3,4a$};
                    \node[state] (q_8) [right of= q_7] {$4$};
                    \node[state] (q_9) [right of= q_8] {$2,3$};
                    \node[state] (q_10) [right of= q_9] {$2b,4$};
                    \node[state,accepting] (q_11) [right of= q_10] {};

                    \path[->] (q_0) edge node {$a$} (q_1)
                              (q_1) edge node {$a|4a=2,1b\neq 1$} (q_3)
                              (q_3) edge node {$b$} (q_4)
                              (q_4) edge node {$b|1b=2$} (q_55)
                              (q_4) edge node {$a|1b\neq 2$} (q_11)
                              (q_1) edge node [swap] {$b|4a\neq 2$} (q_6)
                              (q_6) edge node {$a$} (q_7)
                              (q_7) edge [bend right=20] node [swap] {$a$} (q_11)
                              (q_1) edge node {$b|4a=2,1b=1$} (q_8)
                              (q_8) edge node {$a$} (q_9)
                              (q_9) edge node {$b$} (q_10)
                              (q_10) edge node {$a$} (q_11)
                              (q_55) edge node {$a$} (q_11);

                    \draw [->] (-.5,.5) -- (-.27,.27);

                \end{tikzpicture}}\\
                \caption[]{P3MSA for the case $b=(34)\pi$.}%
                \label{fig:caso:4-p:r:1}%
            \end{figure}
        \item Let $|Orb_b(3)|\geq 3$, then there are three main subcases:
            \begin{enumerate}
                \item $3b=1$.
                    \begin{enumerate}
                      \item If $b=(3124\ldots)\pi$, then $\M(a^2b)=\{1,2\}$, $\M(a^2b^2)=\{2,4\}$, $\M(a^2b^3)=\{4,4b\}$, $\M(a^2b^3a)=\{3,4a,4ba\}$ and the word $a^2b^3a$ $3$-compresses $\A$.
                      \item If $b=(314\ldots)\pi$, then the P3MSA in Fig. \ref{fig:caso:4-p:r:2} proves that either the word $a^2baba^2$ or $a^2b^2a$ or $ab^2a^2$ $3$-compresses $\A$.
                          Observe that if $4a=2$ and $4b=2$, then $b$ is of the form $(31425\ldots)\pi$, otherwise condition $4.$ it is not satisfied.
        \begin{figure}[ht]%
            \centering
            {\footnotesize
            \begin{tikzpicture}[shorten >=1pt,node distance=5em,auto]

                \node[state] (q_0) {};
                \node[state] (q_1) [right of= q_0] {$3$};
                \node[state,draw=none] (q_2) [right of= q_1] {};
                \node[state] (q_3) [right of= q_2] {$1,3$};
                \node[state] (q_4) [right of= q_3] {$1,4$};
                \node[state,draw=none] (q_5) [right of= q_4] {};
                \node[state] (q_6) [right of= q_5] {$4,4b$};
                \node[state] (q_7) [below of= q_0] {$1$};
                \node[state] (q_8) [right of= q_7] {$4$};
                \node[state] (q_9) [right of= q_8] {$3,4a$};
                \node[state] (q_10) [right of= q_9] {$2,3$};
                \node[state] (q_11) [right of= q_10] {$1,5$};
                \node[state] (q_12) [right of= q_11] {$3,5a$};
                \node[state,accepting] (q_13) [right of= q_12] {};

                \path[->] (q_0) edge node {$a$} (q_1)
                          (q_1) edge node {$a|4a=2$} (q_3)
                          (q_1) edge node [swap] {$b|4a\neq 2$} (q_7)
                          (q_3) edge node {$b$} (q_4)
                          (q_4) edge node {$b|4b\neq2$} (q_6)
                          (q_4) edge node [swap] {$a|4b=2$} (q_10)
                          (q_6) edge node {$a$} (q_13)
                          (q_7) edge node {$b$} (q_8)
                          (q_8) edge node {$a$} (q_9)
                          (q_9) edge [bend right=20] node [swap] {$a$} (q_13)
                          (q_10) edge node {$b$} (q_11)
                          (q_11) edge node {$a$} (q_12)
                          (q_12) edge node {$a$} (q_13);

                \draw [->] (-.5,.5) -- (-.27,.27);

            \end{tikzpicture}}\\
            \caption[]{P3MSA for the case $b=(314\ldots)\pi$.}%
            \label{fig:caso:4-p:r:2}%
        \end{figure}
                    \end{enumerate}
                \item $3b=2$.
                    \begin{enumerate}
                      \item If $b=(3214\ldots)\pi$, then the P3MSA in Fig. \ref{fig:caso:4-p:r:3} proves that either the word $a^2ba^2$ or $ab^3ab^3a$ $3$-compresses $\A$.
        \begin{figure}[ht]%
            \centering
            {\footnotesize
            \begin{tikzpicture}[shorten >=1pt,node distance=5em,auto]

                \node[state] (q_0) {};
                \node[state] (q_1) [right of= q_0] {$3$};
                \node[state,draw=none] (q_2) [right of= q_1] {};
                \node[state] (q_3) [right of= q_2] {$1,3$};
                \node[state] (q_4) [right of= q_3] {$2,4$};
                \node[state] (q_5) [right of= q_4] {$3,4a$};
                \node[state,accepting] (q_6) [right of= q_5] {};
                \node[state] (q_7) [below of= q_0] {$2$};
                \node[state] (q_8) [right of= q_7] {$1$};
                \node[state] (q_9) [right of= q_8] {$4$};
                \node[state] (q_10) [right of= q_9] {$2,3$};
                \node[state] (q_11) [right of= q_10] {$1,2$};
                \node[state] (q_12) [right of= q_11] {$1,4$};
                \node[state] (q_13) [right of= q_12] {$4,4b$};

                \path[->] (q_0) edge node {$a$} (q_1)
                          (q_1) edge node {$a|4a\neq 2$} (q_3)
                          (q_1) edge node [swap] {$b|4a=2$} (q_7)
                          (q_3) edge node {$b$} (q_4)
                          (q_4) edge node {$a$} (q_5)
                          (q_5) edge node {$a$} (q_6)
                          (q_7) edge node {$b$} (q_8)
                          (q_8) edge node {$b$} (q_9)
                          (q_9) edge node {$a$} (q_10)
                          (q_10) edge node {$b$} (q_11)
                          (q_11) edge node {$b$} (q_12)
                          (q_12) edge node {$b$} (q_13)
                          (q_13) edge node {$a$} (q_6);

                \draw [->] (-.5,.5) -- (-.27,.27);

            \end{tikzpicture}}\\
            \caption[]{P3MSA for the case $b=(3214\ldots)\pi$.}%
            \label{fig:caso:4-p:r:3}%
        \end{figure}
                      \item If $b=(324\ldots)\pi$, then in Fig. \ref{fig:caso:4-p:r:4} we draw a P3MSA for this case, proving that either the word $ab^2a^2$ or $a^2b^3a$ or $a^2ba^2$ $3$-compresses $\A$.
                          Observe that $1b\neq 3$ implies $1ba\neq 1$.
                          Moreover if $4a=2$, then $1ba\neq 2$, otherwise the contradiction $1b=4$ arises.
                          So when $4a=2$ and $1b\neq 3$ it is $1ba\not\in\{1,2\}$.
        \begin{figure}[ht]%
            \centering
            {\footnotesize
            \begin{tikzpicture}[shorten >=1pt,node distance=5em,auto]

                \node[state] (q_0) {};
                \node[state] (q_1) [right of= q_0] {$3$};
                \node[state,draw=none] (q_2) [right of= q_1] {};
                \node[state] (q_3) [right of= q_2] {$2$};
                \node[state] (q_4) [right of= q_3] {$4$};
                \node[state] (q_5) [right of= q_4] {$3,4a$};
                \node[state,accepting] (q_6) [right of= q_5] {};
                \node[state] (q_7) [below of= q_0] {$1,3$};
                \node[state,draw=none] (q_8) [right of= q_7] {};
                \node[state] (q_9) [right of= q_8] {$2,3$};
                \node[state] (q_10) [right of= q_9] {$2,4$};
                \node[state] (q_11) [right of= q_10] {$4,4b$};
                \node[state] (q_12) [right of= q_11] {$1b,2$};
                \node[state] (q_13) [right of= q_12] {$1ba,3$};

                \path[->] (q_0) edge node {$a$} (q_1)
                          (q_1) edge node {$b|4a\neq 2$} (q_3)
                          (q_1) edge node [swap] {$a|4a=2$} (q_7)
                          (q_3) edge node {$b$} (q_4)
                          (q_4) edge node {$a$} (q_5)
                          (q_5) edge node {$a$} (q_6)
                          (q_7) edge node {$b|1b=3$} (q_9)
                          (q_7) edge [bend right=20] node [swap] {$b|1b\neq 3$} (q_12)
                          (q_9) edge node {$b$} (q_10)
                          (q_10) edge node {$b$} (q_11)
                          (q_12) edge node {$a$} (q_13)
                          (q_13) edge node {$a$} (q_6)
                          (q_11) edge node {$a$} (q_6);

                \draw [->] (-.5,.5) -- (-.27,.27);

            \end{tikzpicture}}\\
            \caption[]{P3MSA for the case $b=(324\ldots)\pi$.}%
            \label{fig:caso:4-p:r:4}%
        \end{figure}
                    \end{enumerate}
                \item $3b=4$.
                    \begin{enumerate}
                      \item If $1b\not\in\{1,2\}$, then $\M(a^2b)=\{1b,4\}$, $\M(a^2ba)=\{1ba,3,4a\}$, and the word $a^2ba$ $3$-compresses $\A$.
                      \item If $1b\in\{1,2\}$ and $4a\neq 2$, then $\M(aba)=\{4a,3\}$ and $\M(aba^2)=\{4a^2,1,3\}$, and the word $aba^2$ $3$-compresses $\A$.
                      \item If $1b\in\{1,2\}$, $4a\neq 2$ and $2b\in\{1,2\}$, then $4b\not\in\{1,2,3\}$, so $\M(aba)=\{4ba,3\}$ (with $4ba\neq 2$) and $\M(aba^2)=\{4ba^2,1,3\}$, and the word $aba^2$ $3$-compresses $\A$.
                      \item If $1b\in\{1,2\}$, $4a\neq 2$ and $2b\not\in\{1,2\}$, then $\M(aba)=\{2,3\}$, $\M(abab)=\{2b,4\}$ and $\M(ababa)=\{2ba,2,3\}$, and the word $ababa$ $3$-compresses $\A$.
                    \end{enumerate}
            \end{enumerate}
\end{enumerate}
\end{proof}

\FloatBarrier 
\section{\texorpdfstring{$3$-compressible automata without permutations}{3-compressible automata without permutations}}\label{nopermutation}

In this section we characterize proper $3$-compressible automata on $2$-letter alphabet where no letter acts as permutation.


\begin{proposition}\label{proposition:1-2-4}
Let $\A$ a $(\bf{i},\bf{j})$-automaton with $i\in\{1,2\}$ and $j\in\{1,2,4\}$, then $\A$ is either not $3$-compressible or not proper.
\end{proposition}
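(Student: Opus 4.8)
The plan is to argue by contradiction: suppose $\A$ is both $3$-compressible \emph{and} proper, and deduce that $\A$ cannot in fact be $3$-compressible. Since deficiency is monotone under taking factors (as $|\M(w)|\geq|\M(w_1)|$ when $w_1$ is a factor of $w$), properness forces $\mathrm{df}_{\A}(w)\leq 2$ for \emph{every} word $w$ of length $\leq 3$; in particular $\mathrm{df}_{\A}(ab)=\mathrm{df}_{\A}(ba)=2$, these being $\geq 2$ because $a$ is a factor and $\mathrm{df}_{\A}(a)=2$. I would translate such short-word bounds into injectivity statements through the additivity identity $\mathrm{df}_{\A}(uv)=\mathrm{df}_{\A}(u)+\bigl(|Qu|-|Quv|\bigr)$, whose second summand counts the coincidences that $v$ creates on the set $Qu$. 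Observe first that for $i\in\{1,2\}$ the missing states of $a$ lie \emph{inside} its kernel classes, so $a$ restricted to $Q\setminus\M(a)$ is a bijection, i.e.\ $\mathrm{df}_{\A}(a^2)=\mathrm{df}_{\A}(a)=2$; the same holds for $b$ whenever $j\in\{1,2\}$.

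For $j\in\{1,2\}$ both letters permute their images and have deficiency exactly $2$, and I claim the reachable missing sets are precisely $\{\M(a),\M(b)\}$. Indeed $\mathrm{df}_{\A}(ab)=2$ says $b$ is injective on $Q\setminus\M(a)$, and $\mathrm{df}_{\A}(ba)=2$ says $a$ is injective on $Q\setminus\M(b)$; together with $\mathrm{df}_{\A}(a^2)=\mathrm{df}_{\A}(b^2)=2$ this yields that $a$ is injective on each of $Q\setminus\M(a)$ and $Q\setminus\M(b)$, and likewise for $b$. Whenever a letter $\ell$ acts injectively on a set of size $n-2$, its image has size $n-2$ and sits inside $Q\ell=Q\setminus\M(\ell)$ (also of size $n-2$), hence equals it; so applying $a$ to either $Q\setminus\M(a)$ or $Q\setminus\M(b)$ returns $Q\setminus\M(a)$, and applying $b$ returns $Q\setminus\M(b)$. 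As the first letter of any word already lands in $\{\,Q\setminus\M(a),\,Q\setminus\M(b)\,\}$, induction on the word length shows every image has size $n-2$. Thus $\mathrm{df}_{\A}(w)=2$ for all $w$, so $\A$ is not $3$-compressible, the desired contradiction.

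The case $j=4$ is more delicate, because the missing state $r$ of $b$ lies \emph{outside} its kernel pair (with $rb$ inside it), so $b$ need not permute its image and the reachable missing sets can genuinely grow. Here I would first use $\mathrm{df}_{\A}(ab)=2$ to force the transposed pair of $b$ to meet $\M(a)$, and $\mathrm{df}_{\A}(ba)=2$ to force $r$ into a kernel class of $a$: if $r$ avoided every kernel class of $a$, then $a$ would create two coincidences on $Q\setminus\{r\}$ and $\mathrm{df}_{\A}(ba)$ would already equal $3$. With $r$ so confined I would track the missing sets of all words directly, showing each has size at most two; the remaining properness bounds on $aba,\,bab,\,aab,\,abb,\,bba,\,bbb$ eliminate exactly the local configurations in which a third state could be dropped. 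A short propagation argument then exhibits a small family of reachable two-element missing sets that is closed under both letters, so $\mathrm{df}_{\A}(w)\leq 2$ for all $w$ and $\A$ is again not $3$-compressible.

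The main obstacle is this final step in the $j=4$ cases: verifying that the family of reachable two-element missing sets is genuinely invariant, and that every configuration escaping this bound is already detected by a word of length $\leq 3$ (so that $\A$ falls on the \emph{not proper} side). This is a finite but fiddly enumeration over the positions of $r$, of the transposed pair $\{p,q\}$, and of their images relative to the kernel classes of $a$, and it is most transparently organized with the (partial) $3$MSA diagrams used in Propositions~\ref{proposition:3-p} and \ref{proposition:4-p}: each branch corresponds to one configuration and ends either at a state of size $3$ (a compressing word of length $\leq 3$, witnessing non-properness) or in a cycle through states of size $\leq 2$ (witnessing non-$3$-compressibility).
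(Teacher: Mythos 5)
Your argument for the sub-cases $j\in\{1,2\}$ is correct and takes a genuinely different, more uniform route than the paper. The paper handles $(\bf{1},\bf{1})$, $(\bf{1},\bf{2})$ and $(\bf{2},\bf{2})$ by enumerating explicit containment conditions (e.g.\ $\{1,2\}\subseteq\{x,y,z\}$ and $\{x,y\}\subseteq\{1,2,3\}$) under which the missing set alternates between $\M(a)$ and $\M(b)$, and by checking that each failure of such a condition is already witnessed by a word of length $2$. You instead read properness directly as $\mathrm{df}_{\A}(ab)=\mathrm{df}_{\A}(ba)=2$, translate these into injectivity of each letter on the complement of the other's missing set, and conclude that $\{Q\setminus\M(a),Q\setminus\M(b)\}$ is invariant under both letters; the paper's containment conditions are exactly the combinatorial shadow of these two equalities, so nothing is lost and the enumeration disappears. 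One small point to spell out: for types $\bf{1}$ and $\bf{2}$ what matters is not merely that the missing states lie in kernel classes, but that deleting them leaves every nontrivial kernel class a singleton --- that is what makes $a$ restricted to $Q\setminus\M(a)$ injective, hence a bijection of $Q\setminus\M(a)$ onto itself.

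For $j=4$, however, what you offer is a programme rather than a proof, and the unexecuted step is where essentially all of the paper's work in the $(\bf{1},\bf{4})$ and $(\bf{2},\bf{4})$ cases lies. The conditions you extract from $\mathrm{df}_{\A}(ab)=2$ and $\mathrm{df}_{\A}(ba)=2$ (the kernel pair of $b$ meets $\M(a)$; the missing state of $b$ lies in a nontrivial kernel class of $a$) are necessary but far from sufficient: in the paper's subcases (d)--(f) for $(\bf{2},\bf{4})$-automata those conditions hold and the outcome still splits several ways depending on $y$, $qb$ and $3b$, with some branches compressed only by length-$3$ words such as $aba$ or $ab^2$ and others genuinely incompressible via a closed family of reachable missing sets that has four members, not two (Fig.~\ref{fig:caso:2-4:2-4:1}). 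Your proposed resolution --- that the properness bounds on all words of length $3$ carve out exactly the incompressible configurations, for which the reachable missing sets then close up --- is true, but it is precisely the content to be proved; you have not exhibited the invariant family nor verified closure in a single configuration, and you flag this yourself as the main obstacle. Until that finite verification is carried out (the paper needs roughly a page of subcases and two figures), the $j=4$ half of the proposition is not established.
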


\begin{proof}
We have to consider five different cases.
\begin{enumerate}
  \item Let $\A$ be a $(\bf{1},\bf{1})$-automaton with $a=[1,2,3]\backslash 1,2$ and $b=[x,y,z]\backslash x,y$:
      \begin{enumerate}
        \item if $\{1,2\}\subseteq\{x,y,z\}$ and $\{x,y\}\subseteq\{1,2,3\}$, then for any $w\in\{a,b\}^*$, we have $\M(wa)=\{1,2\}$ and $\M(wb)=\{x,y\}$, so $\A$ is not 3-compressible;
        \item if $\{1,2\}\nsubseteq\{x,y,z\}$, then $|\M(ab)|\geq 3$, so $\A$ is not proper; similarly if $\{x,y\}\nsubseteq\{1,2,3\}$, then $|\M(ba)|\geq 3$, and again $\A$ is not proper.
      \end{enumerate}
  \item Let $\A$ be a $(\bf{1},\bf{2})$-automaton with $a=[1,2,3]\backslash 1,2$ and $b=[x,y][z,v]\backslash x,z$:
      \begin{enumerate}
        \item if $\{1,2\}\in\{\{x,z\},\{x,v\},\{y,z\},\{y,v\}\}$ and $\{x,z\}\subseteq\{1,2,3\}$, then for any $w\in\{a,b\}^*$, we have $\M(wa)=\{1,2\}$ and $\M(wb)=\{x,z\}$, so $\A$ is not $3$-compressible;
        \item if $\{1,2\}\not\in\{\{x,z\},\{x,v\},\{y,z\},\{y,v\}\}$, then $|\M(ab)|\geq 3$, so $\A$ is not proper; similarly if $\{x,z\}\nsubseteq\{1,2,3\}$, then $|\M(ba)|\geq 3$, and again $\A$ is not proper.
      \end{enumerate}
  \item Let $\A$ be a $(\bf{2},\bf{2})$-automaton with $a=[1,2][3,4]\backslash 1,3$ and $b=[x,y][z,v]\backslash x,z$:
      \begin{enumerate}
        \item if $\{1,3\}\in\{\{x,z\},\{x,v\},\{y,z\},\{y,v\}\}$ and $\{x,z\}\in\{\{1,3\},\{1,4\},\{2,3\},\{2,4\}\}$, then for all $w\in\{a,b\}^*$, we have $\M(wa)=\{1,3\}$ and $\M(wb)=\{x,z\}$, so $\A$ is not $3$-compressible;
        \item if $\{1,3\}\not\in\{\{x,z\},\{x,v\},\{y,z\},\{y,v\}\}$, then $|\M(ab)|\geq 3$, so $\A$ is not proper; similarly if $\{x,z\}\not\in\{\{1,3\},\{1,4\},\{2,3\},\{2,4\}\}$, then $|\M(ba)|\geq 3$, and again $\A$ is not proper.
      \end{enumerate}
  \item Let $\A$ be a $(\bf{1},\bf{4})$-automaton with $a=[1,2,3]\backslash 1,2$ and $b=[x,y]\backslash z$, $zb=x$:
      \begin{enumerate}
        \item if $\{x,z\}\nsubseteq\{1,2,3\}$, then $\M(b^2)=\{x,z\}$ and $\M(b^2a)=\{1,2,xa,za\}$, so $\A$ is not proper;
        \item if $\{x,z\}=\{1,2\}$, then for all $w\in\{a,b\}^+\setminus\{b\}$, we have $\M(w)=\{1,2\}$, so $\A$ is not $3$-compressible;
        \item if $\{x,z\}\in\{\{1,3\},\{2,3\}\}$, let $q\in\{1,2,3\}\setminus\{x,z\}$. Then $\M(ab)=\{z,qb\}$ and $\M(ab^2)=\{x,z,qb^2\}$, so $\A$ is not proper.
      \end{enumerate}
  \item Let $\A$ be a $(\bf{2},\bf{4})$-automaton with $a=[1,2][3,4]\backslash 1,3$ and $b=[x,y]\backslash z$, $zb=x$:
      \begin{enumerate}
        \item if $\{x,z\}\not\in\{\{1,3\},\{1,4\},\{2,3\},\{2,4\}\}$, then $\M(b^2)=\{x,z\}$ and $|\M(b^2a)|\geq 3$, so $\A$ is not proper;
        \item if $\{x,y\}\cap\{1,3\}=\emptyset$, then $\M(ab)=\{1b,3b,z\}$, so $\A$ is not proper;
        \item if $\{x,z\}=\{1,3\}$, then for all $w\in\{a,b\}^+\setminus\{b\}$, we have $\M(w)=\{1,3\}$, so $\A$ is not $3$-compressible;
        \item if $\{x,z\}=\{1,4\}$ and $\{x,y\}\cap\{1,3\}\neq\emptyset$, we consider two subcases:
          \begin{enumerate}
            \item if $x=4$ and $z=1$, then $y$ must be equal to 3. So $b=[4,3]\backslash 1$, then for all $w\in\{a,b\}^+$, we have $\M(wa)=\{1,3\}$ and $\M(wb)=\{1,4\}$, so $\A$ is not $3$-compressible;
            \item if $x=1$ and $z=4$, then if $y\neq 2$ the P3MSA in Fig. \ref{fig:caso:2-4:1-4:2} proves that $\A$ is not proper. Else, if $y=2$ and $3b\neq 2$, then the word $aba$ $3$-compresses $\A$ which is not proper. Otherwise, if $y=2$ and $3b=2$, then the 3MSA in Fig. \ref{fig:caso:2-4:1-4} proves that $\A$ is not $3$-compressible;
          \end{enumerate}
        \item if $\{x,z\}=\{2,3\}$, this case reduces to the previous exchanging the state $1$ with $3$ and $2$ with $4$;
        \item if $\{x,z\}=\{2,4\}$ and $\{x,y\}\cap\{1,3\}\neq\emptyset$ then $y\in\{1,3\}$. Let $q\in\{1,3\}\setminus\{y\}$.
            \begin{enumerate}
              \item If $b=[2,1]\backslash 4$ and $q=3$, or $b=[4,3]\backslash 2$ and $q=1$ then, if $qb\neq y$ we have $\M(ab)=\{z,qb\}$ and $\M(aba)=\{1,3,qba\}$, so the automaton is not proper, else if $qb=y$ $\A$ is not $3$-compressible, as shown in Fig. \ref{fig:caso:2-4:2-4:1}.
              \item Let $b=[2,3]\backslash 4$ and $q=1$, or $b=[4,1]\backslash 2$ and $q=3$ then either the word $ab^2$ or $aba$ $3$-compresses $\A$, so it is not proper.
                  \begin{figure}[ht]%
                    \centering
                    \subfigure[][P3MSA for the case $b={[1,y]}\backslash 4$, $4b=1$, $y\neq 2$.]{%
                    \label{fig:caso:2-4:1-4:2}%
                    {\footnotesize
                    \begin{tikzpicture}[shorten >=1pt,node distance=5em,auto]

                        \node[state] (q_0) {};
                        \node[state] (q_1) [right of= q_0] {$1,3$};
                        \node[state] (q_3) [right=4.5em of q_1] {$4,3b$};
                        \node[state] (q_4) [below of= q_1] {$2,4$};
                        \node[state,accepting] (q_5) [below of= q_3] {};

                        \path[->] (q_0) edge node {$a$} (q_1)
                                  (q_1) edge node {$b|3b\neq 2$} (q_3)
                                  (q_1) edge node {$b|3b=2$} (q_4)
                                  (q_3) edge node {$a$} (q_5)
                                  (q_4) edge node {$b$} (q_5);

                        \draw [->] (-.5,.5) -- (-.27,.27);

                    \end{tikzpicture}}}%
                    \hspace{30pt}%
                    \subfigure[][3MSA for the case $b={[1,2]}\backslash 4$, $4b=1$, $3b=2$, or $b={[2,1]}\backslash 4$, $4b=1$ and $3b= 1$, or $b={[4,3]}\backslash 2$, $2b=4$ and $1b= 3$.]{%
                    \label{fig:caso:2-4:2-4:1}%
                    {\footnotesize
                    \begin{tikzpicture}[shorten >=1pt,node distance=5em,auto]

                        \node[state] (q_0) {};
                        \node[state] (q_1) [right of= q_0] {$1,3$};
                        \node[state] (q_2) [right of= q_1] {$y,z$};
                        \node[state] (qq) [draw=none,right=8.1em of q_1] {};
                        \node[state] (q_3) [below of= q_0] {$z$};
                        \node[state] (q_4) [below of= q_1] {$x,z$};
                        \node[state,draw=none] (q) [right of= q_3] {};

                        \path[->] (q_0) edge node {$a$} (q_1)
                                  (q_0) edge node {$b$} (q_3)
                                  (q_1) edge [loop above] node {$a$} (q_1)
                                  (q_1) edge [bend left=10] node {$b$} (q_2)
                                  (q_2) edge node {$b$} (q_4)
                                  (q_2) edge [bend left=10] node {$a$} (q_1)
                                  (q_3) edge node {$b$} (q_4)
                                  (q_3) edge node {$a$} (q_1)
                                  (q_4) edge [loop right] node {$b$} (q_4)
                                  (q_4) edge node {$a$} (q_1);

                        \draw [->] (-.5,.5) -- (-.27,.27);

                    \end{tikzpicture}}}%
                    \caption[]{3MSA for a $(\bf{2},\bf{4})$-automaton with $a=[1,2][3,4]\backslash 1,3$ and $b$ satisfying either condition (d).ii or (f).i.}%
                    \label{fig:caso:2-4:1-4}%
                \end{figure}
            \end{enumerate}
      \end{enumerate}
\end{enumerate}
\end{proof}

\FloatBarrier


\begin{proposition}\label{proposition:1-3}
Let $\A$ be a $(\bf{1},\bf{3})$-automaton with $a=[1,2,3]\backslash 1,2$ and $b=[x,y]\backslash x$.
Then $\A$ is $3$-compressible and proper if, and only if, the following conditions hold:
\begin{enumerate}
  \item $x\in\{1,2,3\}$, and
  \item $\{1,2\}\cap\{x,y\}\neq\emptyset$, and
  \item for all $q\in\{1,2\}\setminus\{x\}$, $qb\in\{1,2,3\}$, and $Orb_b(q)\not\subseteq\{1,2,3\}$.
\end{enumerate}
Moreover, if $\mathcal{A}$ is proper and $3$-compressible, then the word $ab^2a$ $3$-compresses $\A$.
\end{proposition}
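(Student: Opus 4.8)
The plan is to exploit two structural facts that follow directly from the definitions. First, since $a=[1,2,3]\backslash 1,2$, one has $\M(a)=\{1,2\}$, and more generally $\{1,2\}\subseteq\M(Q_1,a)$ for every $Q_1$, where the extra element $qa$ ($q\in Q_1$) is genuinely missed precisely when $q\notin\{1,2,3\}$, or when $\{1,2,3\}\subseteq Q_1$; in particular $|\M(Q_1,a)|\geq 3$ as soon as $Q_1$ meets $Q\setminus\{1,2,3\}$. Second, a letter $b=[x,y]\backslash x$ of type $\mathbf 3$ restricts to a permutation of $Q\setminus\{x\}$ (its only non-injectivity is $x,y\mapsto yb$, its only omitted value is $x$), so for every $q\neq x$ the orbit $Orb_b(q)$ is a $b$-cycle contained in $Q\setminus\{x\}$. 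Throughout I use the identity $\M(Q_1,w)=\M(w)\cup\{qw\mid q\in Q_1,\ qw\notin(Q\setminus Q_1)w\}$ from the definition of the missing set, together with $\M(ab^2a)=\M(\M(\M(\M(a),b),b),a)$.

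For necessity I argue by contrapositive, treating the failure of each condition. If $x\notin\{1,2,3\}$, then $xa$ is reached injectively, so $\M(ba)=\{1,2,xa\}$ has size $3$ and the length-$2$ word $ba$ already $3$-compresses $\A$, which is therefore not proper. If condition $1$ holds but condition $2$ fails, then necessarily $x=3$ and $y\notin\{1,2\}$, whence $1b,2b\neq 3=\M(b)$ are distinct and both missed, giving $\M(ab)=\{3,1b,2b\}$ of size $3$ and again a non-proper automaton. If conditions $1,2$ hold but $3(a)$ fails, some $q\in\{1,2\}\setminus\{x\}$ has $qb\notin\{1,2,3\}$; then $qb\in\M(ab)$, so the length-$3$ word $aba$ satisfies $\M(aba)\supseteq\{1,2,(qb)a\}$ by the first structural fact, hence $\A$ is not proper. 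Finally, if $3(b)$ fails, some $q\in\{1,2\}\setminus\{x\}$ has $Orb_b(q)\subseteq\{1,2,3\}$; since this orbit is a $b$-cycle avoiding $x$ and trapped in $\{1,2,3\}$, the missing set can never leave $\{1,2,3\}$ and, by the first structural fact, never reaches size $3$, so $\A$ is not $3$-compressible. This last case is the main obstacle, and it is exactly where a (conditional) $3$MSA is the cleanest way to certify that no word attains deficiency $3$.

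For sufficiency, assume conditions $1$--$3$. I first observe that $x\neq 3$: otherwise condition $2$ forces $y\in\{1,2\}$, and then $3(a)$ gives $1b,2b\in\{1,2,3\}\setminus\{3\}=\{1,2\}$ (recall $3\notin\Imm(b)$), so $\{1,2\}$ is $b$-invariant and $Orb_b(1),Orb_b(2)\subseteq\{1,2,3\}$, contradicting $3(b)$. Hence $x\in\{1,2\}$, and by the $1\leftrightarrow 2$ symmetry of $a$ I may assume $x=1$. Condition $3$ applied to $q=2$ then forces $2b=3$ (since $2b\in\{1,2,3\}$, $2b\neq 1$ as $1\notin\Imm(b)$, and $2b=2$ would trap $Orb_b(2)$), and the same trapping argument applied to the cycle $2\to 3\to 3b\to\cdots$ forces $3b\notin\{1,2,3\}$.

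With these two equalities the computation of $ab^2a$ is short: $\M(a)=\{1,2\}$, then $\M(ab)=\{1,3\}$ and $\M(ab^2)=\{1,t\}$ with $t=3b\notin\{1,2,3\}$, both of size $2$; applying the final $a$ and the first structural fact gives $\M(ab^2a)\supseteq\{1,2,ta\}$ of size $3$, so $ab^2a$ $3$-compresses $\A$. It remains to check properness, i.e.\ that none of the eight words of length $3$ attains deficiency $3$: using $\M(a^k)=\{1,2\}$, $\M(b^k)=\{1\}$, $\M(aba)=\{1,2\}$, and the values of $\M(ab)$ and $\M(ba)=\{1,2\}$ already computed, every length-$3$ word is seen to have deficiency $\leq 2$. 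This completes the plan.
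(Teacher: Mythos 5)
Your overall strategy is the same as the paper's: for necessity you exhibit the short witnesses $ba$, $ab$, $aba$ when conditions 1, 2, 3(a) fail, and for sufficiency you compute $\M(ab^2a)$ explicitly. Your sufficiency half is in fact tighter than the paper's: the observation that conditions 1--3 (with the universal quantifier as written) exclude $x=3$, the derivation $2b=3$ and $3b\notin\{1,2,3\}$ for $x=1$, and the explicit check that no word of length $3$ reaches deficiency $3$ (a check the paper omits) are all correct. The genuine gaps are in the necessity direction, and both sit in the case $x=3$, $y\in\{1,2\}$.

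First, in your treatment of ``3(a) fails'' the step ``$qb\in\M(ab)$'' is false when $x=3$ and the failing $q$ equals $y$: then $qb=yb=xb$ is still reached from the state $3\notin\{1,2\}$, so it is not missed. The paper sidesteps this by choosing $q\in\{1,2\}\setminus\{y\}$, and the sub-case is not vacuous: take $Q=\{1,\dots,5\}$ with $a\colon 1,2,3\mapsto 4$, $4\mapsto 3$, $5\mapsto 5$ and $b\colon 1\mapsto 2$, $2,3\mapsto 4$, $4\mapsto 5$, $5\mapsto 1$ (so $x=3$, $y=2$, $2b=4\notin\{1,2,3\}$). Every word of length $3$ has deficiency at most $2$ while $ab^2a$ has deficiency $3$, so this automaton is proper and $3$-compressible even though condition 3, as literally quantified, fails for $q=2$; your argument cannot be completed here without revisiting the quantifier (the paper's own necessity argument only establishes the existential version of condition 3, which is what this example satisfies via $q=1$). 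Second, for ``3(b) fails'' you assert that ``the missing set can never leave $\{1,2,3\}$'' and conclude non-compressibility, but that assertion is precisely what has to be proved: the trapped orbit controls only the iterates of the single state $q$, while the other state of $\{1,2\}$ may be sent outside $\{1,2,3\}$ by $b$ (again when $x=3$), after which one application of $a$ already gives deficiency $3$ and the correct conclusion is ``not proper'', not ``not $3$-compressible''. The paper discharges this branch by first pinning down $qb\in\{q,3\}$ and $3b=q$ (for $x\in\{1,2\}$), resp.\ $\{1b,2b\}=\{1,2\}$ (for $x=3$), and then exhibiting the two conditional $3$MSAs of Figures \ref{fig:prop:1-3:1} and \ref{fig:prop:1-3:2}; deferring to ``a conditional $3$MSA'' without constructing it leaves this case unproved.
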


\begin{proof}
Let $\A$ be a $(\bf{1},\bf{3})$-automaton that does not satisfy one of the conditions $1.-3.$
\begin{enumerate}
  \item Let $x\not\in\{1,2,3\}$, then the word $ba$ $3$-compresses $\A$, and so it is not proper.
  \item Let $\{1,2\}\cap\{x,y\}=\emptyset$, then the word $ab$ $3$-compresses $\A$, and so it is not proper.
  \item Let for all $q\in\{1,2\}\setminus\{x\}$ be $qb\not\in\{1,2,3\}$ or $Orb_b(q)\subseteq\{1,2,3\}$, and $x\in\{1,2,3\}$, and $\{1,2\}\cap\{x,y\}\neq\emptyset$.
      \begin{enumerate}
        \item Let $qb\not\in\{1,2,3\}$, then if $x\in\{1,2\}$, we have $\M(ab)=\{qb,x\}$ and $\M(aba)=\{1,2,qba\}$, else if $x=3$, we choose $q\in\{1,2\}\setminus\{y\}$, and again $\M(ab)=\{qb,x\}$ and $\M(aba)=\{1,2,qba\}$.
            So if $qb\not\in\{1,2,3\}$ the word $aba$ $3$-compresses $\A$ which is not proper.
        \item So let $qb\in\{1,2,3\}$ and $Orb_b(q)\subseteq\{1,2,3\}$.
            \begin{enumerate}
                \item If $x\in\{1,2\}$, then $qb\in\{q,3\}$ and $3b=q$, so $\A$ is not $3$-compressible, as shown in Fig. \ref{fig:prop:1-3:1}.
                \item If $x=3$, then $y\in\{1,2\}$ and $\{1b,2b\}=\{1,2\}$, and $\A$ is not $3$-compressible, as shown in Fig. \ref{fig:prop:1-3:2}, where in this case $q\in\{1,2\}\setminus\{y\}$.
            \begin{figure}[ht]%
                \centering
                \subfigure[][3MSA for the case $x\in\{1,2\}$.]{%
                \label{fig:prop:1-3:1}%
                {\footnotesize
                \begin{tikzpicture}[shorten >=1pt,node distance=5em,auto]

                    \node[state] (q_0) {};
                    \node[state] (q_1) [right of= q_0] {$1,2$};
                    \node[state] (q_2) [right of= q_1,draw=none] {};
                    \node[state] (q_3) [right of= q_2] {$x,3$};
                    \node[state] (q_4) [below of= q_0] {$x$};
                    \node[state] (q_5) [right of= q_4] {$1,2$};
                    \node[state] (q_6) [right of= q_5] {$x,q$};
                    \node[state] (q_7) [right of= q_6] {$1,2$};

                    \path[->] (q_0) edge node {$a$} (q_1)
                              (q_0) edge node {$b$} (q_4)
                              (q_1) edge [loop above] node {$a$} (q_1)
                              (q_1) edge node {$b|qb=3$} (q_3)
                              (q_1) edge node {$b|qb=q$} (q_5)
                              (q_3) edge [bend left=10] node {$a$} (q_7)
                              (q_3) edge [bend left=10] node {$b$} (q_6)
                              (q_4) edge [loop below] node {$b$} (q_4)
                              (q_4) edge node {$a$} (q_1)
                              (q_5) edge [loop below] node {$a,b$} (q_5)
                              (q_6) edge node {$a$} (q_7)
                              (q_6) edge [bend left=10] node {$b$} (q_3)
                              (q_7) edge [bend left=10] node {$b$} (q_3)
                              (q_7) edge [loop below] node {$a$} (q_7);

                    \draw [->] (-.5,.5) -- (-.27,.27);

                \end{tikzpicture}}%
                }
                \hspace{20pt}%
                \subfigure[][3MSA for the case $x=3$.]{%
                \label{fig:prop:1-3:2}%
                {\footnotesize
                \begin{tikzpicture}[shorten >=1pt,node distance=5em,auto]

                    \node[state] (q_0) {};
                    \node[state] (q_1) [right of= q_0] {$1,2$};
                    \node[state] (q_2) [right of= q_1,draw=none] {};
                    \node[state] (q_3) [right of= q_2] {$3,q$};
                    \node[state] (q_4) [right of= q_3] {$1,2$};
                    \node[state] (q_5) [below of= q_0] {$1$};
                    \node[state] (q_6) [right of= q_5] {$3,q$};
                    \node[state] (q_7) [right of= q_6] {$3,y$};
                    \node[state] (q_8) [right of= q_7] {$1,2$};

                    \path[->] (q_0) edge node {$a$} (q_1)
                              (q_0) edge node {$b$} (q_5)
                              (q_1) edge [loop above] node {$a$} (q_1)
                              (q_1) edge node {$b|qb=q$} (q_3)
                              (q_1) edge node {$b|qb=y$} (q_7)
                              (q_3) edge [loop above] node {$b$} (q_3)
                              (q_3) edge [bend left=10] node {$a$} (q_4)
                              (q_4) edge [loop above] node {$a$} (q_4)
                              (q_4) edge [bend left=10] node {$b$} (q_3)
                              (q_5) edge [loop below] node {$b$} (q_5)
                              (q_5) edge node {$a$} (q_1)
                              (q_6) edge [bend right=35] node [swap] {$a$} (q_8)
                              (q_6) edge [bend left=10] node {$b$} (q_7)
                              (q_7) edge [bend left=10] node {$b$} (q_6)
                              (q_7) edge [bend left=10] node {$a$} (q_8)
                          (q_8) edge [loop below] node {$a$} (q_8)
                          (q_8) edge [bend left=10] node {$b$} (q_7)

                          (q_6) edge [loop below,draw=none] node {\phantom{$a,b$}} (q_6);

                \draw [->] (-.5,.5) -- (-.27,.27);

                \end{tikzpicture}}}%
                \caption[]{3MSA for automata that do not satisfy condition 3. of Proposition \ref{proposition:1-3} with $zb\in\{1,2,3\}$ and $Orb_b(z)\subseteq\{1,2,3\}$.}%
                \label{fig:prop:1-3}%
            \end{figure}
            \end{enumerate}
        \end{enumerate}
\end{enumerate}
Conversely, suppose $x\in\{1,2,3\}$, $\{1,2\}\cap\{x,y\}\neq\emptyset$, and for all $q\in\{1,2\}\setminus\{x\}$, $qb\in\{1,2,3\}$ and $Orb_b(q)\not\subseteq\{1,2,3\}$.
If $x\in\{1,2\}$, let $q\in\{1,2\}\setminus\{x\}$, else let $q\in\{1,2\}\setminus\{y\}$; then $\M(ab)=\{x,qb\}$, $\M(ab^2)=\{x,qb^2\}$, and, as $qb^2\not\in\{1,2,3\}$, $\M(ab^2a)=\{1,2,qb^2a\}$, and the word $ab^2a$ $3$-compresses $\A$.
\end{proof}

\FloatBarrier


\begin{proposition}\label{proposition:2-3}
Let $\A$ be a $(\bf{2},\bf{3})$-automaton with $a=[1,2][3,4]\backslash 1,3$ and $b=[x,y]\backslash x$. Then $\A$ is $3$-compressible and proper if, and only if, the following conditions hold:
\begin{enumerate}
  \item $x\in\{1,2,3,4\}$, and
  \item if $x=1$, then $3b=4$ and $Orb_b(3)\not\subseteq\{3,4\}$, and
  \item if $x=2$, then $y\in\{1,3\}$ and
  \begin{enumerate}
    \item if $y=1$, then $3b=4$ and $4b\neq 3$, and
    \item if $y=3$, then $1b\in\{3,4\}$, and
  \end{enumerate}
  \item if $x=3$, then $1b=2$ and $Orb_b(1)\not\subseteq\{1,2\}$, and
  \item if $x=4$, then $y\in\{1,3\}$ and
  \begin{enumerate}
    \item if $y=1$, then $3b\in\{1,2\}$, and
    \item if $y=3$, then $1b=2$ and $2b\neq 1$.
  \end{enumerate}
\end{enumerate}
Moreover, if $\mathcal{A}$ is proper and $3$-compressible, then either the word $ab^2a$ or $ab^3a$ $3$-compresses $\A$.
\end{proposition}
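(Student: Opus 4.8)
The plan is to reduce the whole statement to two elementary facts about how the two letters act on missing sets of size at most two, and then to run a short forward computation for sufficiency against a finite invariance (3MSA) argument for necessity.

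First I would record a \emph{compression criterion} for the trailing letter. Since $a=[1,2][3,4]\backslash 1,3$ has fibres $\{1,2\}\mapsto 2$, $\{3,4\}\mapsto 4$ and fixes every state $\geq 5$, for a two-element set $S=\{p,q\}$ one gets $\M(S,a)=\{1,3\}$ precisely when $S\in\{\{1,3\},\{1,4\},\{2,3\},\{2,4\}\}$, and $|\M(S,a)|=3$ otherwise (that is, when $S\not\subseteq\{1,2,3,4\}$ or $S\in\{\{1,2\},\{3,4\}\}$). I will call the latter sets \emph{good} and the four transversals \emph{bad}. Dually, as $b=[x,y]\backslash x$ is injective off the single fibre $\{x,y\}$ and misses only $x$, for $S\neq\{x,y\}$ one has $\M(S,b)=\{x\}\cup\{sb : s\in S\setminus\{x,y\}\}$, while $\M(\{x,y\},b)=\{x,xb\}$. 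Thus $b$ keeps the missing set of size $\leq 2$ and merely \emph{shifts} its non-fibre element one step along the $b$-orbit while reinstating $x$.

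For necessity I argue by contraposition, exactly as in the previous propositions. Condition $1$ is forced by properness: if $x\geq 5$ then $\M(\{x\},a)=\{1,3,x\}$, so $ba$ already $3$-compresses and $\A$ is not proper. Given $x\in\{1,2,3,4\}$, I compute $\M(a)=\{1,3\}$ and $\M(ab)=\{x\}\cup\{qb\}$ for the active state $q\in\{1,3\}\setminus\{x,y\}$; the length-$3$ word $aba$ is $3$-compressing iff this two-set is good, so \emph{properness} is exactly the demand that $\M(ab)$ be one of the bad transversals through $x$, which pins down the boundary values named in conditions $2$--$5$ (for instance $3b=4$ when $x=1$, $1b=2$ when $x=3$, and the prescribed $1b,3b$ in the $x\in\{2,4\}$, $y\in\{1,3\}$ cases). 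The remaining clauses --- the orbit conditions $Orb_b(3)\not\subseteq\{3,4\}$ and $Orb_b(1)\not\subseteq\{1,2\}$, together with $4b\neq 3$, $2b\neq 1$, and the like --- are precisely what forbids the shifted element from cycling forever inside $\{1,2,3,4\}$: when one of them fails, the $b$-orbit is trapped in a two-element cycle and the reachable missing sets oscillate among bad transversals, so no word reaches deficiency $3$. I would certify this trapped behaviour configuration by configuration through the finite $3$MSA displayed for each failing case, checking that every edge stays among the drawn size-$\leq 2$ states.

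For sufficiency I assume $1$--$5$ and exhibit the word by tracking $\M(ab^{k})$. Starting from $\M(a)=\{1,3\}$, the prescribed value of $b$ sends $\M(ab)$ to a bad transversal (so $aba$ fails, in agreement with properness); the orbit clause then guarantees that one further application of $b$ --- or two, in the $x\in\{2,4\}$ sub-cases where the first shift merely lands on another bad transversal --- moves the non-fixed element out of $\{3,4\}$ (resp. $\{1,2\}$), and injectivity of $b$ off its fibre forbids it from equalling the excluded values. Hence $\M(ab^{2})$, or $\M(ab^{3})$, is good and the criterion for $a$ yields the asserted deficiency $3$, giving exactly the pair $ab^2a$, $ab^3a$; properness follows for free, since the criterion shows every length-$\leq 3$ word other than the deliberately spoiled $aba$ keeps the deficiency below $3$. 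The main obstacle is the necessity side: establishing that in each trapped configuration \emph{no} word whatsoever attains deficiency $3$ is an invariance statement over the entire transition system rather than a single computation, and it is there that the conditional $3$MSA --- with its orbit-dependent branches such as $3b=4$ versus $3b\neq 4$, or $4b=3$ versus $4b\neq 3$ --- must be verified edge by edge, while managing the proliferation of sub-cases by $x$, then by $y\in\{1,3\}$, then by the first escape point of the $b$-orbit, consistently with the injectivity constraints on $b$.
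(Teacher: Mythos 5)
Your proposal is correct and follows essentially the same route as the paper: necessity by contraposition (properness forcing $\M(ab)$ into one of the four transversals of the kernel of $a$, plus finite 3MSA checks for the trapped, non-compressible configurations) and sufficiency by tracking $\M(ab^k)$ until injectivity of $b$ off its fibre pushes the shifted element out of the forbidden values, yielding exactly $ab^2a$ or $ab^3a$. Two small imprecisions that do not derail the argument: the type $[1,2][3,4]\backslash 1,3$ does not assert that $1a=2$, $3a=4$, or that $a$ fixes states $\geq 5$ (only the kernel classes and the missing states are prescribed), but your good/bad classification of two-element sets depends only on those data; and for $x=1$ properness alone only forces $3b\in\{3,4\}$ --- the exclusion of $3b=3$ is a non-compressibility (trapped-orbit) fact, which your 3MSA verification step is indeed set up to cover.
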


\begin{proof}
Let $\A$ be a $(\bf{2},\bf{3})$-automaton that does not satisfy one of the conditions $1.-5.$
\begin{enumerate}
  \item Let $x\not\in\{1,2,3,4\}$, then the word $ba$ $3$-compresses $\A$, and so it is not proper.
  \item Let $x=1$,
  \begin{enumerate}
    \item if $3b\neq 4$, then if $3b=3$ $\A$ is not $3$-compressible, while if $3b\neq 3$ the word $aba$ $3$-compresses $\A$ that it is not proper, as shown in Fig. \ref{fig:prop:2-3:2:1};
    \item if $3b=4$ and $Orb_b(3)\subseteq\{3,4\}$, then $\A$ is not $3$-compressible, as shown in Fig. \ref{fig:prop:2-3:2:2}.
        \begin{figure}[ht]%
            \centering
            \subfigure[][Case $3b\neq 4$.]{%
            \label{fig:prop:2-3:2:1}%
            {\footnotesize
            \begin{tikzpicture}[shorten >=1pt,node distance=5em,auto]

                \node[state] (q_0) {};
                \node[state] (q_1) [right of= q_0] {$1,3$};
                \node[state] (q_2) [right of= q_1,draw=none] {};
                \node[state] (q_3) [right of= q_2] {$1,3$};
                \node[state] (q_4) [below of= q_0] {$1$};
                \node[state] (q_5) [below of= q_2] {$1,3b$};
                \node[state,accepting] (q_6) [right of= q_5] {};
                \node[state,draw=none] (q_7) [right of= q_6] {};

                \path[->] (q_0) edge node {$a$} (q_1)
                          (q_0) edge node {$b$} (q_4)
                          (q_1) edge [loop above] node {$a$} (q_1)
                          (q_1) edge node {$b|3b=3$} (q_3)
                          (q_1) edge node {$b|3b\neq 3$} (q_5)
                          (q_3) edge [loop above] node {$a,b$} (q_3)
                          (q_4) edge [loop left] node {$b$} (q_4)
                          (q_4) edge node {$a$} (q_1)
                          (q_5) edge node {$a$} (q_6);

                \draw [->] (-.5,.5) -- (-.27,.27);

            \end{tikzpicture}}%
            }
            \hspace{20pt}%
            \subfigure[][Case $Orb_b(3)\subseteq\{3,4\}$ and $3b=4$.]{%
            \label{fig:prop:2-3:2:2}%
            {\footnotesize
            \begin{tikzpicture}[shorten >=1pt,node distance=5em,auto]

                \node[state] (q_0) {};
                \node[state] (q_1) [right of= q_0] {$1,3$};
                \node[state] (q_2) [right of= q_1] {$1,4$};
                \node[state] (q_3) [right of= q_2,draw=none] {};
                \node[state] (q_4) [below of= q_0] {$1$};

                \path[->] (q_0) edge node {$a$} (q_1)
                          (q_0) edge node {$b$} (q_4)
                          (q_1) edge [loop above] node {$a$} (q_1)
                          (q_1) edge [bend left=10] node {$b$} (q_2)
                          (q_2) edge [bend left=10] node {$a,b$} (q_1)
                          (q_4) edge [loop left] node {$b$} (q_4)
                          (q_4) edge node {$a$} (q_1);

                \draw [->] (-.5,.5) -- (-.27,.27);

            \end{tikzpicture}}}%
            \caption[]{3MSA for automata that do not satisfy condition $2.$ of Proposition \ref{proposition:2-3}.}%
            \label{fig:prop:2-3:2}%
        \end{figure}
  \end{enumerate}
  \item Let $x=2$,
      \begin{enumerate}
        \item if $y\not\in\{1,3\}$, then the word $ab$ $3$-compresses $\A$, and so it is not proper;
        \item if $y=1$, then if $3b\neq 4$, then $\A$ is not $3$-compressible, as shown in Fig. \ref{fig:prop:2-3:3:1}; while if $3b=4$ and $4b=3$, then $\A$ is not $3$-compressible, as shown in Fig. \ref{fig:prop:2-3:3:2};
            \begin{figure}[ht]%
                \centering
                \subfigure[][Case $3b\neq 4$.]{%
                \label{fig:prop:2-3:3:1}%
                {\footnotesize
                \begin{tikzpicture}[shorten >=1pt,node distance=5em,auto]

                    \node[state] (q_0) {};
                    \node[state] (q_1) [right of= q_0] {$1,3$};
                    \node[state] (q_2) [right of= q_1,draw=none] {};
                    \node[state] (q_3) [right of= q_2] {$2,3$};
                    \node[state] (q_4) [below of= q_0] {$2$};
                    \node[state] (q_5) [below of= q_2] {$2,3b$};
                    \node[state,accepting] (q_6) [right of= q_5] {};
                    \node[state] (q_7) [right of= q_3] {$1,3$};

                    \path[->] (q_0) edge node {$a$} (q_1)
                              (q_0) edge node {$b$} (q_4)
                              (q_1) edge [loop above] node {$a$} (q_1)
                              (q_1) edge node {$b|3b=3$} (q_3)
                              (q_1) edge node {$b|3b\neq 3$} (q_5)
                              (q_3) edge [loop above] node {$b$} (q_3)
                              (q_3) edge [bend left=10] node {$a$} (q_7)
                              (q_7) edge [bend left=10] node {$b$} (q_3)
                              (q_7) edge [loop above] node {$a$} (q_7)
                              (q_4) edge [loop left] node {$b$} (q_4)
                              (q_4) edge node {$a$} (q_1)
                              (q_5) edge node {$a$} (q_6);

                    \draw [->] (-.5,.5) -- (-.27,.27);

                \end{tikzpicture}}%
                }
                \hspace{20pt}%
                \subfigure[][Case $3b=4$ and $4b=3$.]{%
                \label{fig:prop:2-3:3:2}%
                {\footnotesize
                \begin{tikzpicture}[shorten >=1pt,node distance=5em,auto]

                    \node[state] (q_0) {};
                    \node[state] (q_1) [right of= q_0] {$1,3$};
                    \node[state] (q_2) [right of= q_1] {$2,4$};
                    \node[state] (q_3) [right of= q_2] {$2,3$};
                    \node[state] (q_4) [below of= q_0] {$2$};

                    \path[->] (q_0) edge node {$a$} (q_1)
                              (q_0) edge node {$b$} (q_4)
                              (q_1) edge [loop above] node {$a$} (q_1)
                              (q_1) edge [bend left=10] node {$b$} (q_2)
                              (q_2) edge [bend left=10] node {$b$} (q_3)
                              (q_2) edge [bend left=10] node {$a$} (q_1)
                              (q_3) edge [bend left=30] node {$a$} (q_1)
                              (q_3) edge [bend left=10] node {$b$} (q_2)
                              (q_4) edge [loop left] node {$b$} (q_4)
                              (q_4) edge node {$a$} (q_1);

                    \draw [->] (-.5,.5) -- (-.27,.27);

                \end{tikzpicture}}}%
                \caption[]{3MSA for automata that do not satisfy condition $3.$ of Proposition \ref{proposition:2-3} with $y=1$.}%
                \label{fig:prop:2-3:3}%
            \end{figure}
        \item if $y=3$ and $1b\not\in\{3,4\}$, then $\M(ab)=\{2,1b\}$, and $\M(aba)=\{1,3,1ba\}$, so the word $aba$ $3$-compresses $\A$ that it is not proper.
      \end{enumerate}
\end{enumerate}
The cases $x=3$ and $x=4$ reduce to case $2.$ and $3.$, respectively.

Conversely, suppose $x\in\{1,2,3,4\}$.
\begin{enumerate}
  \item Let $x=1$, $3b=4$ and $Orb_b(3)\not\subseteq\{3,4\}$, this implies $4b=3b^2\not\in\{3,4\}$. Then $\M(ab)=\{1,4\}$, $\M(ab^2)=\{1,4b\}$ and $\M(ab^2a)=\{1,3,4ba\}$, so $ab^2a$ $3$-compresses $\A$.
  \item Let $x=2$.
    \begin{enumerate}
      \item If $y=1$, $3b=4$ and $4b\neq 3$, then $\M(ab)=\{2,4\}$, $\M(ab^2)=\{2,4b\}$ and $\M(ab^2a)=\{1,3,4ba\}$, so $ab^2a$ $3$-compresses $\A$.
      \item If $y=3$ and $1b\in\{3,4\}$, then either the word $ab^2a$ or $ab^3a$ $3$-compresses $\A$, as shown in Fig. \ref{fig:prop:2-3:r}.
          \begin{figure}[ht]%
            \centering
            {\footnotesize
            \begin{tikzpicture}[shorten >=1pt,node distance=5em,auto]

                \node[state] (q_0) {};
                \node[state] (q_1) [right of= q_0] {$1,3$};
                \node[state] (q_2) [right of= q_1] {$2,1b$};
                \node[state] (q_3) [right of= q_2] {$2,1b^2$};
                \node[state,draw=none] (q_4) [right of= q_3] {};
                \node[state] (q_5) [right of= q_4] {$2,1b^3$};
                \node[state,accepting] (q_6) [right of= q_5] {};

                \path[->] (q_0) edge node {$a$} (q_1)
                          (q_1) edge node {$b$} (q_2)
                          (q_2) edge node {$b$} (q_3)
                          (q_3) edge node {$b|1b^2\in\{3,4\}$} (q_5)
                          (q_3) edge [bend right=25] node [swap] {$b|1b^2\not\in\{3,4\}$} (q_6)
                          (q_5) edge node {$a$} (q_6);

                \draw [->] (-.5,.5) -- (-.27,.27);

            \end{tikzpicture}}\\
            \caption[]{P3MSA for the case $b=[2,3]\backslash 2$ and $1b\in\{3,4\}$.}%
            \label{fig:prop:2-3:r}%
            \end{figure}
    \end{enumerate}
\end{enumerate}
The cases $x=3$ and $x=4$ reduce to cases $1.$ and $2.$, respectively.
\end{proof}

\FloatBarrier


The following lemma is straightforward.
\begin{lemma}\label{lemma:3-3:0}
Let $\A$ be a $(\bf{3},\bf{3})$ $3$-compressible automaton with $a=[1,2]\backslash 1$ and $b=[x,y]\backslash x$.
Then $x\neq 1$ and $\{x,y\}\neq \{1,2\}$.
\end{lemma}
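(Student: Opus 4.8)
The plan is to prove the contrapositive of each assertion: I will show that if $x=1$, or if $\{x,y\}=\{1,2\}$, then $\A$ admits no $3$-compressing word, contradicting $3$-compressibility. In both cases the argument is a direct computation inside the $3$MSA of $\A$, tracking which missing sets are reachable from $\varnothing$ and checking that they all have cardinality at most $1$.

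The elementary tool is the behaviour of a type-$\mathbf{3}$ letter $c=[p,q]\backslash r$ on a missing set $M$. From the definition of $\M(M,c)$ one sees that $\M(M,c)$ always contains the missing state $r$; that every $m\in M\setminus\{p,q\}$ contributes its image $mc$, since $m$ is its own unique preimage and no external state recovers it; and that the collapsed image $pc=qc$ is added if and only if both $p$ and $q$ already lie in $M$. I would record this once as the key computation, since the cancellation occurring when $M$ meets the kernel pair $\{p,q\}$ in only one element is exactly what prevents the missing set from growing.

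For the first claim, suppose $x=1$, so $a=[1,2]\backslash 1$ and $b=[1,y]\backslash 1$ both have missing state $1$ and both contain $1$ in their kernel pair. Then $\M(a)=\M(b)=\{1\}$, and applying the key computation to $M=\{1\}$ gives: since $1$ lies in each kernel pair but its partner ($2$ for $a$, $y$ for $b$) lies outside $M$, the collapsed image is not added and only $r=1$ survives, so $\M(\{1\},a)=\M(\{1\},b)=\{1\}$. Hence $\{\varnothing,\{1\}\}$ is closed under both letters and contains $\M(\varnothing)$, so every reachable missing set has size $\le 1$ and $\A$ is not $3$-compressible.

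For the second claim, by the first we may assume $x\neq 1$, so $\{x,y\}=\{1,2\}$ forces $b=[2,1]\backslash 2$. Now $a=[1,2]\backslash 1$ and $b=[1,2]\backslash 2$ share the kernel pair $\{1,2\}$, with missing states $1$ and $2$ respectively. A short check with the key computation shows $\M(\{1\},a)=\M(\{2\},a)=\{1\}$ and $\M(\{1\},b)=\M(\{2\},b)=\{2\}$, so $\{\varnothing,\{1\},\{2\}\}$ is closed under $a$ and $b$; again no reachable missing set reaches size $3$, contradicting $3$-compressibility. There is no genuine obstacle beyond applying the definition of $\M(M,c)$ carefully; the only point to watch is the cancellation rule for the collapsed image, which is precisely what keeps every reachable missing set a singleton.
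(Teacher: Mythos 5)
Your proof is correct: the paper omits the argument entirely (declaring the lemma straightforward), and your contrapositive computation --- showing that when $x=1$ the only reachable nonempty missing set is $\{1\}$, and when $b=[2,1]\backslash 2$ the reachable missing sets are $\varnothing$, $\{1\}$, $\{2\}$ --- is exactly the routine verification the authors had in mind. Your ``key computation'' for how a type-$\mathbf{3}$ letter acts on a missing set, including the cancellation when only one element of the kernel pair is already missing, correctly instantiates the definition of $\M(Q_1,c)$, so nothing is missing.
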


\begin{lemma}\label{lemma:3-3:1}
Let $\A$ be a $(\bf{3},\bf{3})$ $3$-compressible automaton with $a=[1,2]\backslash 1$ and $b=[2,y]\backslash 2$ with $y\neq 1$.
Then:
\begin{enumerate}
  \item $\{1ba,1b^2a,1b^3a\}\not\subseteq\{2,y\}$, or
  \item $Orb_a(1ba)\not\subseteq\{2,y\}$.
\end{enumerate}
\end{lemma}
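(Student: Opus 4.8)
The plan is to argue by contraposition: assuming that \emph{both} conclusions fail, i.e.\ that $\{1ba,1b^2a,1b^3a\}\subseteq\{2,y\}$ and $Orb_a(1ba)\subseteq\{2,y\}$, I will show that $\A$ is not $3$-compressible, contradicting the hypothesis. Everything rests on the elementary dynamics of missing sets under two type-$\mathbf{3}$ letters. Since $a=[1,2]\backslash 1$ is injective off $\{1,2\}$ and misses $1$, one has $\M(Q_1,a)=\{1\}\cup\{qa : q\in Q_1\setminus\{1,2\}\}$, with $1a$ adjoined precisely when $\{1,2\}\subseteq Q_1$; symmetrically for $b=[2,y]\backslash 2$ with the pair $\{2,y\}$ and the missing state $2$. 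From this I read off the escape criterion: a two-element set $\{p,q\}$ is sent by $a$ to a set of size $3$ iff $p,q\notin\{1,2\}$, and by $b$ iff $p,q\notin\{2,y\}$. Hence $3$-compressibility amounts to reaching, in the $3$MSA, a pair avoiding $\{1,2\}$ (then apply $a$) or avoiding $\{2,y\}$ (then apply $b$).

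First I would record the two computations that drive the argument: a one-line induction gives $\M(ab^k)=\{2,1b^k\}$, and consequently $\M(ab^ka)=\{1,1b^ka\}$, for every $k\ge 1$. Thus the only pairs containing neither $1$ nor $2$ that can arise must be produced from a set $\{1,s\}$ by applying $b$, which forces $s\notin\{2,y\}$. So the heart of the matter is the invariant that every reachable two-element missing set is either of the form $\{2,\ast\}$ (automatically safe, since $2\in\{1,2\}\cap\{2,y\}$ blocks both escapes) or of the form $\{1,s\}$ with $s\in\{2,y\}$.

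The key step, and the cleanest one, is the pigeonhole claim that $1b^ka\in\{2,y\}$ for \emph{all} $k\ge 1$, so that each transition $\{2,1b^k\}\xrightarrow{a}\{1,1b^ka\}$ lands in the safe family. The negation of conclusion (1) only grants this for $k\le 3$; to propagate it I use that $a$ is injective on $Q\setminus\{1,2\}$ while $\{2,y\}$ has just two elements. Put $u_k=1b^k$; none equals $2$, as $2\notin\Imm(b)$. If $u_1,u_2,u_3$ were three distinct states outside $\{1,2\}$, their $a$-images would be three distinct states, which cannot all lie in $\{2,y\}$; hence two of $u_1,u_2,u_3$ coincide (or some $u_k=1$). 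In either case the $b$-orbit of $1$ is periodic with all its values already among $\{u_1,u_2\}$, respectively among $\{1,u_1,u_2\}$ when it returns to $1$ within three steps, and the same count excludes any cycle longer than $3$. Therefore $\{1b^k:k\ge1\}$ is confined to these controlled values, and $1b^ka\in\{2,y\}$ for every $k$.

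It remains to close the family $\{1,s\}$ under $a$, that is, to show $2a\in\{2,y\}$ whenever $\{1,2\}$ is reached and $ya\in\{2,y\}$ whenever $\{1,y\}$ is reached (applying $b$ to $\{1,2\}$ or $\{1,y\}$ just returns the safe set $\{2,1b\}$). This is exactly where the negation of conclusion (2), $Orb_a(1ba)\subseteq\{2,y\}$, is used: the branch seeded at $1ba\in\{2,y\}$, being fixed or cycled by $a$ inside $\{2,y\}$, stays inside $\{1\}\times\{2,y\}$. I expect the main obstacle to be the residual subcase in which $a$ fixes $1ba$ and the \emph{other} element of $\{2,y\}$ becomes relevant because the associated set $\{1,\cdot\}$ is reachable; there one must again invoke validity — the fact that each letter has a single two-element kernel class — to conclude that either that second element is also mapped into $\{2,y\}$ or the offending set is unreachable (the degenerate configurations force $1b\in\{1,2\}$ and collapse the dynamics). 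Once the invariant holds, no reachable missing set has cardinality $3$, so $\A$ is not $3$-compressible, the desired contradiction; organising these last subcases cleanly, conveniently through a conditional $3$MSA in the style of the preceding propositions, is the only laborious part.
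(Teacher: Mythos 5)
Your strategy is the same as the paper's: negate both conclusions and show that the $3$MSA never reaches a $3$-element set, exploiting the kernel/image structure of the two letters. Your formula for $\M(Q_1,a)$, the computations $\M(ab^k)=\{2,1b^k\}$ and $\M(ab^ka)=\{1,1b^ka\}$, and the pigeonhole-plus-cancellation argument forcing the $b$-orbit of $1$ to be periodic (any coincidence among $1b,1b^2,1b^3$ cancels, via the single kernel class $\{2,y\}$ of $b$ and $2\notin\Imm(b)$, down to $1b=1$ or $1b^2=1$) all match what the paper does in its two cases $1b=1$ and $1b\neq 1$.

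The gap is at the step you yourself flag: closing the family $\{1,s\}$, $s\in\{2,y\}$, under $a$. You attribute this closure entirely to the negation of conclusion (2) and, for the residual subcase where $a$ fixes $1ba$, to the dichotomy \lq\lq either the second element of $\{2,y\}$ is also mapped into $\{2,y\}$ or the offending set is unreachable because $1b\in\{1,2\}$\rq\rq. That resolution only works for one of the two symmetric configurations. If $1ba=2$ and $2a=2$, then $1a=2a=2=1ba$ indeed forces $1b\in\{1,2\}$, hence $1b=1$, and $\{1,y\}$ is never reached. But if $1ba=y$ and $ya=y$, the set $\{1,2\}$ \emph{is} reachable (it equals $\M(ab)$ or $\M(ab^2)$, since $1b=1$ or $1b^2=1$), the hypothesis $Orb_a(1ba)=\{y\}\subseteq\{2,y\}$ says nothing about $2a$, and no degeneracy of $b$ removes $\{1,2\}$ from the reachable set. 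What actually closes this case is $2a=1a=1b^2a\in\{2,y\}$ (resp.\ $=1ba$ when $1b=1$), i.e.\ the negation of conclusion (1) combined with the periodicity you established --- a link you never state or use. The paper supplies exactly this in its case 2(b), pinning down $1a=2a=2$, $ya=y$, $1b=y$, $yb=1$ before exhibiting the $3$MSA. So the architecture is right and the preparatory lemmas are sound, but the decisive verification is asserted rather than proved, and the one mechanism you propose for it fails in the configuration that needs it most.
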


\begin{proof}
Let $\A$ be a $(\bf{3},\bf{3})$-automaton that does not satisfy the above conditions, we prove that it is not $3$-compressible or not proper.
\begin{enumerate}
  \item Let $1b=1$ and $Orb_a(1ba)\subseteq\{2,y\}$.
      \begin{enumerate}
        \item If $1ba=1a=2a=2$, then the 3MSA in Fig. \ref{fig:lemma:3-3:1:1:1} proves that $\A$ is not 3-compressible;
        \item If $1ba=1a=2a=y$, then $ya=1ba^2=2$ and the 3MSA in Fig. \ref{fig:lemma:3-3:1:1:2} proves that $\A$ is not 3-compressible.
            \begin{figure}[ht]%
                \centering
                \subfigure[][Case $1a=2$.]{%
                \label{fig:lemma:3-3:1:1:1}%
                {\footnotesize
                \begin{tikzpicture}[shorten >=1pt,node distance=5em,auto]

                    \node[state] (q_0) {};
                    \node[state] (q_1) [right of= q_0] {$2$};
                    \node[state] (q_2) [right of= q_1] {$1$};
                    \node[state] (q_3) [right of= q_2] {$1,2$};

                    \path[->] (q_0) edge node {$b$} (q_1)
                              (q_0) edge [bend right=30] node [swap] {$a$} (q_2)
                              (q_1) edge node {$a$} (q_2)
                              (q_1) edge [loop above] node {$b$} (q_1)
                              (q_2) edge node {$b$} (q_3)
                              (q_2) edge [loop above] node {$a$} (q_2)
                              (q_3) edge [loop above] node {$a,b$} (q_3);

                    \draw [->] (-.5,.5) -- (-.27,.27);

                \end{tikzpicture}}%
                }
                \hspace{20pt}%
                \subfigure[][Case $1a=y$.]{%
                \label{fig:lemma:3-3:1:1:2}%
                {\footnotesize
                \begin{tikzpicture}[shorten >=1pt,node distance=5em,auto]

                    \node[state] (q_0) {};
                    \node[state] (q_1) [right of= q_0] {$2$};
                    \node[state] (q_2) [right of= q_1] {$1$};
                    \node[state] (q_3) [right of= q_2] {$1,2$};
                    \node[state] (q_4) [right of= q_3] {$1,y$};

                    \path[->] (q_0) edge node {$b$} (q_1)
                              (q_0) edge [bend right=30] node [swap] {$a$} (q_2)
                              (q_1) edge node {$a$} (q_2)
                              (q_1) edge [loop above] node {$b$} (q_1)
                              (q_2) edge node {$b$} (q_3)
                              (q_2) edge [loop above] node {$a$} (q_2)
                              (q_3) edge [loop above] node {$b$} (q_3)
                              (q_3) edge [bend left=10] node {$a$} (q_4)
                              (q_4) edge [bend left=10] node {$a,b$} (q_3);

                    \draw [->] (-.5,.5) -- (-.27,.27);

                \end{tikzpicture}}}
                \caption[]{3MSA for automata that do not satisfy conditions 1. and 2. of Lemma \ref{lemma:3-3:1} with $1b=1$.}%
                \label{fig:lemma:3-3:1:1}%
            \end{figure}
      \end{enumerate}
  \item Let $1b\neq1$, $\{1ba,1b^2a,1b^3a\}\subseteq\{2,y\}$ and $Orb_a(1ba)\subseteq\{2,y\}$.
      If $1ba=1b^2a$ or $1b^2a=1b^3a$, then for all $h>0$ $1b^h\neq 2$, then $1=1b$, against the hypothesis.
      If $1b^2a=1b^3a$, as $1b^2\neq 2$, then $1b^2=1b^3$, and then $1=1b$, against the hypothesis.
      So, as $|\{1ba,1b^2a,1b^3a\}|\leq 2$, $1ba=1b^3a$, then $1b=1b^3$ and $1=1b^2$.
      \begin{enumerate}
        \item If $1ba=2$, then $1b^2a=y$, hence $1a=2a=y$, $ya=2$ and $1b=y$.
            The 3MSA in Fig. \ref{fig:lemma:3-3:1:2:1} proves that $\A$ is not 3-compressible;
        \item If $1ba=y$, then $1b^2a=1a=2a=2$.
            Moreover, $1ba^2=y$, otherwise $1ba^2=2$ gives the contradiction $1ba=1$, hence $ya=y$, $1b=y$ and $yb=1b^2=1$.
            The 3MSA in Fig. \ref{fig:lemma:3-3:1:2:2} proves that $\A$ is not 3-compressible.
            \begin{figure}[ht]%
                \centering
                \subfigure[][Case $1ba=2$.]{%
                \label{fig:lemma:3-3:1:2:1}%
                {\footnotesize
                \begin{tikzpicture}[shorten >=1pt,node distance=5em,auto]

                    \node[state] (q_0) {};
                    \node[state] (q_1) [below of= q_0] {$2$};
                    \node[state] (q_2) [right of= q_0] {$1$};
                    \node[state] (q_3) [right of= q_2] {$y,2$};
                    \node[state] (q_4) [right of= q_3] {$1,2$};
                    \node[state] (q_5) [below of= q_3] {$1,y$};

                    \path[->] (q_0) edge node {$b$} (q_1)
                              (q_0) edge node {$a$} (q_2)
                              (q_1) edge node [swap] {$a$} (q_2)
                              (q_1) edge [loop right] node {$b$} (q_1)
                              (q_2) edge node {$b$} (q_3)
                              (q_2) edge [loop above] node {$a$} (q_2)
                              (q_3) edge [bend left=10] node {$a,b$} (q_4)
                              (q_4) edge [bend left=10] node {$a$} (q_5)
                              (q_4) edge [bend left=10] node {$b$} (q_3)
                              (q_5) edge [bend left=10] node {$a$} (q_4)
                              (q_5) edge node {$b$} (q_3);

                    \draw [->] (-.5,.5) -- (-.27,.27);

                \end{tikzpicture}}%
                }
                \hspace{20pt}%
                \subfigure[][Case $1ba=y$.]{%
                \label{fig:lemma:3-3:1:2:2}%
                {\footnotesize
                \begin{tikzpicture}[shorten >=1pt,node distance=5em,auto]

                    \node[state] (q_0) {};
                    \node[state] (q_1) [below of= q_0] {$2$};
                    \node[state] (q_2) [right of= q_0] {$1$};
                    \node[state] (q_4) [right of= q_2] {$y,2$};
                    \node[state] (q_3) [below of= q_4] {$1,2$};
                    \node[state] (q_5) [right of= q_4] {$1,y$};

                    \path[->] (q_0) edge node {$b$} (q_1)
                              (q_0) edge node {$a$} (q_2)
                              (q_1) edge node [swap] {$a$} (q_2)
                              (q_1) edge [loop right] node {$b$} (q_1)
                              (q_2) edge node {$b$} (q_4)
                              (q_2) edge [loop above] node {$a$} (q_2)
                              (q_3) edge [loop right] node {$a$} (q_3)
                              (q_3) edge [bend left=10] node {$b$} (q_4)
                              (q_4) edge [bend left=10] node {$a$} (q_5)
                              (q_4) edge [bend left=10] node {$b$} (q_3)
                              (q_5) edge [loop above] node {$a$} (q_5)
                              (q_5) edge [bend left=10] node {$b$} (q_4);

                    \draw [->] (-.5,.5) -- (-.27,.27);

                \end{tikzpicture}}}
                \caption[]{3MSA for automata that do not satisfy conditions 1. and 2. of Lemma \ref{lemma:3-3:1} with $1b\neq1$.}%
                \label{fig:lemma:3-3:1:2}%
            \end{figure}
      \end{enumerate}
\end{enumerate}
\end{proof}

\begin{corollary}\label{corollary:3-3:1}
Let $\A$ be a $(\bf{3},\bf{3})$ $3$-compressible automaton with $a=[1,2]\backslash 1$ and $b=[x,1]\backslash x$ with $x\neq 2$.
Then:
\begin{enumerate}
  \item $\{xab,xa^2b,xa^3b\}\not\subseteq\{1,2\}$, or
  \item $Orb_b(xab)\not\subseteq\{1,2\}$.
\end{enumerate}
\end{corollary}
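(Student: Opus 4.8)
The plan is to deduce the statement from Lemma~\ref{lemma:3-3:1} by a duality that exchanges the two input letters and relabels the states. Structurally the two assertions are the same once the roles of $a$ and $b$ are swapped: Lemma~\ref{lemma:3-3:1} tracks the fixed missing state $1$ of the letter $a=[1,2]\backslash 1$ along the words $1b^{i}a$, whereas here one tracks the missing state $x$ of $b=[x,1]\backslash x$ along $xa^{i}b$. So I would build from the given $\A$ a companion automaton $\A'$ fitting the hypotheses of Lemma~\ref{lemma:3-3:1}, apply the lemma there, and transport the conclusion back.

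Concretely, choose any bijection $\psi\colon Q\to Q$ with $\psi(x)=1$ and $\psi(1)=2$ (possible since $x,1,2$ are distinct, using $x\neq 2$), and set $y:=\psi(2)$; injectivity of $\psi$ together with $x\neq 2$ gives $y=\psi(2)\neq\psi(x)=1$, so $y\neq1$, as Lemma~\ref{lemma:3-3:1} requires. Let $\A'$ arise from $\A$ by relabelling each state $q$ as $\psi(q)$ and simultaneously renaming the letters so that the transformation called $b$ in $\A$ is called $a$ in $\A'$ and the one called $a$ becomes $b$. Conjugating a transformation by $\psi$ carries its kernel classes and its missing states to their $\psi$-images, so the type is preserved up to this renaming: the old $b=[x,1]\backslash x$ becomes the new $a=[\psi(x),\psi(1)]\backslash\psi(x)=[1,2]\backslash1$, and the old $a=[1,2]\backslash1$ becomes the new $b=[\psi(1),\psi(2)]\backslash\psi(1)=[2,y]\backslash2$. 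Thus $\A'$ has exactly the shape required. Furthermore both operations leave the generated semigroup of transformations unchanged up to conjugation by $\psi$, and deficiency is a conjugation invariant, so $\A'$ is $3$-compressible because $\A$ is.

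It remains to read off the conclusion. In $\A'$ the letter $a$ acts as the old $b$ and $b$ as the old $a$, and the state $1$ of $\A'$ is $\psi(x)$; hence $1ba=\psi(xab)$, $1b^{2}a=\psi(xa^{2}b)$, $1b^{3}a=\psi(xa^{3}b)$ (computed in $\A'$), and $Orb_{a}(1ba)=\psi\bigl(Orb_{b}(xab)\bigr)$. Since $\{2,y\}=\psi(\{1,2\})$ and $\psi$ is bijective, the alternative $\{1ba,1b^{2}a,1b^{3}a\}\nsubseteq\{2,y\}$ for $\A'$ is equivalent to $\{xab,xa^{2}b,xa^{3}b\}\nsubseteq\{1,2\}$ for $\A$, and $Orb_{a}(1ba)\nsubseteq\{2,y\}$ is equivalent to $Orb_{b}(xab)\nsubseteq\{1,2\}$. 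Applying Lemma~\ref{lemma:3-3:1} to $\A'$ therefore delivers precisely the claimed disjunction for $\A$.

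The only delicate part is the bookkeeping of the relabelling: one must verify that a single bijection $\psi$ can realize both prescribed values and that the derived $y=\psi(2)$ lands outside $\{1\}$ for free, so that the hypothesis $y\neq1$ of Lemma~\ref{lemma:3-3:1} is met without extra assumptions. Once the correspondence $x\mapsto1,\,1\mapsto2,\,2\mapsto y$ and the letter swap are fixed, the preservation of types, of $3$-compressibility, and of all set inclusions is purely formal.
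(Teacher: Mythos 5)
Your proof is correct and is essentially the paper's own argument: the paper disposes of this corollary in one line, citing exactly the relabelling $a\leftrightarrow b$, $x\mapsto 1$, $1\mapsto 2$, $2\mapsto y$ applied to Lemma \ref{lemma:3-3:1}. You have merely made explicit the bijection $\psi$ and the verification that types, $3$-compressibility, and the two set inclusions are preserved, which the paper leaves implicit.
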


\begin{proof}
It is a straightforward consequence of the previous lemma, simply replacing $a$ with $b$, 1 with 2, 2 with $y$ and $x$ with 1.
\end{proof}

\begin{lemma}\label{lemma:3-3:2}
Let $\A$ be a proper $(\bf{3},\bf{3})$ $3$-compressible automaton with $a=[1,2]\backslash 1$ and $b=[x,y]\backslash x$ with $x\not\in\{1,2\}$ and $y\neq 1$.
Then all the following conditions hold:
\begin{enumerate}
  \item $1b\in\{1,2\}$, and
  \item $xa\in\{x,y\}$, and
  \item $Orb_b(1)\not\subseteq\{1,2\}$ or $Orb_a(x)\not\subseteq\{x,y\}$.
\end{enumerate}
\end{lemma}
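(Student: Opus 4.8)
The plan is to establish each of the three conditions by contraposition, showing that a violation forces $\A$ to be either not proper or not $3$-compressible. Throughout I would rely on the basic data $\M(a)=\{1\}$, $\M(b)=\{x\}$, on the fact that the only non-injectivity of $a$ (resp.\ $b$) is the identification $1a=2a$ (resp.\ $xb=yb$), and on the hypotheses $x\notin\{1,2\}$, $y\neq1$, which give $1\notin\{x,y\}$ and $x\notin\{1,2\}$; so $1$ is not collapsed by $b$ and $x$ is not collapsed by $a$.

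For condition $1$ I would first compute $\M(ab)=\M(\{1\},b)=\{x,1b\}$: since $1$ has the single $b$-preimage $1$, its image $1b$ is freshly missed, and $1b\neq x$ because $x\notin\Imm(b)$. If $1b\notin\{1,2\}$, then applying $a$ to $\{x,1b\}$ both $x$ and $1b$ lie outside the collapse pair $\{1,2\}$, so $xa$ and $1ba$ are freshly missed alongside $1$; these three states are pairwise distinct (the equality $xa=1ba$ would need $\{x,1b\}=\{1,2\}$ or $x=1b$, both excluded), whence $|\M(aba)|=3$. Thus the length-$3$ word $aba$ would $3$-compress $\A$, contradicting properness, so $1b\in\{1,2\}$. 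Condition $2$ is the mirror image obtained by exchanging the roles of $a$ and $b$: from $\M(ba)=\{1,xa\}$, if $xa\notin\{x,y\}$ then $\M(bab)=\{x,1b,xab\}$ consists of three distinct states, so $bab$ contradicts properness and $xa\in\{x,y\}$.

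For condition $3$ I would argue by contradiction, assuming both $Orb_b(1)\subseteq\{1,2\}$ and $Orb_a(x)\subseteq\{x,y\}$, and prove $\A$ is not $3$-compressible. Combined with conditions $1$ and $2$ and the single-collapse-pair structure, the induced actions are tightly constrained: $b$ either fixes $1$ or swaps $1\leftrightarrow2$ on $\{1,2\}$ (indeed $1b=2$ forces $2b=1$, since $1b=2b$ would make $\{1,2\}$ a second collapse pair of $b$), and symmetrically $a$ either fixes $x$ or swaps $x\leftrightarrow y$. The core is the invariant that every reachable missing set is $\varnothing$, a singleton inside $\{1,2,x,y\}$, a mixed pair $\{p,q\}$ with $p\in\{1,2\}$ and $q\in\{x,y\}$, or one of $\{1,2\},\{x,y\}$ — all of size at most $2$. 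In the generic situation $y\neq2$ I would verify closure of this family under $\tau_a$ and $\tau_b$ by a (conditional) $3$MSA: on a mixed pair the $\{1,2\}$-component maps into the surviving side of the collapse $1a=2a$ and contributes nothing beyond the forced state $1$, while the confinement hypotheses keep the $\{x,y\}$-component inside $\{x,y\}$ under $a$ (and inside $\{1,2\}$ under $b$), so the size stays $2$. As the accepting $\mathbf 3$-state is then never reached, $\A$ is not $3$-compressible, the desired contradiction.

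The main obstacle is the bookkeeping in condition $3$, and in particular the overlap case $y=2$, where the two collapse pairs $\{1,2\}$ and $\{x,y\}=\{x,2\}$ share the state $2$; here the clean separation used above breaks down and the various fix/swap combinations must be handled by distinct branches of the conditional $3$MSA. A further delicate point is that some transition values are a priori uncontrolled (for example $2b$ when $b$ fixes $1$, or $ya$ when $a$ fixes $x$); the argument must confirm that such states never actually enter a reachable missing set, so that their images are irrelevant to the bound. Carrying out this verification, and checking that the size-$\leq2$ family is genuinely closed in every fix/swap configuration, is where the real effort lies.
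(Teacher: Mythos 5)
Your argument takes essentially the same route as the paper: conditions 1 and 2 are obtained exactly as there, by computing $\M(ab)=\{x,1b\}$, $\M(ba)=\{1,xa\}$ and showing $|\M(aba)|=3$ resp.\ $|\M(bab)|=3$ against properness, and condition 3 is settled by the same four fix/swap subcases ($1b=1$ or $1b=2$ with $2b=1$, crossed with $xa=x$ or $xa=y$ with $ya=x$), each closed off by a 3MSA whose reachable missing sets all have size at most 2. The only caveat is that your candidate invariant family should not blanket-include $\{1,2\}$ and $\{x,y\}$ --- for $y\neq 2$ one has $\M(\{1,2\},b)\supseteq\{x,1b,2b\}$, which generically has size 3 --- but the genuinely reachable sets are only $\{1\}$, $\{x\}$ and the mixed pairs $\{1,x\}$, $\{1,y\}$, $\{2,x\}$, which do form a closed family, as you in effect anticipate when you note that the images of uncontrolled states such as $2b$ must be shown never to enter a reachable missing set.
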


\begin{proof}
Let $\A$ be a $(\bf{3},\bf{3})$-automaton that does not satisfy the above conditions, we prove that it is not $3$-compressible or not proper.
\begin{enumerate}
  \item If $1b\not\in\{1,2\}$, then $\A$ is not proper, in fact $\M(ab)=\{x,1b\}$ and $|\M(aba)|=3$; similarly
  \item if $1b\in\{1,2\}$ and $xa\not\in\{x,y\}$, then $\A$ is not proper, in fact $\M(ba)=\{1,xa\}$ and $|\M(bab)|=3$.
  \item If $1b\in\{1,2\}$, $xa\in\{x,y\}$, $Orb_b(1)\subseteq\{1,2\}$ and $Orb_a(x)\subseteq\{x,y\}$, then
      \begin{enumerate}
        \item if $1b=1$ and $xa=x$, the 3MSA of $\A$ is in Fig. \ref{fig:lemma:3-3:2:2:1};
        \item $1b=1$ and $xa=y$ (and then $ya=x$), the 3MSA of $\A$ is in Fig. \ref{fig:lemma:3-3:2:2:2};
        \item $1b=2$ (and then $2b=1$) and $xa=x$, the 3MSA of $\A$ is in Fig. \ref{fig:lemma:3-3:2:2:3};
        \item $1b=2$ (and then $2b=1$) and $xa=y$ (and then $ya=x$) the 3MSA of $\A$ is in Fig. \ref{fig:lemma:3-3:2:2:4}.
      \end{enumerate}
      \begin{figure}[ht]%
        \centering
        \subfigure[][$1b=1$ and $xa=x$.]{%
        \label{fig:lemma:3-3:2:2:1}%
        {\footnotesize
        \begin{tikzpicture}[shorten >=1pt,node distance=5em,auto]

            \node[state] (q_0) {};
            \node[state] (q_1) [right of= q_0] {$1$};
            \node[state] (q_2) [below of= q_0] {$x$};
            \node[state] (q_3) [right of= q_2] {$1,x$};
            \node[state] (q_4) [below of= q_3,draw=none] {};

            \path[->] (q_0) edge node {$a$} (q_1)
                      (q_0) edge node {$b$} (q_2)
                      (q_1) edge [loop above] node {$a$} (q_1)
                      (q_1) edge node {$b$} (q_3)
                      (q_2) edge [loop below] node {$b$} (q_2)
                      (q_2) edge node {$a$} (q_3)
                      (q_3) edge [loop below] node {$a,b$} (q_3)(q_1) edge [loop right,draw=none] node {} (q_1);

            \draw [->] (-.5,.5) -- (-.27,.27);

        \end{tikzpicture}}%
        }
        \hspace{10pt}%
        \subfigure[][$1b=1$, $xa=y$ and $ya=x$.]{%
        \label{fig:lemma:3-3:2:2:2}%
        {\footnotesize
        \begin{tikzpicture}[shorten >=1pt,node distance=5em,auto]

            \node[state] (q_0) {};
            \node[state] (q_1) [right of= q_0] {$1$};
            \node[state] (q_2) [below of= q_0] {$x$};
            \node[state] (q_3) [right of= q_2] {$1,x$};
            \node[state] (q_4) [below of=q_3] {$1,y$};

            \path[->] (q_0) edge node {$a$} (q_1)
                      (q_0) edge node {$b$} (q_2)
                      (q_1) edge [loop above] node {$a$} (q_1)
                      (q_1) edge node {$b$} (q_3)
                      (q_2) edge [loop below] node {$b$} (q_2)
                      (q_2) edge node [swap] {$a$} (q_4)
                      (q_3) edge [loop left] node {$b$} (q_3)
                      (q_3) edge [bend left=10] node {$a$} (q_4)
                      (q_4) edge [bend left=10] node {$a,b$} (q_3)
                      (q_1) edge [loop right,draw=none] node {} (q_1);

            \draw [->] (-.5,.5) -- (-.27,.27);

        \end{tikzpicture}}}
        \hspace{10pt}%
        \subfigure[][$1b=2$, $2b=1$ and $xa=x$.]{%
        \label{fig:lemma:3-3:2:2:3}%
        {\footnotesize
        \begin{tikzpicture}[shorten >=1pt,node distance=5em,auto]

            \node[state] (q_0) {};
            \node[state] (q_1) [right of= q_0] {$1$};
            \node[state] (q_2) [below of= q_0] {$x$};
            \node[state] (q_3) [right of= q_2] {$2,x$};
            \node[state] (q_4) [below of=q_3] {$1,x$};

            \path[->] (q_0) edge node {$a$} (q_1)
                      (q_0) edge node {$b$} (q_2)
                      (q_1) edge [loop above] node {$a$} (q_1)
                      (q_1) edge node {$b$} (q_3)
                      (q_2) edge [loop below] node {$b$} (q_2)
                      (q_2) edge node [swap] {$a$} (q_4)
                      (q_3) edge [bend right=10] node [swap] {$a,b$} (q_4)
                      (q_4) edge [bend right=10] node [swap] {$b$} (q_3)
                      (q_4) edge [loop left] node {$a$} (q_4)(q_1) edge [loop right,draw=none] node {} (q_1);

            \draw [->] (-.5,.5) -- (-.27,.27);

        \end{tikzpicture}}%
        }
        \hspace{10pt}%
        \subfigure[][$1b=2$, $2b=1$, $xa=y$ and $ya=x$.]{%
        \label{fig:lemma:3-3:2:2:4}%
        {\footnotesize
        \begin{tikzpicture}[shorten >=1pt,node distance=5em,auto]

            \node[state] (q_0) {};
            \node[state] (q_1) [right of= q_0] {$1$};
            \node[state] (q_2) [below of= q_0] {$x$};
            \node[state] (q_3) [right of= q_2] {$2,x$};
            \node[state] (q_4) [below of=q_2] {$1,y$};
            \node[state] (q_5) [right of=q_4] {$1,x$};

            \path[->] (q_0) edge node {$a$} (q_1)
                      (q_0) edge node [swap] {$b$} (q_2)
                      (q_1) edge [loop above] node {$a$} (q_1)
                      (q_1) edge node {$b$} (q_3)
                      (q_2) edge [loop right] node {$b$} (q_2)
                      (q_2) edge node [swap] {$a$} (q_4)
                      (q_3) edge [bend left=10] node {$a$} (q_4)
                      (q_3) edge [bend left=10] node {$b$} (q_5)
                      (q_4) edge [bend left=10] node {$b$} (q_3)
                      (q_4) edge [bend left=10] node {$a$} (q_5)
                      (q_5) edge [bend left=10] node {$a$} (q_4)
                      (q_5) edge [bend left=10] node {$b$} (q_3)(q_1) edge [loop right,draw=none] node {} (q_1);

            \draw [->] (-.5,.5) -- (-.27,.27);

        \end{tikzpicture}}}\\
        \caption[]{3MSA for automata that do not satisfy conditions 3. of Lemma \ref{lemma:3-3:2}}%
        \label{fig:lemma:3-3:2:2}%
      \end{figure}
      In all the subcases, $\A$ is not $3$-compressible.
\end{enumerate}
\end{proof}

\begin{proposition}\label{proposition:3-3}
Let $\A$ be a $(\bf{3},\bf{3})$-automaton with $a=[1,2]\backslash 1$ and $b=[x,y]\backslash x$.
Then $\A$ is proper and $3$-compressible if, and only if, one of the following conditions holds:
\begin{enumerate}
  \item $b=[2,y]\backslash 2$, $y\neq 1$, and either $\{1ba,1b^2a,1b^3a\}\not\subseteq\{2,y\}$ or $Orb_a(1ba)\not\subseteq\{2,y\}$;
  \item $b=[x,1]\backslash x$, $x\neq 2$, and either $\{xba,xb^2a,xb^3a\}\not\subseteq\{1,2\}$ or $Orb_b(xab)\not\subseteq\{1,2\}$;
  \item $b=[x,y]\backslash x$, $x\not\in\{1,2\}$, $y\neq 1$, $1b\in\{1,2\}$, $xa\in\{x,y\}$, and either $Orb_b(1)\not\subseteq\{1,2\}$ or $Orb_a(x)\not\subseteq\{x,y\}$.
\end{enumerate}
Moreover, if $\mathcal{A}$ is proper and $3$-compressible, then one of the words $abab$ or $ab^2ab$ or $ab^3ab$ or $aba^2b$ or $aba^3b$ or $baba$ or $ba^2ba$ or $ba^3ba$ or $bab^2a$ or $bab^3a$ $3$-compresses $\A$.
\end{proposition}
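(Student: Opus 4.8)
The plan is to match each shape of $b$ with the one of the three preceding results that governs it, and then to add the single ingredient those results omit: an explicit compressing word. By Lemma~\ref{lemma:3-3:0} we may assume $x\neq 1$ and $\{x,y\}\neq\{1,2\}$, so exactly one of three situations occurs: $x=2$ (forcing $y\neq 1$); or $x\notin\{1,2\}$ with $y=1$; or $x\notin\{1,2\}$ with $y\neq 1$. These are the hypotheses of Lemma~\ref{lemma:3-3:1}, Corollary~\ref{corollary:3-3:1} and Lemma~\ref{lemma:3-3:2}, matching conditions $1.$, $2.$ and $3.$. The necessity direction is then immediate: if $\A$ is proper and $3$-compressible, the result matching the shape of $b$ applies verbatim --- Lemma~\ref{lemma:3-3:1} and Corollary~\ref{corollary:3-3:1} use only $3$-compressibility and yield the orbit alternatives of $1.$ and $2.$, while Lemma~\ref{lemma:3-3:2} additionally uses properness and yields $1b\in\{1,2\}$, $xa\in\{x,y\}$ and the orbit alternative of $3.$.

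\emph{Sufficiency, properness.} Conversely, assume one of $1.$--$3.$; I must exhibit a word of length at least $4$ among the listed ten that reaches deficiency $3$, and rule out deficiency $3$ for every word of length $3$. The latter is the easy half. In Cases $1.$ and $2.$ a direct missing-set computation shows every length-$3$ word has deficiency at most $2$: beginning from $\M(a)=\{1\}$ or $\M(b)=\{x\}$, each further letter enlarges the missing set by at most one state, since the only coincidence created by $a$ lies on $\{1,2\}$ and the only one created by $b$ lies on $\{x,y\}$, and for these particular shapes the two coincidences do not stack. In Case $3.$ the hypotheses $1b\in\{1,2\}$ and $xa\in\{x,y\}$ are exactly what give $\mathrm{df}(aba)\leq 2$ and $\mathrm{df}(bab)\leq 2$, as already computed inside the proof of Lemma~\ref{lemma:3-3:2}, and the other length-$3$ words are bounded identically.

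\emph{Sufficiency, compression.} For $3$-compressibility I track the missing set along a partial $3$MSA. In Case $1.$ I start from $\M(a)=\{1\}$ and $\M(ab)=\{2,1b\}$ and branch on how the orbit alternative is met. If $1b^{k}a\notin\{2,y\}$ for some $k\leq 3$, iterating $b$ first keeps the image outside the trap and $ab^{k}ab$ creates a third missing state; if instead $1ba^{k}\notin\{2,y\}$, iterating $a$ does the same and $aba^{k}b$ works. As $\{2,y\}$ has only two elements, an orbit that ever leaves it does so within three steps, so $k\leq 3$ always suffices; these give exactly the five words $abab$, $ab^2ab$, $ab^3ab$, $aba^2b$, $aba^3b$. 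Case $2.$ is the mirror of Case $1.$ under the $a\leftrightarrow b$ duality used to derive Corollary~\ref{corollary:3-3:1}, producing $baba$, $ba^2ba$, $ba^3ba$, $bab^2a$, $bab^3a$. In Case $3.$ the behaviour fixed by $1b\in\{1,2\}$ and $xa\in\{x,y\}$ traps one side, and I split on the two disjuncts: $Orb_b(1)\not\subseteq\{1,2\}$ lets an $ab^{k}$-prefix escape through state $1$ within three steps, while $Orb_a(x)\not\subseteq\{x,y\}$ lets a $ba^{k}$-prefix escape through $x$, and each subcase is then routed to one of the ten words.

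\emph{Main obstacle.} The real work is the bookkeeping of sufficiency. For every subcase one must verify, through the missing-set computation encoded in the P3MSA, that the chosen word reaches three genuinely distinct missing states, watching for the degenerate configurations in which the tracked state prematurely meets $1$, $2$, $x$ or $y$, or in which an orbit is short. Isolating, in each case, the minimal orbit information that forces the third coincidence, and confirming that the resulting word is one of the ten, is where the care lies; the diagrams serve precisely to make these finitely many checks transparent.
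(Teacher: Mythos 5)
Your proposal follows essentially the same route as the paper: necessity is delegated to Lemma \ref{lemma:3-3:0}, Lemma \ref{lemma:3-3:1}, Corollary \ref{corollary:3-3:1} and Lemma \ref{lemma:3-3:2} exactly as the paper does, and sufficiency is established by tracking missing sets along the same case split, with the pigeonhole observation that an orbit leaving a two-element set does so within three steps and the $a\leftrightarrow b$ duality for case 2. Your explicit verification that no length-3 word has deficiency 3 is a small addition the paper leaves implicit, but it sits inside the same framework and does not change the argument.
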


\begin{proof}
If $\A$ is a $3$-compressible $(\bf{3},\bf{3})$-automaton with $a=[1,2]\backslash 1$ and $b=[x,y]\backslash x$, then, from the previous lemmata and corollary, one of the above conditions must hold.

Conversely, we find for any automaton satisfying conditions $1.-3.$ a (short) $3$-compressing word.
\begin{enumerate}
  \item Let $b=[2,y]\backslash 2$, $y\neq 1$,
      \begin{enumerate}
        \item if $\{1ba,1b^2a,1b^3a\}\not\subseteq\{2,y\}$, then either the word $abab$ or $ab^2ab$ or $ab^3ab$ $3$-compresses $\A$, since $\M(aba)=\{1,1ba\}$, $\M(ab^2a)=\{1,1b^2a\}$ and $\M(ab^3a)=\{1,1b^3a\}$;
        \item if $Orb_a(1ba)\not\subseteq\{2,y\}$, then $\{1ba,1ba^2,1ba^3\}\not\subseteq\{2,y\}$ and either the word $abab$ or $aba^2b$ or $aba^3b$ $3$-compresses $\A$, since $\M(aba)=\{1,1ba\}$, $\M(aba^2)=\{1,1ba^2\}$ and $\M(ab^3a)=\{1,1ba^3\}$.
      \end{enumerate}
  \item Let $b=[x,1]\backslash x$, $x\neq 2$, and either $\{xba,xb^2a,xb^3a\}\not\subseteq\{1,2\}$ or $Orb_b(xab)\not\subseteq\{1,2\}$.
      This case reduces to the previous one replacing $a$ with $b$, 1 with 2, 2 with $y$ and $x$ with 1, then either the word $baba$ or $ba^2ba$ or $ba^3ba$ or $bab^2a$ or $bab^3a$ $3$-compresses $\A$.
  \item Let $b=[x,y]\backslash x$, $x\not\in\{1,2\}$, $y\neq 1$, $1b\in\{1,2\}$, $xa\in\{x,y\}$.
      If $Orb_b(1)\not\subseteq\{1,2\}$, then $1b=2$ and $2b\not\in\{1,2\}$, and so the word $ab^2a$ $3$-compresses $\A$, as $\M(ab^2)=\{x,2b\}$; similarly, if $Orb_a(x)\not\subseteq\{x,y\}$,  exchanging $b$ and $a$ and so $x$ with 1 and $y$ with 2, the word $ba^2b$ $3$-compresses $\A$.
\end{enumerate}
\end{proof}

\FloatBarrier


\begin{lemma}\label{lemma:3-4}
Let $\A$ be a $(\bf{3},\bf{4})$ $3$-compressible automaton with $a=[1,2]\backslash 1$ and $b=[x,y]\backslash z,\ zb=x$.
If $\A$ is proper then all the following conditions hold:
\begin{enumerate}
  \item $\{1,2\}\cap\{x,z\}\neq \emptyset$,
  \item $z\in\{1,2\}$ or $\{1,za\}\cap\{x,y\}\neq \emptyset$,
  \item $\{1,1b\}\cap\{x,y\}\neq \emptyset$,
  \item $1\in \{x,y\}$ or $\{z,1b\}\cap\{1,2\}\neq \emptyset$.
\end{enumerate}
\end{lemma}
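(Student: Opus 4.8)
The plan is to establish the four conditions simultaneously by showing that each one is exactly the statement that a particular word of length $3$ fails to $3$-compress $\A$. Since a proper automaton is $3$-compressed by no word of length $3$, all four conditions will then follow at once. The four words I would single out are $b^2a$, $bab$, $ab^2$ and $aba$, matched to conditions $1.$, $2.$, $3.$ and $4.$ respectively.

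The computational engine is a single increment lemma describing how $\M$ grows under one letter. Write $m_a=1$, $m_b=z$ for the structurally missing states and $P_a=\{1,2\}$, $P_b=\{x,y\}$ for the unique kernel pairs. I would first observe that each letter $c\in\{a,b\}$ is a bijection of $Q\setminus\{m_c\}$ onto itself together with the single merge $P_c$. Hence, for every $M\subseteq Q$, the identity $|\M(M,c)|=|Q|-|(Q\setminus M)c|$ combined with $|Sc|=|S|-[\,P_c\subseteq S\,]$ gives $|\M(M,c)|=|M|+[\,P_c\cap M=\varnothing\,]$; and when $P_c\cap M=\varnothing$ one reads off the explicit set $\M(M,c)=\{m_c\}\cup Mc$. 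In words: applying $c$ raises the deficiency by exactly one precisely when the merge pair of $c$ still lies entirely outside the current missing set, and in that case the new missing set is the image of the old one plus the structurally missing state. Because deficiency accumulates one letter at a time, a word of length $3$ is $3$-compressing if and only if all three of its letters increment, in which case the final missing set automatically has three (distinct) elements.

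With this in hand the four computations are immediate, using $\M(uv)=\M(\M(u),v)$. For $b^2a$: $\M(b)=\{z\}$ and, since $z\notin\{x,y\}$, $\M(b^2)=\{z\}\cup\{z\}b=\{z,x\}$ always; thus $b^2a$ increments a third time iff $\{1,2\}\cap\{z,x\}=\varnothing$, so properness forces $\{1,2\}\cap\{x,z\}\neq\varnothing$, which is condition $1.$ For $bab$: the second letter increments iff $z\notin\{1,2\}$, giving $\M(ba)=\{1,za\}$, and the third iff $\{x,y\}\cap\{1,za\}=\varnothing$; negating yields condition $2.$ For $ab^2$ and $aba$ the second letter increments iff $1\notin\{x,y\}$, giving $\M(ab)=\{z,1b\}$; then $ab^2$ increments again iff $\{x,y\}\cap\{z,1b\}=\varnothing$, i.e. $1b\notin\{x,y\}$, and $aba$ iff $\{1,2\}\cap\{z,1b\}=\varnothing$; negating these gives conditions $3.$ and $4.$ In the cases where the second letter can fail to increment (namely $bab$, $ab^2$, $aba$), that branch cannot reach deficiency $3$ at all, and it is exactly covered by the ``easy'' disjunct of the corresponding condition.

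The only real content is the increment lemma, and its one delicate point is that a freshly missed state might coincide with a merge point and be silently recovered, which would spoil the count; this is precisely what the counting identity $|Sc|=|S|-[\,P_c\subseteq S\,]$ handles uniformly, and it is here that I use the hypothesis that $a$ and $b$ have deficiency exactly one (so each has a single kernel pair and a single missing state, and in particular $m_b b=x\neq xb$, ruling out the degenerate coincidence). After that, the proof is pure bookkeeping: the main care is simply to pair each word with the right condition and to check the three intermediate missing sets $\{z,x\}$, $\{1,za\}$ and $\{z,1b\}$.
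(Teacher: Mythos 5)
Your proposal is correct and follows essentially the same route as the paper: the paper's proof consists precisely of observing that failure of conditions 1--4 makes $b^2a$, $bab$, $ab^2$, $aba$ respectively $3$-compressing, with the same intermediate missing sets $\{x,z\}$, $\{1,za\}$, $\{z,1b\}$. Your explicit increment lemma just makes rigorous the bookkeeping that the paper leaves implicit.
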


\begin{proof}
If $\{1,2\}\cap\{x,z\}=\emptyset$, then $|\M(b^2a)|=3$; if $z\not\in\{1,2\}$ and $\{1,za\}\cap\{x,y\}=\emptyset$, then $|\M(bab)|=3$; if $\{1,1b\}\cap\{x,y\}=\emptyset$, then $|\M(ab^2)|=3$; if $1\not\in \{x,y\}$ and $\{z,1b\}\cap\{1,2\}=\emptyset$, then $|\M(aba)|=3$.
So each automaton that does not satisfy one of the conditions of the lemma is $3$-compressed by a word of length $3$, and then it is not proper.
\end{proof}

\begin{corollary}\label{corollary:3-4}
Let $\A$ be a \emph{proper} $(\bf{3},\bf{4})$ $3$-compressible automaton with $a=[1,2]\backslash 1$ and $b=[x,y]\backslash z$, $zb=x$.
The following conditions hold:
\begin{enumerate}
  \item $b=[x,y]\backslash 1$, or
  \item $b=[x,y]\backslash 2$ and $\{1,1b\}\cap\{x,y\}\neq\emptyset$, or
  \item $b=[1,y]\backslash z$ or $b=[2,1]\backslash z$ and in both cases $z\neq 2$.
\end{enumerate}
\end{corollary}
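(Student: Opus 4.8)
The plan is to read the corollary off Lemma~\ref{lemma:3-4} by a case analysis on the unique state $z$ missing from $\Imm(b)$, keeping in mind two structural facts about a type-$\mathbf{4}$ letter $b=[x,y]\backslash z$: the three states $x,y,z$ are pairwise distinct, and $b$ is injective away from its single nontrivial kernel block $\{x,y\}$. By Lemma~\ref{lemma:3-4} all four of its conditions are available throughout.

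First I would dispose of the two easy values of $z$. If $z=1$ there is nothing to prove, since this is exactly alternative~1. If $z=2$, then alternative~2 asks precisely for $\{1,1b\}\cap\{x,y\}\neq\emptyset$, which is condition~3 of the lemma; so this case is immediate as well.

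The substance is the case $z\notin\{1,2\}$, in which I must land in alternative~3. Here $z\neq 2$ holds automatically, so it remains to show that $x=1$, or else $x=2$ and $y=1$. Condition~1 of the lemma, $\{1,2\}\cap\{x,z\}\neq\emptyset$, forces $x\in\{1,2\}$ because $z\notin\{1,2\}$. If $x=1$ we are done, landing in the first form $b=[1,y]\backslash z$.

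I expect the only real obstacle to be the remaining subcase $x=2$, where I have to prove $y=1$. The plan is to argue by contradiction: assume $y\neq 1$. Then $1\notin\{x,y\}=\{2,y\}$ and $1\neq z$, so $1$ lies outside the kernel block and $1b$ is an ordinary injective image. Condition~3 then gives $1b\in\{2,y\}$, while condition~4 (whose first disjunct $1\in\{x,y\}$ fails, and whose $z$ lies outside $\{1,2\}$) gives $1b\in\{1,2\}$. Since $y\notin\{1,2\}$, the intersection $\{2,y\}\cap\{1,2\}$ equals $\{2\}$, forcing $1b=2$. But $zb=x=2$ as well, so the two distinct singleton-class states $1$ and $z$ would both be sent to $2$, contradicting the injectivity of $b$ off $\{2,y\}$. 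Hence $y=1$, so $b=[2,1]\backslash z$ with $z\neq 2$, placing us in the second form of alternative~3 and completing the case analysis. I would remark that only conditions~1, 3 and 4 of Lemma~\ref{lemma:3-4} are actually used.
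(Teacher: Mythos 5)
Your proposal is correct and follows the same route as the paper, which simply states that the corollary is a straightforward consequence of Lemma~\ref{lemma:3-4}; you have merely filled in the case analysis the authors left implicit. In particular, your handling of the only nontrivial subcase ($z\notin\{1,2\}$, $x=2$, assuming $y\neq 1$, deriving $1b=2=zb$ from conditions~3 and~4 and contradicting the injectivity of $b$ off the kernel block $\{x,y\}$) is sound.
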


\begin{proof}
It is a straightforward consequence of the previous lemma.
\end{proof}

\begin{proposition}\label{proposition:3-4}
Let $\A$ be a $(\bf{3},\bf{4})$-automaton with $a=[1,2]\backslash 1$ and $b=[x,y]\backslash z,\ zb=x$. Then $\A$ is proper and $3$-compressible if, and only if, the following conditions hold:
\begin{enumerate}
  \item if $b=[x,y]\backslash 1$, then $Orb_a(x)\not\subseteq\{x,y\}$,
  \item if $z=2$ and
    \begin{enumerate}
      \item $b=[1,y]\backslash 2$, then $Orb_a(2)\not\subseteq\{2,y\}$ or $2ab\not\in\{1,y\}$,
      \item $b=[x,1]\backslash 2$, then $\{2a,xa\}\neq \{2,x\}$ or $\{2ab,xab\}\neq \{1,x\}$
      \item $b=[x,y]\backslash 2$, $1b=y$, and $1\not\in\{x,y\}$, then $\{xa,ya\}\neq \{x,y\}$,
    \end{enumerate}
  \item if $z\not\in\{1,2\}$ and
    \begin{enumerate}
      \item $b=[1,y]\backslash z$ and $y\neq 2$, then $za\neq z$,
      \item $b=[x,y]\backslash z$ and $\{x,y\}=\{1,2\}$, then $|Orb_a(z)|>2$ or $zab\not\in\{1,2\}$.
   \end{enumerate}
\end{enumerate}
Moreover, if $\mathcal{A}$ is proper and $3$-compressible, then either the word $b^2ab^2$ or $b^2a^2b^2$ or $b^2a^3b^2$ or $b^2abab^2$ $3$-compresses $\A$.
\end{proposition}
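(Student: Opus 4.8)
The plan is to follow the same two-sided scheme used in the previous propositions, relying on Corollary \ref{corollary:3-4} to organise the necessity part and on explicit missing-set computations (encoded in P3MSAs) for the sufficiency part. For the ``only if'' direction I would first observe that, since $\A$ is proper and $3$-compressible, Corollary \ref{corollary:3-4} already forces $b$ into one of three shapes according to the value of the missing state $z$: either $z=1$ (case 1), or $z=2$ together with $\{1,1b\}\cap\{x,y\}\neq\emptyset$ (case 2), or $z\notin\{1,2\}$ with $b=[1,y]\backslash z$ or $b=[2,1]\backslash z$ (case 3). Within each shape the remaining orbit conditions are then shown to be necessary by contraposition: assuming the relevant condition fails, I would build the $3$MSA of $\A$ and check that the reachable missing sets stay confined to cardinality $\le 2$, so that no word attains deficiency $3$, contradicting $3$-compressibility.

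The central computation underpinning everything is that, for $b=[x,y]\backslash z$ with $zb=x$, one has $\M(b)=\{z\}$ and $\M(b^2)=\{x,z\}$; here the identity $\M(b^2)=\{x,z\}$ uses that a deficiency-$1$ letter cannot have $xb=yb=x$, so $x$ has $z$ as its only $b$-preimage. This is exactly why all four candidate compressing words are of the symmetric form $b^2\,w\,b^2$ with $w\in\{a,a^2,a^3,aba\}$: the leading $b^2$ normalises the missing set to the two states $\{x,z\}$, independently of the permutation part of $b$. For the necessity diagrams I would likewise track $\M(Q_1,a)$ and $\M(Q_1,b)$ starting from $\{x,z\}$ (or from the relevant singletons), splitting into conditional branches exactly when some $qa$ or $qb$ may or may not coincide with a state already present, as in the conditional $3$MSAs of the earlier sections.

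For the ``if'' direction I would treat the three cases separately and, inside each, run the middle block $w$ forward from $\M(b^2)=\{x,z\}$. In case 1 ($z=1$, $\M(b^2)=\{1,x\}$) the hypothesis $Orb_a(x)\not\subseteq\{x,y\}$ guarantees that some $xa^k\notin\{x,y\}$, so one of $a,a^2,a^3$ carries $x$ to a fresh state while $1$ remains missing, and the closing $b^2$ separates the images into three distinct missing states. Cases 2 and 3 are handled analogously: the subcase conditions ($Orb_a(2)\not\subseteq\{2,y\}$ or $2ab\notin\{1,y\}$; $\{2a,xa\}\neq\{2,x\}$ or $\{2ab,xab\}\neq\{1,x\}$; $\{xa,ya\}\neq\{x,y\}$; $za\neq z$; $|Orb_a(z)|>2$ or $zab\notin\{1,2\}$) are precisely what ensures that the middle word moves at least one of the two currently missing states to a position from which the final $b^2$ produces a third missing state, and I expect the word $b^2abab^2$ to be reserved for the situations where a single block $a^k$ is not enough and one intermediate $b$ is needed to break a coincidence. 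Each verification would be presented as a (conditional) P3MSA reaching the $\mathbf{3}$-token state.

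The main obstacle I anticipate is bookkeeping rather than conceptual depth: exhaustively covering every sub-subcase left open by Corollary \ref{corollary:3-4} without gaps, and, for the necessity part, arguing that the drawn $3$MSAs are \emph{complete} --- that is, that every $a$- and $b$-transition out of each reachable cardinality-$\le 2$ state has been accounted for (including the conditional splits) --- so that the absence of a path to the $\mathbf{3}$ state genuinely certifies non-$3$-compressibility. The delicate points are the sub-subcases where $x$, $y$ or $z$ coincides with $1$ or $2$, since there the generic identity $\M(b^2)=\{x,z\}$ and the action of $a=[1,2]\backslash 1$ on $\{x,z\}$ interact, and one must check case by case that the announced word from the list indeed reaches deficiency $3$.
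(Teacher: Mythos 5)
Your plan mirrors the paper's proof: necessity is organised via Corollary \ref{corollary:3-4} and established by contraposition with complete (conditional) $3$MSAs, while sufficiency is a case-by-case missing-set computation that exploits $\M(b)=\{z\}$ and $\M(b^2)=\{x,z\}$ exactly as in the paper's P3MSAs, with the four listed words (or factors of them) covering the branches and $b^2a^3b^2$ indeed reserved for the stubborn subcase $b=[x,1]\backslash 2$ with $2ab=1$, $xab=x$. The substance of the argument is the exhaustive sub-case verification you defer to ``bookkeeping'', which the paper carries out in full, but your outline of how each orbit condition forces a third missing state is consistent with those computations.
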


\begin{proof}
First observe that from the previous corollary a proper $3$-compressible $(\bf{3},\bf{4})$-automaton always satisfies the antecedent of one of the above conditions, so all the possible cases are taken into account.

\noindent We start proving that a $(\bf{3},\bf{4})$-automaton that does not satisfy the conditions $1.-3.$ is not $3$-compressible.
\begin{enumerate}
  \item Let condition $1.$ be false, \textit{i.e.}, $b=[x,y]\backslash 1$ and $Orb_a(x)\subseteq\{x,y\}$.
      The 3MSA in Fig. \ref{fig:prop:3-4:d:1} proves that $\A$ is not $3$-compressible.
      \begin{figure}[ht]%
        \centering
        {\footnotesize
        \begin{tikzpicture}[shorten >=1pt,node distance=5em,auto]

            \node[state] (q_0) {};
            \node[state] (q_1) [right of= q_0] {$1$};
            \node[state] (q_2) [right of= q_1] {$1,x$};
            \node[state] (q) [right of= q_2,draw=none] {};
            \node[state] (q_3) [right of= q] {$1,x$};
            \node[state] (q_4) [right of= q_3] {$1,y$};
            \node[state] (q_5) [right of= q_4] {$1,x$};

            \path[->] (q_0) edge node {$a,b$} (q_1)
                      (q_1) edge [loop above] node {$a$} (q_1)
                      (q_1) edge node {$b$} (q_2)
                      (q_2) edge node {$a|xa=x$} (q_3)
                      (q_2) edge [bend right=20] node [swap] {$a|xa=y$} (q_4)
                      (q_2) edge [loop above] node {$b$} (q_2)
                      (q_3) edge [loop above] node {$a,b$} (q_3)
                      (q_4) edge [bend right=20] node [swap] {$a,b$} (q_5)
                      (q_5) edge [bend right=20] node [swap] {$a$} (q_4)
                      (q_5) edge [loop right] node {$b$} (q_5);

            \draw [->] (-.5,.5) -- (-.27,.27);

        \end{tikzpicture}}\\
        \caption[]{3MSA for the case in which condition 1. of Proposition \ref{proposition:3-4} is false.}%
        \label{fig:prop:3-4:d:1}%
      \end{figure}
  \item Let $z=2$.
      \begin{enumerate}
        \item Let condition $2.(a)$ be false, \textit{i.e.}, $b=[1,y]\backslash 2$, $Orb_a(2)\subseteq\{2,y\}$ and $2ab\in\{1,y\}$.
            Observe that if $2a\neq 2$, then $2a=y$ and $ya=2$.
            Since $y\neq 2$, then $yb=2ab\neq 1$, and so  $2ab=y$.
            The 3MSA in Fig. \ref{fig:prop:3-4:d:2:1} proves that $\A$ is not $3$-compressible.
        \item Let condition $2.(b)$ be false, \textit{i.e.}, $b=[x,1]\backslash 2$, $\{2a,xa\}=\{2,x\}$ and $\{2ab,xab\}=\{1,x\}$.
            Observe that if $2a=2$, then $2ab=2b=x$ and so $xab=1$.
            If $xa=2$, then $xab=2b=x$ and so $2ab=1$.
            The 3MSA in Fig. \ref{fig:prop:3-4:d:2:2} proves that $\A$ is not $3$-compressible.
        \item Let condition $2.(c)$ be false, \textit{i.e.}, $b=[x,y]\backslash 2$, $1b=y$, $1\not\in\{x,y\}$ and $\{xa,ya\}= \{x,y\}$.
            The 3MSA in Fig. \ref{fig:prop:3-4:d:2:3} proves that $\A$ is not $3$-compressible.
            \begin{figure}[ht]%
                \centering
                \subfigure[][3MSA for the case $b={[1,y]}\backslash 2$, $2b=1$, $Orb_a(2)\subseteq\{2,y\}$ and $2ab\in\{1,y\}$.]{%
                \label{fig:prop:3-4:d:2:1}%
                {\footnotesize
                \begin{tikzpicture}[shorten >=1pt,node distance=5em,auto]

                    \node[state] (q_0) {};
                    \node[state] (q_2) [right of= q_0] {$2$};
                    \node[state] (q_1) [right of= q_2] {$1$};
                    \node[state] (q_3) [below of= q_0] {$1,2$};
                    \node[state] (q_9) [right of= q_3] {$2,y$};
                    \node[state] (q_7) [right of= q_9] {$1,y$};
                    \node[state] (q_8) [right of= q_7] {$1,2$};
                    \node[state] (q_4) [below of= q_3] {$1,2$};

                    \path[->] (q_0) edge [bend left=40] node {$a$} (q_1)
                              (q_0) edge node {$b$} (q_2)
                              (q_1) edge [loop above] node {$a$} (q_1)
                              (q_1) edge [bend left=20] node {$b$} (q_2)
                              (q_2) edge [bend left=20] node {$a$} (q_1)
                              (q_2) edge node [swap] {$b$} (q_3)
                              (q_3) edge [loop left] node {$b$} (q_3)
                              (q_3) edge node [swap] {$a|2a=2$} (q_4)
                              (q_3) edge [bend left=30] node {$a|2a=y$} (q_7)
                              (q_4) edge [loop below] node {$a,b$} (q_4)
                              (q_7) edge [bend left=10] node {$a$} (q_8)
                              (q_7) edge [bend left=10] node {$b$} (q_9)
                              (q_8) edge [bend left=10] node {$a$} (q_7)
                              (q_8) edge [loop above] node {$b$} (q_8)
                              (q_9) edge [bend right=30] node [swap] {$a,b$} (q_8);

                    \draw [->] (-.5,.5) -- (-.27,.27);

                \end{tikzpicture}}%
                }
                \hspace{20pt}%
                \subfigure[][3MSA for the case $b={[x,1]}\backslash 2$, $2b=x$ $\{2a,xa\}=\{2,x\}$ and $\{2ab,xab\}=\{1,x\}$.]{%
                \label{fig:prop:3-4:d:2:2}%
                {\footnotesize
                \begin{tikzpicture}[shorten >=1pt,node distance=5em,auto]

                    \node[state] (q_0) {};
                    \node[state] (q_2) [right of= q_0] {$2$};
                    \node[state] (q_1) [right of= q_2] {$1$};
                    \node[state] (q_3) [below of= q_0] {$2,x$};
                    \node[state] (q_9) [right of= q_3] {$2,x$};
                    \node[state] (q_7) [right of= q_9] {$1,2$};
                    \node[state] (q_8) [right of= q_7] {$1,x$};
                    \node[state] (q_4) [below of= q_3] {$1,x$};
                    \node[state] (q_5) [right of= q_4] {$1,2$};
                    \node[state] (q_6) [right of= q_5] {$2,x$};

                    \path[->] (q_0) edge [bend left=40] node {$a$} (q_1)
                              (q_0) edge node {$b$} (q_2)
                              (q_1) edge [loop above] node {$a$} (q_1)
                              (q_1) edge [bend left=20] node {$b$} (q_2)
                              (q_2) edge [bend left=20] node {$a$} (q_1)
                              (q_2) edge node [swap] {$b$} (q_3)
                              (q_3) edge [loop left] node {$b$} (q_3)
                              (q_3) edge node [swap] {$a|2a=2$} (q_4)
                              (q_3) edge [bend left=30] node {$a|2a=x$} (q_7)
                              (q_4) edge [loop below] node {$a$} (q_4)
                              (q_4) edge node {$b$} (q_5)
                              (q_5) edge [loop below] node {$a$} (q_5)
                              (q_5) edge node {$b$} (q_6)
                              (q_6) edge [loop below] node {$b$} (q_6)
                              (q_6) edge [bend right=30] node [swap] {$a$} (q_4)
                              (q_7) edge [bend left=10] node {$a$} (q_8)
                              (q_7) edge [bend left=10] node {$b$} (q_9)
                              (q_8) edge [bend left=10] node {$a,b$} (q_7)
                              (q_9) edge [bend left=10] node {$a$} (q_7)
                              (q_9) edge [loop left] node {$b$} (q_9);

                    \draw [->] (-.5,.5) -- (-.27,.27);

                \end{tikzpicture}}}
                \subfigure[][3MSA for the case $b={[x,y]}\backslash 2$, $2y=x$, $1b=y$, $1\not\in\{x,y\}$ and $\{xa,ya\}=\{x,y\}$.]{%
                \label{fig:prop:3-4:d:2:3}%
                {\footnotesize
                \begin{tikzpicture}[shorten >=1pt,node distance=5em,auto]

                    \node[state] (q_0) {};
                    \node[state] (q_7) [right of= q_0,draw=none] {};
                    \node[state] (q_1) [above of= q_7] {$1$};
                    \node[state] (q_2) [right of= q_1] {$2,y$};
                    \node[state] (q_3) [right of= q_2,draw=none] {};
                    \node[state] (q_4) [right of= q_3,draw=none] {};
                    \node[state] (q_5) [right of= q_4,draw=none] {};
                    \node[state] (q_6) [right of= q_5] {$1,x$};
                    \node[state] (q_8) [right of= q_7] {$1,y$};
                    \node[state] (q_9) [right of= q_8] {$2,y$};
                    \node[state] (q_10) [right of= q_9] {$1,x$};
                    \node[state] (q_11) [right of= q_10] {$2,x$};
                    \node[state] (q_12) [right of= q_6] {$2,y$};
                    \node[state] (q_14) [below of= q_7] {$2$};
                    \node[state] (q_15) [right of= q_14] {$2,x$};
                    \node[state] (q_16) [right of= q_15,draw=none] {};
                    \node[state] (q_17) [right of= q_16,draw=none] {};
                    \node[state] (q_18) [right of= q_17,draw=none] {};
                    \node[state] (q_19) [right of= q_18] {$1,y$};
                    \node[state] (q_13) [right of= q_19] {$2,x$};

                    \path[->] (q_0) edge node {$a$} (q_1)
                              (q_0) edge node {$b$} (q_14)
                              (q_1) edge [loop above] node {$a$} (q_1)
                              (q_1) edge node {$b$} (q_2)
                              (q_2) edge node {$a|xa=x$} (q_8)
                              (q_2) edge node {$a|xa=y$} (q_6)
                              (q_2) edge [bend right=30] node [swap] {$b$} (q_15)
                              (q_6) edge [bend right=10] node [swap] {$a$} (q_19)
                              (q_6) edge [bend right=20] node [swap] {$b$} (q_12)
                              (q_8) edge [loop below] node {$a$} (q_8)
                              (q_8) edge [bend right=20] node [swap] {$b$} (q_9)
                              (q_9) edge [bend right=20] node [swap] {$a$} (q_8)
                              (q_9) edge [bend left=30] node {$b$} (q_11)
                              (q_10) edge node {$b$} (q_9)
                              (q_10) edge [loop below] node {$a$} (q_10)
                              (q_11) edge node {$a$} (q_10)
                              (q_11) edge [loop below] node {$b$} (q_11)
                              (q_12) edge node {$b$} (q_13)
                              (q_12) edge [bend right=20] node [swap] {$a$} (q_6)
                              (q_13) edge node {$a$} (q_19)
                              (q_13) edge [loop below] node {$b$} (q_13)
                              (q_14) edge node {$a$} (q_1)
                              (q_14) edge node {$b$} (q_15)
                              (q_15) edge node [swap] {$a|xa=y$} (q_19)
                              (q_15) edge node [swap] {$a|xa=x$} (q_10)
                          (q_15) edge [loop below] node {$b$} (q_15)
                          (q_19) edge node [swap] {$b$} (q_12)
                          (q_19) edge [bend right=10] node [swap] {$a$} (q_6);

                \draw [->] (-.5,.5) -- (-.27,.27);

                \end{tikzpicture}}}\\
                \caption[]{3MSA for the case in which condition 2. of Proposition \ref{proposition:3-4} is false.}%
                \label{fig:prop:3-4:d:2}%
            \end{figure}
            \FloatBarrier
    \end{enumerate}
  \item Let $z\not\in\{1,2\}$.
    \begin{enumerate}
      \item Let condition $3.(a)$ be false, \textit{i.e.}, $b=[1,y]\backslash z$, $y\neq 2$ but $za=z$.
          Then for all $w\in a^*b$ and $u\in\{a,b\}^+$, $\M(w)=\{z\}$ and $\M(wu)=\{1,z\}$, so $\A$ is not $3$-compressible.
      \item Let condition $3.(b)$ be false, \textit{i.e.}, $b=[x,y]\backslash z$ and $\{x,y\}=\{1,2\}$, but $|Orb_a(z)|\leq2$ and $zab\in\{1,2\}$.
          If $x=1$ and $y=2$, then the 3MSA in Fig. \ref{fig:prop:3-4:d:3:2} proves that $\A$ is not $3$-compressible. Else, if $x=2$ and $y=1$, then the 3MSA in Fig. \ref{fig:prop:3-4:d:3:3} proves that $\A$ is not $3$-compressible.
          \begin{figure}[ht]%
              \centering
              \subfigure[][3MSA for the case $b={[1,2]}\backslash z$, $|Orb_a(z)|\leq2$ and $zab\in\{1,2\}$.]{%
              \label{fig:prop:3-4:d:3:2}%
              {\footnotesize
              \begin{tikzpicture}[shorten >=1pt,node distance=5em,auto]

                  \node[state] (q_0) {};
                  \node[state] (q_1) [right of= q_0] {$z$};
                  \node[state] (q_2) [right of= q_1] {$1$};
                  \node[state] (q_3) [right of= q_2] {$1,z$};
                  \node[state] (qq) [draw=none,right of= q_3] {};
                  \node[state] (q_4) [below of= q_0] {$1,za$};
                  \node[state] (q_5) [right of=q_4,draw=none] {};
                  \node[state] (q_6) [right of= q_5] {$1,z$};
                  \node[state] (q_7) [right of= q_6] {$1,za$};
                  \node[state] (q_8) [below of= q_4] {$2,z$};
                  \node[state] (q_9) [right of= q_8] {$1,za$};
                  \node[state] (q_10) [right of= q_9] {$1,z$};

                  \path[->] (q_0) edge [bend left=40] node {$a$} (q_2)
                            (q_0) edge node {$b$} (q_1)
                            (q_1) edge [bend right=30] node [swap] {$a|za=2,b$} (q_3)
                            (q_1) edge node [swap] {$a|za\neq z$} (q_4)
                            (q_2) edge [loop above] node {$a$} (q_2)
                            (q_2) edge node [swap] {$b$} (q_1)
                            (q_3) edge [loop above] node {$a,b$} (q_3)
                            (q_4) edge node {$b|zab=1,a$} (q_6)
                            (q_4) edge node {$b|zab=2$} (q_8)
                            (q_6) edge [loop below] node {$b$} (q_6)
                            (q_6) edge [bend left=10] node {$a$} (q_7)
                            (q_7) edge [bend left=10] node {$a,b$} (q_6)
                            (q_8) edge [bend left=10] node {$a$} (q_9)
                            (q_8) edge [bend right=40] node [swap] {$b$} (q_10)
                            (q_9) edge [bend left=10] node {$b$} (q_8)
                            (q_9) edge [bend left=10] node {$a$} (q_10)
                            (q_10) edge [bend left=10] node {$a$} (q_9)
                            (q_10) edge [loop right] node {$b$} (q_10);

                  \draw [->] (-.5,.5) -- (-.27,.27);

              \end{tikzpicture}}%
              }
              \hspace{20pt}%
              \subfigure[][3MSA for the case $b={[2,1]}\backslash z$, $|Orb_a(z)|\leq 2$ and $zab\in\{1,2\}$.]{%
              \label{fig:prop:3-4:d:3:3}%
              {\footnotesize
              \begin{tikzpicture}[shorten >=1pt,node distance=5em,auto]

                  \node[state] (q_1) [draw=none] {};
                  \node[state] (q_0) [right of= q_1] {};
                  \node[state] (q_2) [right of= q_0] {$z$};
                  \node[state] (q_11) [right of=q_2] {$1$};
                  \node[state] (q_3) [below of= q_0] {$1,za$};
                  \node[state] (q_4) [right of= q_3,draw=none] {};
                  \node[state] (q_5) [right of= q_4] {$1,z$};
                  \node[state] (q_6) [left of= q_3] {$1,z$};
                  \node[state] (q_7) [below of= q_6] {$2,z$};
                  \node[state] (q_8) [below of= q_4] {$2,z$};
                  \node[state] (q_9) [right of= q_5] {$2,z$};

                  \path[->] (q_0) edge node {$b$} (q_2)
                            (q_0) edge [bend left=40] node {$a$} (q_11)
                            (q_11) edge [loop above] node {$a$} (q_11)
                            (q_11) edge node [swap] {$b$} (q_2)
                            (q_2) edge node {$b$} (q_8)
                            (q_2) edge node {$a|za=z$} (q_5)
                            (q_2) edge node [swap] {$a|za\neq z$} (q_3)
                        (q_3) edge [bend left=10] node {$a,b$} (q_6)
                        (q_5) edge [loop left] node {$a$} (q_5)
                    (q_5) edge [bend left=10] node {$b$} (q_9)
                    (q_6) edge [bend left=10] node {$a$} (q_3)
                    (q_6) edge node [swap] {$b$} (q_7)
                    (q_7) edge node {$a$} (q_3)
                    (q_7) edge [loop below] node {$b$} (q_7)
                    (q_8) edge [loop below] node {$b$} (q_8)
                    (q_8) edge node [swap] {$a|za=z$} (q_5)
                    (q_8) edge node {$a|za\neq z$} (q_3)
                    (q_9) edge [bend left=10] node {$a$} (q_5)
                    (q_9) edge [loop below] node {$b$} (q_9);

              \draw [->] (.9,.5) -- (1.13,.27);

              \end{tikzpicture}}}\\
              \caption[]{3MSA for the case in which condition 3.(b) of Proposition \ref{proposition:3-4} is false.}%
          \end{figure}
    \end{enumerate}
\end{enumerate}

\FloatBarrier

Conversely, we find for any automaton satisfying conditions $1.-3.$ a (short) $3$-compressing word.
\begin{enumerate}
  \item Let $b=[x,y]\backslash 1$ and $Orb_a(x)\not\subseteq \{x,y\}$, whence $xa\neq x$.
      Then either $xa$ or $xa^2$ are different from $y$, and so either the word $b^2ab$ or $b^2a^2b$ $3$-compresses $\A$.
  \item Let $z=2$, we consider various subcases.
      \begin{enumerate}
        \item Let $b=[1,y]\backslash 2$.
            Then $2a\neq 2$, otherwise $Orb_a(2)=\{2\}$ and $2ab=1$, against the hypothesis.
            Hence $2ab=y$ and $2a^2b\neq y$.
            The P3MSA in Fig. \ref{fig:prop:3-4:r:2:1} proves that either the word $b^2ab^2$ or $b^2a^2b^2$ $3$-compresses $\A$;
            \begin{figure}[ht]%
                \centering
                {\footnotesize
                \begin{tikzpicture}[shorten >=1pt,node distance=5em,auto]

                    \node[state] (q_0) {};
                    \node[state] (q_1) [right of= q_0] {$2$};
                    \node[state] (q_2) [right of= q_1] {$1,2$};
                    \node[state] (q_3) [right of= q_2] {$1,2a$};
                    \node[state] (q_4) [right of= q_3,draw=none] {};
                    \node[state] (q_5) [right of= q_4] {$2,2ab$};
                    \node[state,accepting] (q_8) [right of= q_5] {};
                    \node[state] (q_7) [right of= q_8] {$2,2a^2b$};
                    \node[state] (q_6) [right of= q_7] {$1,2a^2$};

                    \path[->] (q_0) edge node {$b$} (q_1)
                              (q_1) edge node {$b$} (q_2)
                              (q_2) edge node {$a$} (q_3)
                              (q_3) edge node {$b|2ab\neq y$} (q_5)
                              (q_5) edge node {$b$} (q_8)
                              (q_3) edge [bend right=25] node [swap] {$a|2ab=y$} (q_6)
                              (q_6) edge node [swap] {$b$} (q_7)
                              (q_7) edge node [swap] {$b$} (q_8);

                    \draw [->] (-.5,.5) -- (-.27,.27);

                \end{tikzpicture}}
                \caption[]{P3MSA for the case 2.(a) of Proposition \ref{proposition:3-4}.}%
                \label{fig:prop:3-4:r:2:1}%
            \end{figure}
        \item Let $b=[x,1]\backslash 2$:
            \begin{enumerate}
              \item if $\{2ab,xab\}\neq\{1,x\}$, then the P3MSA in Fig. \ref{fig:prop:3-4:r:2:2:1} proves that the word $b^2ab^2$ or $b^2a^2b^2$ or $b^2abab^2$ $3$-compresses $\A$.
                  Observe that if $xab=x$, then $xa=2$ and $2ab\not\in\{1,x\}$.
                  \begin{figure}[ht]%
                    \centering
                    {\footnotesize
                    \begin{tikzpicture}[shorten >=1pt,node distance=4.8em,auto]

                        \node[state] (q_0) {};
                        \node[state] (q_1) [right of= q_0] {$2$};
                        \node[state] (q_2) [right of= q_1] {$2,x$};
                        \node[state] (q_3) [right of= q_2] {$1,xa$};
                        \node[state] (q_4) [right of= q_3,draw=none] {};
                        \node[state] (q_5) [right of= q_4] {$1,2$};
                        \node[state] (q_6) [right of= q_5] {$1,2a$};
                        \node[state] (q_7) [right of= q_6] {$2,2ab$};
                        \node[state] (q_8) [below of= q_4] {$1,2a$};
                        \node[state] (q_9) [right of= q_8] {$2,2ab$};
                        \node[state] (q_10) [right of= q_9] {$2,xab$};
                        \node[state,accepting] (q_11) [right of= q_10] {};

                        \path[->] (q_0) edge node {$b$} (q_1)
                                  (q_1) edge node {$b$} (q_2)
                                  (q_2) edge node {$a$} (q_3)
                                  (q_3) edge node {$b|xab=1$} (q_5)
                                  (q_3) edge node {$b|xab\not\in\{1,x\}$} (q_10)
                                  (q_3) edge node [swap] {$a|xab=x$} (q_8)
                                  (q_5) edge node {$a$} (q_6)
                                  (q_6) edge node {$b$} (q_7)
                                  (q_7) edge node {$b$} (q_11)
                                  (q_10) edge node {$b$} (q_11)
                                  (q_8) edge node {$b$} (q_9)
                                  (q_9) edge [bend right=28] node [swap] {$b$} (q_11);

                        \draw [->] (-.5,.5) -- (-.27,.27);

                    \end{tikzpicture}}\\
                    \caption[]{P3MSA for the case $b={[x,1]}\backslash 2$ and $\{2ab,xab\}\neq\{1,x\}$.}%
                    \label{fig:prop:3-4:r:2:2:1}%
                \end{figure}
              \item if $2ab=x$, $xab=1$, and $\{2a,xa\}\neq\{2,x\}$, then $2ab=2b$, $2a=2$ and then $xa\not\in\{2,x\}$. So $\M(b^2a)=\{1,xa\}$, $\M(b^2a^2)=\{1,xa^2\}$ and $\M(b^2a^2b)=\{2,xa^2b\}$. If $xa^2b=x=2b$, then $xa^2=2a$, and so $xa=2$, against the hypothesis. Else, if $xa^2b=1=xab$, then $xa^2=xa$, and so $xa=x$, against the hypothesis. Then it follows that $|\M(b^2a^2b^2)|=3$ and the word $b^2a^2b^2$ $3$-compresses $\A$.
              \item if $2ab=1$, $xab=x$, and $\{2a,xa\}\neq\{2,x\}$, then $xab=2b$, $xa=2$ and then $2a\not\in\{2,x\}$.
                  Then the P3MSA in Fig. \ref{fig:prop:3-4:r:2:2} proves that any word belonging to the language $\mathcal{L}=b(a^+b)^*(ba)^+a(ba)^*ab(b+ab^2)$ (and in particular $b^2a^3b^2$) $3$-compresses $\A$.
                  Indeed, if $2a^2b=x=2b$, then $2a^2=2$, against the hypothesis, else, if $2a^2b=1=2ab$, then $2a^2=2a$, and so $2a=2$, against the hypothesis, and so $2a^2b\not\in\{1,x\}$. Moreover, if $2a^2bab=1=2ab$, then $2a^2ba=2a$, and so $2a^2b=2$, against the hypothesis else, if $2a^2bab=x=xab$, then $xa^2ba=xa$, and so $2a^2b=x$, against the hypothesis, and so $2a^2bab\not\in\{1,x\}$.
                  \begin{figure}[ht]%
                    \centering
                    {\footnotesize
                    \begin{tikzpicture}[shorten >=1pt,node distance=4.8em,auto]

                        \node[state] (q_0) {};
                        \node[state] (q_1) [right of= q_0] {$2$};
                        \node[state] (q_2) [right of= q_1] {$2,x$};
                        \node[state] (q_3) [right of= q_2] {$1,2$};
                        \node[state] (q_4) [right of= q_3] {$1,2a$};
                        \node[state] (q_5) [right of= q_4] {$1,2a^2$};
                        \node[state,inner sep=-4pt] (q_6) [right of= q_5] {{\scriptsize $\begin{array}{c}2, \\2,2a^2b\end{array}$}};
                        \node[state,accepting] (q_7) [below of= q_6] {};
                        \node[state,inner sep=-4pt] (q_8) [right of= q_6] {{\scriptsize $\begin{array}{c}1, \\2a^2ba\end{array}$}};
                        \node[state,inner sep=-4pt] (q_9) [below of= q_8] {{\scriptsize $\begin{array}{c}2, \\2a^2bab\end{array}$}};
                        \node[state] (q_10) [below of= q_1] {$1$};

                        \path[->] (q_0) edge node {$b$} (q_1)
                                  (q_1) edge node {$b$} (q_2)
                                  (q_2) edge [loop below] node {$b$} (q_2)
                                  (q_2) edge [bend left=10] node {$a$} (q_3)
                                  (q_3) edge [bend left=10] node {$b$} (q_2)
                                  (q_3) edge [bend left=10] node {$a$} (q_4)
                                  (q_4) edge [bend left=10] node {$b$} (q_3)
                                  (q_4) edge node {$a$} (q_5)
                                  (q_5) edge node {$b$} (q_6)
                                  (q_6) edge node {$b$} (q_7)
                                  (q_6) edge node {$a$} (q_8)
                                  (q_8) edge node {$b$} (q_9)
                                  (q_9) edge node [swap] {$b$} (q_7)
                                  (q_1) edge [bend left=10] node {$a$} (q_10)
                                  (q_10) edge [bend left=10] node {$b$} (q_1)
                                  (q_10) edge [loop left] node {$a$} (q_10);

                        \draw [->] (-.5,.5) -- (-.27,.27);

                    \end{tikzpicture}}\\
                    \caption[]{P3MSA for the case $b={[x,1]}\backslash 2$, $2ab=1$, $xab=x$ and $\{2a,xa\}\neq\{2,x\}$.}%
                    \label{fig:prop:3-4:r:2:2}%
                \end{figure}
            \end{enumerate}
        \item Let $b=[x,y]\backslash 2$, $1b=y$, $1\not\in\{x,y\}$ and $\{xa,ya\}\neq \{x,y\}$.
            Observe that if $xa\in\{x,y\}$, then $ya\not\in \{x,y\}$.
            The P3MSA in Fig. \ref{fig:prop:3-4:r:2:3} proves that in this case either the word $b^2ab$ or $b^2abab$ $3$-compresses $\A$.
            \begin{figure}[ht]%
            \centering
            {\footnotesize
            \begin{tikzpicture}[shorten >=1pt,node distance=5em,auto]

                \node[state] (q_0) {};
                \node[state] (q_1) [right of= q_0] {$2$};
                \node[state] (q_2) [right of= q_1] {$2,x$};
                \node[state] (q_3) [right of= q_2] {$1,xa$};
                \node[state] (q_4) [right of= q_3,draw=none] {};
                \node[state,accepting] (q_5) [right of= q_4] {};
                \node[state] (q_6) [right of= q_5] {$1,ya$};
                \node[state] (q_7) [right of= q_6] {$2,y$};

                \path[->] (q_0) edge node {$b$} (q_1)
                          (q_1) edge node {$b$} (q_2)
                          (q_2) edge node {$a$} (q_3)
                          (q_3) edge node {$b|xa\not\in\{x,y\}$} (q_5)
                          (q_3) edge [bend right=18] node [swap] {$b|xa\in\{x,y\}$} (q_7)
                          (q_7) edge node [swap] {$a$} (q_6)
                          (q_6) edge node [swap] {$b$} (q_5);

                \draw [->] (-.5,.5) -- (-.27,.27);

            \end{tikzpicture}}\\
            \caption[]{P3MSA for the case $b=[x,y]\backslash 2$, $1b=y$, $1\not\in\{x,y\}$ and $\{xa,ya\}\neq \{x,y\}$.}%
            \label{fig:prop:3-4:r:2:3}%
            \end{figure}
    \end{enumerate}
  \item Let $z\not\in\{1,2\}$, we consider two main subcases.
    \begin{enumerate}
      \item Let $b=[1,y]\backslash z$, $y\neq 2$ and $za\neq z$. Then $\M(ba)=\{1,za\}$, $\M(bab)=\{z,zab\}$.
          If $zab\neq y$, then $|\M(bab^2)|=3$, else if $zab=y$ then $|\M(baba)|=3$, hence either the word $bab^2$ or $baba$ $3$-compresses $\A$.
      \item Let $\{x,y\}=\{1,2\}$. We consider two subcases.
        \begin{enumerate}
          \item Let $b=[x,y]\backslash z$ and $|Orb_a(z)|>2$.
            In particular $za\neq z$, $zab\neq za^2b$ and $z\neq za^2$.
            If $zab=y$, then $za^2b\not\in\{zb,zab\}=\{1,2\}$. The P3MSA in Fig. \ref{fig:prop:3-4:r:3:2} proves that either the word $ba^2b^2$ or $bab^2$ $3$-compresses $\A$.
            \begin{figure}[ht]%
            \centering
            {\footnotesize
            \begin{tikzpicture}[shorten >=1pt,node distance=5em,auto]

                \node[state] (q_0) {};
                \node[state] (q_1) [right of= q_0] {$z$};
                \node[state] (q_2) [right of= q_1] {$1,za$};
                \node[state] (q_3) [right of= q_2,draw=none] {};
                \node[state] (q_4) [right of= q_3] {$z,zab$};
                \node[state,accepting] (q_7) [right of= q_4] {};
                \node[state] (q_6) [right of= q_7] {$z,za^2b$};
                \node[state] (q_5) [right of= q_6] {$1,za^2$};

                \path[->] (q_0) edge node {$b$} (q_1)
                          (q_1) edge node {$a$} (q_2)
                          (q_2) edge node {$b|zab\neq y$} (q_4)
                          (q_2) edge [bend right=22] node [swap] {$a|zab=y$} (q_5)
                          (q_5) edge node [swap] {$b$} (q_6)
                          (q_6) edge node [swap] {$b$} (q_7)
                          (q_4) edge node {$b$} (q_7);

                \draw [->] (-.5,.5) -- (-.27,.27);

            \end{tikzpicture}}\\
            \caption[]{P3MSA for the case $b={[x,y]}\backslash z$, $\{x,y\}=\{1,2\}$ and $|Orb_a(z)|>2$.}%
            \label{fig:prop:3-4:r:3:2}%
            \end{figure}
          \item Let $b=[x,y]\backslash z$ and $zab\not\in\{1,2\}$.
            Then $\M(bab)=\{z,zab\}$ and $|\M(bab^2)|=|\M(baba)|=3$, both the words $bab^2$ and $baba$ $3$-compress $\A$.
        \end{enumerate}
    \end{enumerate}
\end{enumerate}
\end{proof}

\FloatBarrier


\begin{lemma}\label{lemma:4-4}
Let $\A$ be a $(\bf{4},\bf{4})$ $3$-compressible automaton with $a=[1,2]\backslash 3$, $3a=1$ and $b=[x,y]\backslash z$, $zb=x$.
If $\A$ is proper then all the following conditions hold:
\begin{enumerate}
  \item $\{1,2\}\cap\{x,z\}\neq \emptyset$,
  \item $z\in\{1,2\}$ or $\{3,za\}\cap\{x,y\}\neq \emptyset$,
  \item $\{3,3b\}\cap\{x,y\}\neq \emptyset$,
  \item $\{1,3\}\cap\{x,y\}\neq \emptyset$,
  \item $3\in \{x,y\}$ or $\{z,3b\}\cap\{1,2\}\neq \emptyset$,
  \item $\{z,za\}\cap\{1,2\}\neq \emptyset$.
\end{enumerate}
\end{lemma}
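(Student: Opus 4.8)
The plan is to prove the contrapositive: I will show that if any one of the six conditions fails, then $\A$ is $3$-compressed by some word of length $3$, so that $\A$ is not proper. Hence a proper, $3$-compressible $(\mathbf 4,\mathbf 4)$-automaton must satisfy all six conditions simultaneously. This mirrors the proof of Lemma~\ref{lemma:3-4}; the only structural difference is that here $a=[1,2]\backslash 3$ with $3a=1$ is of type $\mathbf 4$, so the missing state $3$ of $a$ is distinct from the colliding pair $\{1,2\}$ and satisfies $3a=1$, whereas in Lemma~\ref{lemma:3-4} the letter $a$ was of type $\mathbf 3$.

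First I would set up the bookkeeping that turns every estimate into a one-line count. I use the type-$\mathbf 4$ convention that $1,2,3$ are pairwise distinct and $x,y,z$ are pairwise distinct, that $a$ is injective off $\{1,2\}$ and $b$ off $\{x,y\}$, and that $\M(a)=\{3\}$, $\M(b)=\{z\}$. The master fact is that applying $a$ (resp.\ $b$) to a set $S\subseteq Q$ loses exactly one element when both $1,2\in S$ (resp.\ both $x,y\in S$) and is otherwise injective on $S$; consequently a word of length $3$ reaches deficiency $3$ precisely when the colliding pair of its last letter is entirely contained in that letter's input set. I would then record the four relevant two-step missing sets, each obtained by a unique-preimage argument: since $1a=2a\neq 1$, the only preimage of $1$ under $a$ is $3$, giving $\M(a^2)=\{1,3\}$; since $xb=yb\neq x$, the only preimage of $x$ under $b$ is $z$, giving $\M(b^2)=\{x,z\}$; and, provided $z\notin\{1,2\}$ resp.\ $3\notin\{x,y\}$, one gets $\M(ba)=\{3,za\}$ resp.\ $\M(ab)=\{z,3b\}$. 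Each of these four sets has exactly two elements, because the image of a letter never contains its own missing state (so $za\neq 3$ and $3b\neq z$) and because $1\neq 3$, $x\neq z$.

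With this in hand, each condition attaches to one length-$3$ word: condition $1$ to $b^2a$, condition $2$ to $bab$, condition $3$ to $ab^2$, condition $4$ to $a^2b$, condition $5$ to $aba$, and condition $6$ to $ba^2$. In every case, negating the condition is exactly the statement that the colliding pair of the final letter avoids the two states already missing after the first two letters. For instance, if $\{1,3\}\cap\{x,y\}=\emptyset$ then $\{x,y\}\subseteq Q\setminus\M(a^2)=Q\setminus\{1,3\}$, a set of $|Q|-2$ elements whose image under $b$ has $|Q|-3$ elements, so $|\M(a^2b)|=3$; the five other cases run identically. Here I would also flag the preliminary hypotheses each case draws from the negation in order to legitimize its intermediate two-element set: conditions $2$ and $6$ use $z\notin\{1,2\}$ to validate $\M(ba)=\{3,za\}$, while conditions $3$ and $5$ use $3\notin\{x,y\}$ to validate $\M(ab)=\{z,3b\}$.

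The one delicate point — where I expect to spend the care — is the unique-preimage arithmetic guaranteeing that the intermediate missing sets have size exactly two, rather than collapsing or growing; this is precisely where deficiency being exactly $1$ together with the distinctness conventions ($v_a=1a\neq 1$, $\beta=xb\neq x$, $z\notin\{x,y\}$, $3\notin\{1,2\}$) is used. It is worth emphasizing that conditions $4$ and $6$ have no counterpart in Lemma~\ref{lemma:3-4}: they appear exactly because the extra arrow $3a=1$ forces $\M(a^2)=\{1,3\}$ (rather than the degenerate type-$\mathbf 3$ behaviour) and makes $za$, rather than the missing state itself, the state one must track. These two cases are therefore the only places where the type-$\mathbf 4$ shape of $a$ genuinely enters, and I would present them last for emphasis.
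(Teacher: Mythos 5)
Your proof is correct and takes essentially the same approach as the paper: for each condition you show that its negation forces a length-$3$ word of deficiency $3$, using the same intermediate two-element missing sets $\M(b^2)=\{x,z\}$, $\M(a^2)=\{1,3\}$, $\M(ba)=\{3,za\}$ and $\M(ab)=\{z,3b\}$ justified by the same unique-preimage observations. In fact your word assignments for conditions $5$ and $6$ (namely $aba$ and $ba^2$) correct what appear to be typographical slips in the paper's proof, which writes $|\M(a^2b)|=3$ and $|\M(bab)|=3$ in those two cases.
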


\begin{proof}
If $\{1,2\}\cap\{x,z\}=\emptyset$, then $\M(b^2)=\{x,z\}$ and $|\M(b^2a)|=3$;
if $z\not\in\{1,2\}$ and $\{3,za\}\cap\{x,y\}=\emptyset$, then $\M(ba)=\{3,za\}$ and $|\M(bab)|=3$;
if $\{3,3b\}\cap\{x,y\}=\emptyset$, then $\M(ab)=\{3,zb\}$ and $|\M(ab^2)|=3$;
if $\{1,3\}\cap\{x,y\}=\emptyset$, then $\M(a^2)=\{1,3\}$ and $|\M(a^2b)|=3$;
if $3\not\in \{x,y\}$ and $\{z,3b\}\cap\{1,2\}=\emptyset$, then $\M(ab)=\{z,3b\}$ and $|\M(a^2b)|=3$;
if $\{z,za\}\cap\{1,2\}=\emptyset$, then $\M(ba)=\{z,3b\}$ and $|\M(bab)|=3$.
Then each automaton that does not satisfies one of the above is not proper.
\end{proof}

\begin{corollary}\label{corollary:4-4}
Let $\A$ be a \emph{proper} $(\bf{4},\bf{4})$ $3$-compressible automaton with $a=[1,2]\backslash 3,\ 3a=1$ and $b=[x,y]\backslash z,\ zb=x$.
The following conditions hold:
\begin{enumerate}
  \item if $z=1$, then $3\in \{x,y\}$,
  \item if $z=2$, then $3\in \{x,y\}$ or it is $1\in \{x,y\}$ and $3b\in\{x,y\}$,
  \item if $z\not\in\{1,2\}$, then $x\in \{1,2\}$ and $za\in\{1,2\}$.
\end{enumerate}
\end{corollary}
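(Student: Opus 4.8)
The plan is to read the conclusion off the six membership conditions supplied by Lemma~\ref{lemma:4-4}, using only one extra elementary fact: in a type $\mathbf{4}$ letter $b=[x,y]\backslash z$ the three states $x,y,z$ are pairwise distinct (by the standing convention that distinct letters denote distinct states), so in particular $z\notin\{x,y\}$. Each case of the corollary is then obtained by feeding the value of $z$ into the appropriate lemma condition and discarding the disjunct that $z$ can no longer satisfy.

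First I would treat $z=1$. Since $z=1\notin\{x,y\}$, the membership $\{1,3\}\cap\{x,y\}\neq\emptyset$ of condition~4 of Lemma~\ref{lemma:4-4} cannot be realised through the element $1$, so it forces $3\in\{x,y\}$, which is exactly the asserted conclusion. Next, for $z=2$ I again note $2\notin\{x,y\}$ and apply condition~4: either $3\in\{x,y\}$, in which case the first disjunct of the claim holds, or $3\notin\{x,y\}$ and then condition~4 forces $1\in\{x,y\}$. In this second subcase I invoke condition~3, namely $\{3,3b\}\cap\{x,y\}\neq\emptyset$; since $3\notin\{x,y\}$ this forces $3b\in\{x,y\}$, yielding the second disjunct ``$1\in\{x,y\}$ and $3b\in\{x,y\}$''.

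Finally, for $z\notin\{1,2\}$ the conclusion is the conjunction of two independent assertions, each handled by a single lemma condition. Condition~1, $\{1,2\}\cap\{x,z\}\neq\emptyset$, cannot be met by $z$ (as $z\notin\{1,2\}$), hence it delivers $x\in\{1,2\}$; and condition~6, $\{z,za\}\cap\{1,2\}\neq\emptyset$, likewise cannot be met by $z$ and therefore delivers $za\in\{1,2\}$. I expect no genuine obstacle here, since the statement is a pure case analysis that merely repackages Lemma~\ref{lemma:4-4}; the only point requiring care is bookkeeping — in each case one must identify precisely which membership the relevant lemma condition is forced to produce once the element equal to $z$ has been excluded, and conditions~2 and~5 are simply not needed for this weaker corollary.
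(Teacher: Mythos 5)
Your proposal is correct and follows exactly the route the paper intends: the paper's proof is just the one-line remark that the corollary is a straightforward consequence of Lemma \ref{lemma:4-4}, and you have filled in the bookkeeping (using conditions 1, 3, 4 and 6 together with the fact that $z\notin\{x,y\}$ for a type $\mathbf{4}$ letter) in the obvious way. No discrepancy to report.
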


\begin{proof}
It is a straightforward consequence of the previous lemma.
Observe that not all the conditions of Lemma \ref{lemma:4-4} are applied, so some automaton satisfying the conditions of the corollary could possibly be not proper or not $3$-compressible.
\end{proof}

\begin{proposition}\label{proposition:4-4}
Let $\A$ be a $(\bf{4},\bf{4})$-automaton with $a=[1,2]\backslash 3,\ 3a=1$ and $b=[x,y]\backslash z,\ zb=x$.
Then $\A$ is proper and $3$-compressible if, and only if, the following conditions hold:
\begin{enumerate}
  \item if $z=1$, then $y=3$ and it is $xa\neq 2$ or $2b \neq 3$,
  \item if $z=2$ and $3\not\in\{x,y\}$, then $1\in\{x,y\}$ and $3b \in\{x,y\}$,
  \item if $z=2$ and $3\in\{x,y\}$, if $q\in\{x,y\}\setminus\{3\}$ then $qa\neq 2$ or $1b \neq y$,
  \item if $b=[x,y]\backslash z$, $z\not\in\{1,2\}$, $x\in\{1,2\}$ and $za\in\{1,2\}$, then $y=3$.
\end{enumerate}
Moreover, if $\mathcal{A}$ is proper and $3$-compressible, then either the word $b^2a^2$ or $b^2ab^2$ or $a^2b^2$ or $a^2ba^2$ $3$-compresses $\A$.
\end{proposition}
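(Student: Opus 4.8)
The plan is to follow the same two-direction scheme used for Propositions \ref{proposition:3-4} and \ref{proposition:4-p}, organised by the position of the missing state $z$ of $b$, so that conditions $1.$--$4.$ are exactly the cases $z=1$, $z=2$ (split by whether $3\in\{x,y\}$), and $z\notin\{1,2\}$. First I would record the elementary missing-set identities for the pinned letter $a=[1,2]\backslash 3$ with $3a=1$: since $3$ is the only non-image and $a^{-1}(1)=\{3\}$, one has $\mathcal{M}(a)=\{3\}$ and $\mathcal{M}(a^2)=\{1,3\}$, and dually $\mathcal{M}(b)=\{z\}$, $\mathcal{M}(b^2)=\{z,x\}$. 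I would also note two constraints forced by the transformation types: because $b=[x,y]\backslash z$ merges \emph{only} $x$ and $y$ and $zb=x$, the state $x$ has $b^{-1}(x)=\{z\}$, so in particular $2b\neq x$ whenever $1b=x$; symmetrically $c:=1a=2a$ has $a^{-1}(c)=\{1,2\}$. These are what make every candidate missing set computable in one line.

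For the \emph{only if} direction I would argue by contraposition, using Corollary \ref{corollary:4-4} to guarantee that a proper $3$-compressible automaton already satisfies the antecedent of one of the four conditions; it then suffices to show that an automaton meeting an antecedent but violating the stated consequent is not $3$-compressible. For each violating configuration -- for example $z=1$ with $xa=2$ and $2b=3$ for condition $1.$, or $z=2$, $3\in\{x,y\}$ with $qa=2$ and $1b=y$ for condition $3.$ -- I would draw a (conditional) $3$MSA and verify that it contains no three-token state, i.e. that the reachable missing sets are trapped in a finite cycle of sets of size at most $2$. The branching on the still-free images $xa$, $2b$, $1b$, $za$ is absorbed by conditional labels exactly as in Figures \ref{fig:prop:3-4:d:2} and \ref{fig:prop:4-p:d:4-1-2-3}.

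For the \emph{if} direction I would, in each surviving case, exhibit one of the four words $b^2a^2$, $b^2ab^2$, $a^2b^2$, $a^2ba^2$ by propagating the missing set. In condition $1.$ ($z=1$, $y=3$) one has $\mathcal{M}(b^2)=\{1,x\}$, hence $\mathcal{M}(b^2a)=\{3,xa\}$ when $xa\notin\{1,2\}$ (and $xa\neq1$ is automatic since $a^{-1}(1)=\{3\}$), and then $\mathcal{M}(b^2a^2)=\{1,3,xa^2\}$ with $xa^2\notin\{1,3\}$, so $b^2a^2$ already $3$-compresses; the complementary subcase $xa=2$ is handled by $b^2ab^2$, whose terminal set is $\{1,x,2b^2\}$, genuinely of size $3$ precisely because $2b\notin\{x,3\}$ (here $2b\neq x$ is the type constraint noted above and $2b\neq 3$ is the surviving hypothesis). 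The two remaining words $a^2b^2$ and $a^2ba^2$ come for free from the $a\leftrightarrow b$ duality relabelling $\{1,2\}\leftrightarrow\{x,y\}$ and $3\leftrightarrow z$, which carries the pair $(b^2a^2,b^2ab^2)$ to $(a^2b^2,a^2ba^2)$; this symmetry lets me treat essentially the $z\in\{1,2\}$ cases and transport the $z\notin\{1,2\}$ analysis. Properness is not automatic, so I would finish by checking that the conditions of Proposition \ref{proposition:4-4} entail all six conditions of Lemma \ref{lemma:4-4}, whence no word of length $3$ attains deficiency $3$.

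The main obstacle is the \emph{only if} direction. Unlike the permutation cases, here both letters simultaneously create a missing state and merge a pair, so a missing set can shrink as well as grow; proving non-$3$-compressibility therefore requires exhibiting the full finite orbit of the missing-set dynamics rather than a single short computation, and the number of branches is largest for condition $3.$ ($z=2$ with $3\in\{x,y\}$), where the interaction of the two merges produces the densest conditional $3$MSA. Making the conditional labels there mutually exclusive and exhaustive, and verifying that no branch ever accumulates a third token, is the delicate bookkeeping step.
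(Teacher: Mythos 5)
Your overall architecture (Corollary \ref{corollary:4-4} to bound the antecedents, conditional 3MSAs for necessity, direct missing-set propagation for sufficiency) is the paper's, your preliminary identities $\M(a^2)=\{1,3\}$, $\M(b^2)=\{z,x\}$, $b^{-1}(x)=\{z\}$ are correct, and your condition-$1$ computation reproduces the paper's Fig.~\ref{fig:prop:4-4:r:1} exactly. The $a\leftrightarrow b$ duality is a legitimate economy the paper uses in other propositions (though here you would still have to check which configuration dualises to which: condition $4.$ dualises into condition $2.$, and condition $3.$ splits between condition $1.$ and itself, so the bookkeeping is not free).

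The genuine gap is in your \emph{only if} direction: you assert that it ``suffices to show that an automaton meeting an antecedent but violating the stated consequent is not $3$-compressible'' and that you would verify this by checking the $3$MSA ``contains no three-token state.'' That is false for a substantial part of the excluded configurations, which are $3$-compressible but merely improper. Concretely, if $z=2$, $3\notin\{x,y\}$ and $1\notin\{x,y\}$ (a violation of condition $2.$), then $\{x,y\}\cap\{1,2,3\}=\emptyset$, so $\M(a^2b)=\{2,1b,3b\}$ has cardinality $3$: the automaton is $3$-compressed by the length-$3$ word $a^2b$, and your proposed verification that the missing sets stay trapped at size $\le 2$ simply cannot succeed. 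The same happens for $z=1$ with $3\notin\{x,y\}$ inside condition $1.$, and for the subcases of condition $4.$ that the paper prunes via conditions $2.$--$4.$ of Lemma \ref{lemma:4-4}. What must be proved is ``not proper \emph{or} not $3$-compressible,'' and the paper's proof uses both horns: it first discards, by Lemma \ref{lemma:4-4}/Corollary \ref{corollary:4-4}, every violating configuration that is already compressed by a word of length $3$ (hence improper), and only runs the non-reachability argument on the $3$MSAs of what survives. Without that preliminary pruning your contrapositive stalls on exactly those cases.
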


\begin{proof}
First observe that from Corollary \ref{corollary:4-4}, a proper $3$-compressible $(\bf{4},\bf{4})$-automaton always satisfies the antecedent of one of the above conditions, so all the possible cases are taken into account.

\noindent We start proving that a $(\bf{4},\bf{4})$-automaton that does not satisfy conditions 1.-4. is not proper or it is not $3$-compressible.
\begin{enumerate}
  \item Let condition 1. be false, \textit{i.e.}, $b=[x,y]\backslash 1$ but either $y\neq 3$ or $xa=2$ and $2b=3$.
      Observe that if $y\neq 3$ then from the previous corollary we have $x=3$.
      The 3MSA in Fig. \ref{fig:prop:4-4:d:1} proves that if condition 1. is false, then $\A$ is not $3$-compressible.
      \begin{figure}[ht]%
        \centering
        \subfigure[][Case $y\neq 3$ and $x=3$.]{%
        \label{fig:prop:4-4:d:1:1}%
        {\footnotesize
        \begin{tikzpicture}[shorten >=1pt,node distance=5em,auto]

            \node[state] (q_0) {};
            \node[state] (q_1) [right of= q_0] {$3$};
            \node[state] (q_2) [right of= q_1] {$1,3$};
            \node[state] (q_3) [below of= q_1] {$1$};

            \path[->] (q_0) edge node {$a$} (q_1)
                      (q_0) edge node [swap] {$b$} (q_3)
                      (q_1) edge [bend left=10] node {$b$} (q_3)
                      (q_1) edge node {$a$} (q_2)
                      (q_2) edge [loop right] node {$a,b$} (q_2)
                      (q_3) edge [bend left=10] node {$a$} (q_1)
                      (q_3) edge [draw=none,loop below] node {\phantom{b}} (q_3)
                      (q_3) edge node [swap] {$b$} (q_2);

            \draw [->] (-.5,.5) -- (-.27,.27);

        \end{tikzpicture}}%
        }
        \hspace{20pt}%
        \subfigure[][Case $xa=2$ and $2b=3$.]{%
        \label{fig:prop:4-4:d:1:2}%
        {\footnotesize
        \begin{tikzpicture}[shorten >=1pt,node distance=5em,auto]

            \node[state] (q_0) {};
            \node[state] (q_1) [right of= q_0] {$3$};
            \node[state] (q_2) [right of= q_1] {$1,3$};
            \node[state] (q_3) [below of= q_0] {$1$};
            \node[state] (q_4) [right of= q_3] {$x,1$};
            \node[state] (q_5) [right of= q_4] {$2,3$};

            \path[->] (q_0) edge node {$a$} (q_1)
                      (q_0) edge node [swap] {$b$} (q_3)
                      (q_1) edge [bend left=10] node {$b$} (q_3)
                      (q_1) edge node {$a$} (q_2)
                      (q_2) edge [loop right] node {$a$} (q_2)
                      (q_2) edge node {$b$} (q_4)
                      (q_3) edge [bend left=10] node {$a$} (q_1)
                      (q_3) edge node [swap] {$b$} (q_4)
                      (q_4) edge [loop below] node {$b$} (q_4)
                      (q_4) edge node [swap] {$a$} (q_5)
                      (q_5) edge node [swap] {$a,b$} (q_2);

            \draw [->] (-.5,.5) -- (-.27,.27);

        \end{tikzpicture}}}\\
        \caption[]{3MSA for the case in which condition 1. of Proposition \ref{proposition:4-4} is false.}%
        \label{fig:prop:4-4:d:1}%
      \end{figure}
  \item Let condition 2. be false, \textit{i.e.}, $b=[x,y]\backslash 2$ and $3\not\in\{x,y\}$ but either $1\not\in\{x,y\}$ or $3b\not\in\{x,y\}$.
      From conditions 3. and 4. of Lemma \ref{lemma:4-4} we have that in this cases $\A$ is not proper, as it is $3$-compressed either by $ab^2$ or by $a^2b$.
  \item Let condition 3. be false. If $b=[x,3]\backslash 2$ but $xa=2$ and $1b=3$, then the 3MSA in Fig. \ref{fig:prop:4-4:d:2:1} proves that  $\A$ is not $3$-compressible.
      Else, if $b=[3,y]\backslash 2$ but $ya=2$ and $1b=y$, then the 3MSA in Fig. \ref{fig:prop:4-4:d:2:2} proves $\A$ is not $3$-compressible.
      \begin{figure}[ht]%
        \centering
        \subfigure[][$b={[x,3]}\backslash 2$, $xa=2$ and $1b=3$.]{%
        \label{fig:prop:4-4:d:2:1}%
        {\footnotesize
        \begin{tikzpicture}[shorten >=1pt,node distance=5em,auto]

            \node[state] (q_0) {};
            \node[state] (q_1) [right of= q_0] {$3$};
            \node[state] (q_2) [right of= q_1] {$1,3$};
            \node[state] (q_3) [below of= q_0] {$2$};
            \node[state] (q_4) [right of= q_3] {$x,2$};
            \node[state] (q_5) [right of= q_4] {$2,3$};

            \path[->] (q_0) edge node {$a$} (q_1)
                      (q_0) edge node [swap] {$b$} (q_3)
                      (q_1) edge [bend left=10] node {$b$} (q_3)
                      (q_1) edge node {$a$} (q_2)
                      (q_2) edge [loop right] node {$a\phantom{,b}$} (q_2)
                      (q_2) edge [bend left=10]node {$b$} (q_5)
                      (q_3) edge [bend left=10] node {$a$} (q_1)
                      (q_3) edge node [swap] {$b$} (q_4)
                      (q_4) edge [loop below] node {$b$} (q_4)
                      (q_4) edge [bend left=10] node {$a$} (q_5)
                      (q_5) edge [bend left=10] node {$b$} (q_4)
                      (q_5) edge [bend left=10] node {$a$} (q_2);

            \draw [->] (-.5,.5) -- (-.27,.27);

        \end{tikzpicture}}%
        }
        \hspace{20pt}%
        \subfigure[][$b={[3,y]}\backslash 2$, $ya=2$ and $1b=y$.]{%
        \label{fig:prop:4-4:d:2:2}%
        {\footnotesize
        \begin{tikzpicture}[shorten >=1pt,node distance=5em,auto]

            \node[state] (q_0) {};
            \node[state] (q_1) [right of= q_0] {$3$};
            \node[state] (q_2) [right of= q_1] {$1,3$};
            \node[state] (q_3) [below of= q_0] {$2$};
            \node[state] (q_4) [right of= q_3] {$2,3$};
            \node[state] (q_5) [right of= q_4] {$2,y$};

            \path[->] (q_0) edge node {$a$} (q_1)
                      (q_0) edge node [swap] {$b$} (q_3)
                      (q_1) edge [bend left=10] node {$b$} (q_3)
                      (q_1) edge node {$a$} (q_2)
                      (q_2) edge [loop right] node {$a$} (q_2)
                      (q_2) edge node {$b$} (q_5)
                      (q_3) edge [bend left=10] node {$a$} (q_1)
                      (q_3) edge node [swap] {$b$} (q_4)
                      (q_4) edge [loop below] node {$b$} (q_4)
                      (q_4) edge node [swap] {$a$} (q_2)
                      (q_5) edge node [swap] {$a,b$} (q_4);

            \draw [->] (-.5,.5) -- (-.27,.27);

        \end{tikzpicture}}}\\
        \caption[]{3MSA for the case in which condition 3. of Proposition \ref{proposition:4-4} is false.}%
        \label{fig:prop:4-4:d:2}%
      \end{figure}
  \item Let condition 4. be false, \textit{i.e.}, $b=[x,y]\backslash z$, $z\not\in\{1,2\}$, $x\in\{1,2\}$ and $za\in\{1,2\}$, but $y\neq 3$.
      We have to consider the following subcases.
      \begin{enumerate}
        \item Let $za=1$, hence $z=3$ and $zb=x$.
            If $x=1$, then for all $w\in\{a,b\}^+$ with $|w|\geq 2$, we have $\M(w)=\{1,3\}$, and then $\A$ is not $3$-compressible.
            If $x=2$, then by condition 4 of Lemma \ref{lemma:4-4} we have $y=1$, and for all $w\in\{a,b\}^+$ we have $M(wa)=\{1,3\}$ and $\M(wb)=\{2,3\}$ and again $\A$ is not $3$-compressible.
        \item Let $za=2$, hence $z\neq 3$.
            We consider two further subcases:
            \begin{enumerate}
              \item if $x=1$, then $3b\neq 1$ and by condition 2. of Lemma \ref{lemma:4-4} $za=y=2$, and by condition 3. of Lemma \ref{lemma:4-4} we have $3b=2$.
                  The 3MSA in Fig. \ref{fig:prop:4-4:d:5:1} proves that $\A$ is not $3$-compressible;
              \item if $x=2$, then by condition 4. of Lemma \ref{lemma:4-4} we have $y=1$ and by condition 3. of Lemma \ref{lemma:4-4} we have $3b=1$.
                  The 3MSA in Fig. \ref{fig:prop:4-4:d:5:2} proves that $\A$ is not $3$-compressible.
                  \begin{figure}[ht]%
                    \centering
                    \subfigure[][Case $x=1$.]{%
                    \label{fig:prop:4-4:d:5:1}%
                    {\footnotesize
                    \begin{tikzpicture}[shorten >=1pt,node distance=5em,auto]

                        \node[state] (q_0) {};
                        \node[state] (q_1) [right of= q_0] {$3$};
                        \node[state] (q_2) [right of= q_1] {$1,3$};
                        \node[state] (q_3) [right of= q_2] {$2,z$};
                        \node[state] (q_4) [below of= q_1] {$z$};
                        \node[state] (q_5) [right of= q_4] {$2,3$};
                        \node[state] (q_6) [right of= q_5] {$1,z$};

                        \path[->] (q_0) edge node {$a$} (q_1)
                                  (q_0) edge node {$b$} (q_4)
                                  (q_1) edge node {$a$} (q_2)
                                  (q_1) edge [bend left=30] node {$b$} (q_3)
                                  (q_2) edge [max distance=5mm,in=215,out=245,loop] node {$a$} (q_2)
                                  (q_2) edge node {$b$} (q_3)
                                  (q_3) edge [bend left=10] node {$a$} (q_5)
                                  (q_3) edge [bend left=10] node {$b$} (q_6)
                                  (q_4) edge node {$a$} (q_5)
                                  (q_4) edge [bend right=30] node [swap] {$b$} (q_6)
                                  (q_5) edge node {$a$} (q_2)
                                  (q_5) edge [bend left=10] node {$b$} (q_3)
                                  (q_6) edge node {$a$} (q_5)
                                  (q_6) edge [bend left=10] node {$b$} (q_3)
                                  (q_6) edge [loop right,draw=none] node {} (q_6);

                        \draw [->] (-.5,.5) -- (-.27,.27);

                    \end{tikzpicture}}%
                    }
                    \hspace{20pt}%
                    \subfigure[][Case $x=2$.]{%
                    \label{fig:prop:4-4:d:5:2}%
                    {\footnotesize
                    \begin{tikzpicture}[shorten >=1pt,node distance=5em,auto]

                        \node[state] (q_0) {};
                        \node[state] (q_1) [right of= q_0] {$3$};
                        \node[state] (q_2) [right of= q_1] {$1,3$};
                        \node[state] (q_3) [right of= q_2] {$1,z$};
                        \node[state] (q_4) [below of= q_1] {$z$};
                        \node[state] (q_5) [right of= q_4] {$2,3$};
                        \node[state] (q_6) [right of= q_5] {$2,z$};

                        \path[->] (q_0) edge node {$a$} (q_1)
                                  (q_0) edge node {$b$} (q_4)
                                  (q_1) edge node {$a$} (q_2)
                                  (q_1) edge [bend left=30] node {$b$} (q_3)
                                  (q_2) edge [max distance=5mm,in=215,out=245,loop] node {$a$} (q_2)
                                  (q_2) edge node {$b$} (q_3)
                                  (q_3) edge [bend left=10] node {$a$} (q_5)
                                  (q_3) edge node {$b$} (q_6)
                                  (q_4) edge node {$a$} (q_5)
                          (q_4) edge [bend right=30] node [swap] {$b$} (q_6)
                          (q_5) edge node {$a$} (q_2)
                          (q_5) edge [bend left=10] node {$b$} (q_3)
                          (q_6) edge node {$a$} (q_5)
                          (q_6) edge [loop right] node {$b$} (q_3);

                \draw [->] (-.5,.5) -- (-.27,.27);

                \end{tikzpicture}}}
                \caption[]{3MSA for the case in which condition 4. of Proposition \ref{proposition:4-4} is false and $za=2$.}%
                \label{fig:prop:4-4:d:5}%
                \end{figure}
            \end{enumerate}
      \end{enumerate}
\end{enumerate}

Conversely, we find for each automaton satisfying conditions 1.-4. a (short) $3$-compressing word.
\begin{enumerate}
  \item Let $b=[x,3]\backslash 1$ and $xa\neq 2$ or $2b\neq 3$.
      Observe that $xa\neq 1$, as $3a=1$ and $x\neq 3$.
      The P3MSA in Fig. \ref{fig:prop:4-4:r:1} proves that either the word $b^2a^2$ or $b^2ab^2$ $3$-compresses $\A$.
      \begin{figure}[ht]%
        \centering
        {\footnotesize
        \begin{tikzpicture}[shorten >=1pt,node distance=5em,auto]

            \node[state] (q_0) {};
            \node[state] (q_1) [right of= q_0] {$1$};
            \node[state] (q_2) [right of= q_1] {$1,x$};
            \node[state] (q_3) [right of= q_2] {$3,xa$};
            \node[state] (q) [right of= q_3,draw=none] {};
            \node[state,accepting] (q_4) [right of= q] {};
            \node[state] (q_5) [right of= q_4] {$1,2b$};

            \path[->] (q_0) edge node {$b$} (q_1)
                      (q_1) edge node {$b$} (q_2)
                      (q_2) edge node {$a$} (q_3)
                      (q_3) edge node {$a|xa\neq 2$} (q_4)
                      (q_3) edge [bend right=20] node [swap] {$b|xa=2,2b\neq 3$} (q_5)
                      (q_5) edge node [swap] {$b$} (q_4);

            \draw [->] (-.5,.5) -- (-.27,.27);

        \end{tikzpicture}}\\
        \caption[]{P3MSA for the case $b={[x,3]}\backslash 1$ and $xa\neq 2$ or $2b\neq 3$.}%
        \label{fig:prop:4-4:r:1}%
      \end{figure}
  \item Let $b=[x,y]\backslash 2$, $3\not\in\{x,y\}$, $1\in\{x,y\}$ and $3b\in\{x,y\}$.
      There are two subcases.
      \begin{enumerate}
        \item Let $x=1$, then $b=[1,y]\backslash 2$, $y\neq 3$, $3b=y$ (as $2b=1$) and $2a\neq 1$ (as $3a=1$).
            So $\M(b^2a)=\{2a,3\}$ and if $2a\neq 2$, then $|\M(b^2a^2)|=3$, else if $2a= 2$, then $|\M(b^2ab)|=3$, so either the word $b^2a^2$ or $b^2ab$ $3$-compresses $\A$.
        \item Let $y=1$, then $b=[x,1]\backslash 2$, $x\neq 3$, $3b=1$ (as $2b=x$) and $2a\neq 1$ (as $3a=1$).
            If $2a=2$, then the P3MSA in Fig. \ref{fig:prop:4-4:r:4:1} proves that the word $a^2b^2$ $3$-compresses $\A$.
            If $2a\neq 2$, then the P3MSA in Fig. \ref{fig:prop:4-4:r:4:2} proves that the word $a^2ba^2$ $3$-compresses $\A$.
            \begin{figure}[ht]%
                \centering
                \subfigure[][Case $2a=2$.]{%
                \label{fig:prop:4-4:r:4:1}%
                {\footnotesize
                \begin{tikzpicture}[shorten >=1pt,node distance=4.3em,auto]

                    \node[state] (q_0) {};
                    \node[state] (q_1) [right of= q_0] {$2$};
                    \node[state] (q_2) [right of= q_1] {$2,x$};
                    \node[state] (q_3) [right of= q_2] {$3,xa$};
                    \node[state,accepting] (q_4) [right of= q_3] {};

                    \path[->] (q_0) edge node {$b$} (q_1)
                              (q_1) edge node {$b$} (q_2)
                              (q_2) edge node {$a$} (q_3)
                              (q_3) edge node {$a$} (q_4)
                              (q_3) edge [bend left=10,draw=none] node {\phantom{$a|2a\neq 2$}} (q_4)
                              (q_3) edge [bend right=10,draw=none] node [swap] {\phantom{$b|2a= 2$}} (q_4);

                    \draw [->] (-.5,.5) -- (-.27,.27);

                \end{tikzpicture}}}
                \hspace{15pt}%
                \subfigure[][Case $2a\neq 2$.]{%
                \label{fig:prop:4-4:r:4:2}%
                {\footnotesize
                \begin{tikzpicture}[shorten >=1pt,node distance=4.3em,auto]

                    \node[state] (q_0) {};
                    \node[state] (q_1) [right of= q_0] {$1$};
                    \node[state] (q_2) [right of= q_1] {$1,3$};
                    \node[state] (q_3) [right of= q_2] {$1,2$};
                    \node[state] (q_4) [right of= q_3] {$3,2a$};
                    \node[state,accepting] (q_5) [right of= q_4] {};
                    \node[state] (q) [draw=none,right of= q_5] {};

                    \path[->] (q_0) edge node {$a$} (q_1)
                              (q_1) edge node {$a$} (q_2)
                              (q_2) edge node {$b$} (q_3)
                              (q_3) edge node {$a$} (q_4)
                              (q_4) edge node {$a$} (q_5);

                    \draw [->] (-.5,.5) -- (-.27,.27);

                \end{tikzpicture}}}\\
                \caption[]{P3MSA for the case $b={[x,1]}\backslash 2$, $x\neq 3$ and $3b\in\{x,1\}$.}%
                \label{fig:prop:4-4:r:4}%
            \end{figure}
      \end{enumerate}

  \item If $b=[x,3]\backslash 2$ and $xa\neq 2$ or $1b\neq 3$, then
      observe that $xa\neq 1$, as $3a=1$ and $x\neq 3$.
      If $xa\neq 2$, then $\M(b^2)=\{2,x\}$, $\M(b^2a)=\{3,xa\}$ and $|\M(b^2a^2)|=3$.
      If $xa=2$ and $1b\neq 3$, then $\M(a^2)=\{1,3\}$, $\M(a^2b)=\{1b,2\}$ and $|\M(a^2b^2)|=3$.
      Then either the word $b^2a^2$ or $a^2b^2$ $3$-compresses $\A$.
      If $b=[3,y]\backslash 2$ and $ya\neq 2$ or $1b\neq y$, then observe that $ya\neq 1$, as $3a=1$ and $y\neq 3$.
      The P3MSA in Fig. \ref{fig:prop:4-4:r:3} proves that either the word $a^2b^2$ or $a^2ba^2$ $3$-compresses $\A$.
      \begin{figure}[ht]%
        \centering
        {\footnotesize
        \begin{tikzpicture}[shorten >=1pt,node distance=5em,auto]

            \node[state] (q_0) {};
            \node[state] (q_1) [right of= q_0] {$3$};
            \node[state] (q_2) [right of= q_1] {$1,3$};
            \node[state] (q) [right of= q_2,draw=none] {};
            \node[state] (q_3) [right of= q] {$2,1b$};
            \node[state,accepting] (q_4) [right of= q_3] {};
            \node[state] (q_6) [right of= q_4] {$3,ya$};
            \node[state] (q_5) [right of= q_6] {$2,y$};

            \path[->] (q_0) edge node {$a$} (q_1)
                      (q_1) edge node {$a$} (q_2)
                      (q_2) edge node {$b|1b\neq y$} (q_3)
                      (q_3) edge node {$b$} (q_4)
                      (q_2) edge [bend right=18] node [swap] {$b|1b=y,ya\neq 2$} (q_5)
                      (q_5) edge node [swap] {$a$} (q_6)
                      (q_6) edge node [swap] {$a$} (q_4);

            \draw [->] (-.5,.5) -- (-.27,.27);

        \end{tikzpicture}}\\
        \caption[]{P3MSA for the case $b={[3,y]}\backslash 2$ and $ya\neq 2$ or $1b\neq y$.}%
        \label{fig:prop:4-4:r:3}%
      \end{figure}
  \item Let $b=[x,3]\backslash z$, $z\not\in\{1,2\}$, $x\in\{1,2\}$ and $za\in\{1,2\}$.
      Observe that $z\neq 3$ and $3a=1$, so $za\neq 1$, and then it is always $za=2$.
      We have to consider two subcases.
      \begin{enumerate}
        \item Let $x=1$, then $b=[1,3]\backslash z$, $zb=1$, $3b\neq 1$ and $\M(a^2b)=\{z,3b\}$.
            If $3b\neq 2$, then $\M(a^2ba)=\{za,3ba,3\}$ and $a^2ba$ $3$-compresses $\A$.
            Else, if $3b=2$, then $\M(a^2b^2)=\{1,z,2b\}$ and $a^2b^2$ $3$-compresses $\A$.
        \item Let $x=2$, then $b=[2,3]\backslash z$, $3b\neq 2$ (as $zb=2$) and $\M(a^2b)=\{z,1b\}$.
            If $1b\neq 3$, then $\M(a^2b^2)=\{2,1b^2,z\}$ and $a^2b^2$ $3$-compresses $\A$.
            Else, if $1b=3$, then $\M(a^2ba)=\{za,1ba,3\}$ and $a^2ba$ $3$-compresses $\A$.\end{enumerate}\end{enumerate}\end{proof}

\FloatBarrier 
\section{\texorpdfstring{Finding lower and upper bounds for $c(3,2)$}{Finding lower and upper bounds for c(3,2)}}\label{zot}

Collecting the words arising from the previous propositions, and taking into account that the roles of letters $a$ and $b$ are interchangeable, then each $3$-full word, containing as factors the words in the set $W$:\\

\begin{tabular}{ll}
      $W=$ & $\{ab^2ab^2a,\ ab^2a^2b^2a,\ abab^2aba,\ ab^3aba,\ abab^3a,\ ab^3ab^3a,\ ba^2baba^2b,\ a^2b^3a,\ ba^2ba^2b$\\
      & $\phantom{\{}ba^2b^2a^2b,\ baba^2bab,\ ba^3bab,\ baba^3b,\ ba^3ba^3b,\ ab^2abab^2a, b^2a^3b, a^2b^3a^2, b^2a^3b^2\}$
\end{tabular}
\ \\
is a $3$-collapsing word on a two letter alphabet. Remark that $a^2b^3a$ and $b^2a^3b$ are factors of $a^2b^3a^2$ and $b^2a^3b^2$ respectively, the reason for which they occur in $W$ will be clear in the sequel.

Then, in order to construct a short $3$-collasing word we need to find a word having as factor all the words above, \textit{i.e.}, to solve the Shortest Common Supersequence problem (SCS) for $W$. It is well-known that SCS is NP-complete (\cite{Raiha}) also over a two letter alphabet, thus approximation algorithm are often used. Nevertheless, the cost of finding a good approximation is comparable to the cost of finding an optimal solution, as in \cite{K} the authors prove that the problem to approximate within any constant approximation factor better than 333/332 is NP-hard. On the other hand, efficient algorithms give poor approximation, as the best one yields an approximation factor of $\approx$ 2, \textit{i.e.}, it produces a solution whose length is about twice that of the optimum (\cite{K,Turner}).

So, as no near-optimal solutions can be found in reasonable time, we decide to code the problem in the bounded satisfiability problem for a set of liner-time temporal logic (LTL) formulae, for which we developed a tool (\cite{io}).
More precisely, let $S$ be a propositional letter, a word $w$ of length $n$ is coded in the LTL formula $\mathbf{w}$ such that it is satisfied if and only if for all $1\leq i\leq n$ the $i$-th letter of $w$ is \lq\lq a\rq\rq\ if and only if at the $i$-th time instant $S$ is true. E.g., the word $aba$ is encoded in the formula $S\wedge (\mathbf{X}(\neg S \wedge \mathbf{X} (S)))$, where $\mathbf{X}$ is the \lq\lq next\rq\rq\ operator. Then we look for the shortest model that satisfy the formula $\bigwedge_{w\in W}\mathbf{w}$, obtaining that the shortest word having as factor all the words in $W$ has length 55.

However, we were well aware that such \lq\lq greedy approach\rq\rq, \emph{i.e.}, to find an optimal solution for each subcase and combining them to obtain a global solution, is not suitable in order to achieve a global optimum.

Then we observed that the words $a^2b^3a^2$ and $b^2a^3b^2$ are needed only to solve a special subcase of $(\bf{3},\bf{4})$-automata, so we try to replace them with longest factor in order to obtain a shorter $3-$collapsing word.

Actually, the shortest word having as factor the words in $W\setminus\{a^2b^3a^2,b^2a^3b^2\}$ is
\[
w_3=\ b^2a^3b\underbrace{a^3b^3aba^2}_{v}baba^2ba^2b^2a^2b^2ab^2aba\underbrace{b^2aba^3bab^3}_{u}ab^3a
\]
has as factor the word $u=b^2aba^3bab^2$ and $v=a^2b^3aba^2$. As $u$ belongs to the language $\mathcal{L}$ defined in Proposition \ref{proposition:3-4}, case $2.b.iii$, and $v$ belongs to the dual of $\mathcal{L}$, this proves that $w_3$ is a $3-$collapsing word of length 53.

It is known that in general the language $\mathcal{C}_{k,t}$
of $k$-collapsing words on an alphabet of $t$ letters differs
from the language $\mathcal{S}_{k,t}$ of $k$-synchronizing words on the same alphabet. However, this not excludes that in some cases $c(k,t)=s(k,t)$. Up to now it was only know that $c(2,2)=s(2,2)$ (\cite{SS}) and that $c(2,3)\neq s(2,3)$ (\cite{AP}).
We find a counterexample proving that $c(3,2)\neq s(3,2)$ (and so $c(3,2)\geq 34$). In fact, the automaton in Fig. \ref{automa} is 3-compressible (and also 3-synchronizing), but the word $s_{3,2}$ do not compresses it. On the other hand, its dual $\bar{s}_{3,2}$ synchronizes it. We check with a computer calculation that any 3-compressible 5-states automaton over a two letter alphabet, is 3-compressed either by $s_{3,2}$ or by $\bar{s}_{3,2}$,
so we believe interesting to understand if it is always the case for every 3-compressible automata.
\begin{figure}[ht]%
\centering
{\footnotesize
\begin{tikzpicture}[shorten >=1pt,node distance=5em,auto]

    \node[state] (q_0) {$0$};
    \node[state] (q_1) [right of= q_0] {$1$};
    \node[state] (q_2) [below of= q_0] {$2$};
    \node[state] (q_3) [right of= q_2] {$3$};
    \node[state] (q_4) [right of= q_3] {$4$};

    \path[->] (q_0) edge node [swap] {$b$} (q_3)
              (q_0) edge [loop above] node {$a$} (q_0)
              (q_1) edge node [swap] {$b$} (q_0)
              (q_1) edge [bend right=10] node [swap] {$a$} (q_3)
              (q_2) edge node {$b$} (q_0)
              (q_2) edge [bend right] node [swap] {$a$} (q_4)
              (q_3) edge [bend right=10] node [swap] {$b$} (q_1)
              (q_3) edge node {$a$} (q_2)
              (q_4) edge node [swap] {$a$} (q_1)
              (q_4) edge [loop above] node {$b$} (q_4);

\end{tikzpicture}}
\caption{A semiautomaton which is not 3-compressed by $s_{3,2}$: $Q\bar{s}_{3,2}=\{3\}$, $Qs_{3,2}=\{0,1,3\}$.}
\label{automa}
\end{figure}

\section{Conclusion}\label{conclusion}

Although very technical, our analysis can be effectively exploited in order to obtain more general results and to investigate some conjectures. In \cite{kisi2, kisi}, the authors exploit the characterization of $(\bf{3},\bf{p})$-automata (Proposition \ref{proposition:3-p}) to prove that the problem of recognizing whether a binary word is $3$-collapsing is co-\textsc{NP}-complete.

Moreover, the word $w_3$ can be used to improve the procedure arising from \cite{MVP} (Theorem 3.5) to obtain shorter $k$-collapsing words for $k\geq4$.
In particular it follows that $c(4,2)\leq 1741$ and $c(5,2)\leq 109941$.
Though very lengthy, they can be effectively used in testing the compressibility of an automaton. In particular this can accelerate the algorithm presented in \cite{AP, Pet} to find short (possibly shortest) 4- and 5-synchronizing words. 

\bibliographystyle{abbrvnat}
\bibliography{bibliografia}

\end{document}